\def\onto{\twoheadrightarrow}
\def\Z{\mathbb{Z}}
\def\ol{\overline}
\def\wh{\widehat}
\def\<{\langle}
\def\>{\rangle}
\def\G{\Gamma}
\def\D{\Delta}
\def\a{\alpha}
\def\b{\beta}
\def\g{\gamma}
\def\d{\delta}
\def\cS{\mathcal{S}}
\def\cT{\mathcal{T}}
\newtheorem{thmA}{Theorem}
\newtheorem{thm}{Theorem}[section]
\newtheorem{lem}[thm]{Lemma}
\newtheorem{cor}[thm]{Corollary}
\newtheorem{rmk}[thm]{Remark}
\def\serieslogo@{\relax}
\def\@setcopyright{\relax}
\begin{document}
	\title{ 
		Generalising Collins' Theorem }
	
	\author[Howie]{James Howie }
	\address{ James Howie\\
		Department of Mathematics and Maxwell Institute for Mathematical Sciences\\
		Heriot--Watt University\\
		Edinburgh EH14 4AS }
	\email{ j53howie@gmail.com, j.howie@hw.ac.uk}
	
	
	\author[Short]{ Hamish Short } 
	\address{ Hamish Short \\
		Institut de Mathematiques de Marseille (I2M)\\
		3 Place Victor Hugo Case 19\\
		13331 Marseille Cedex 3, France\\
		Aix Marseille Univ, CNRS, I2M}
	\email{ hamish.short@univ-amu.fr }
	
	\thanks{The first named author was supported in part by 
		Leverhulme Trust Emeritus Fellowship EM-2018-023$\backslash$9 }
	\keywords{Locally indicable, one-relator product, Magnus subgroup}
	\subjclass[2020]{Primary 20F65, 57K20. Secondary 20E06, 20F06, 57M07}
	\abstract{We generalise a result of D. J. Collins on intersections of conjugates of 
		Magnus subgroups of one-relator groups to the context of one-relator products of locally indicable groups.}}

\dedicatory{Dedicated to the memory of Donald J. Collins}
\maketitle

\section{Introduction}\label{intro}

 The basic objects of combinatorial and geometric group theory are the free groups, the next
	level of complication resulting from adding a relation to the presentation to give the class of one-relator groups.
	A natural generalisation of this is 
	the class of one-relator products, obtained by adding a single relator to a free
	product of two or more groups. Over the years many results concerning one-relator groups have been generalised to one-relator
	products, at least when the factor groups are locally indicable 
	(every non-trivial finitely generated subgroup has $\Z$ as a homomorphic image).
	A basic result on one-relator groups is Magnus' Freiheitssatz \cite{M1} which states that the subgroup generated by a subset of the generators of a one-relator group is free if at least one generator appearing in the relation is not present in the subset. 
Subgroups of this form are now known as {\em Magnus subgroups}. 
The technique introduced by Magnus in this early work is generally known as {\em Magnus induction} or {\em the Magnus hierarchy}.  
Effectively, the one-relator group can be embedded in an HNN-extension of a simpler one-relator group in which the associated subgroups are Magnus subgroups.  
It and its variants form the basis for much of the rich theory of one-relator groups that has been developed in the subsequent (almost) century. 

 We consider here one-relator products of locally indicable groups -- that is 
 groups of the form $(*_\lambda G_\lambda)/\<\<R\>\>$ where $\{G_\lambda;\lambda\in\Lambda\}$ is a family of locally indicable groups and $\<\<R\>\>$ is the normal closure in their free product of a single element $R$.   This goes back to early work of Brodski\u\i\ \cite{B80,B81} and of the authors \cite{H81,S81} and remains a productive source of new results to this day -- see for example \cite{HS}. 

	The object of this article is the generalisation to one-relator products of a result due to Don Collins concerning one-relator groups. The methods used are frequently geometric, considering finite 2--dimensional complexes naturally associated to finite group presentations.

In two articles \cite{C1,C2}, Collins gave a complete analysis of the possible intersections of Magnus subgroups and their conjugates in a one-relator group $G$ \ :

\begin{thm}[\cite{C1}]\label{Co1}
	If $M(Y),M(Z)$ are the Magnus subgroups of $G$ generated by $Y,Z\subset X$ respectively, then the intersection of $M(Y)$ and $M(Z)$ in $G$ is either the Magnus subgroup $M(Y\cap Z)$ or $M(Y\cap Z)*C$ for some cyclic group $C$.
\end{thm}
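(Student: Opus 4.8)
The plan is to induct on the length of the cyclically reduced relator $R$, using the Magnus--Moldavanskii hierarchy to realise $G$ as an HNN extension of a one-relator group whose relator is shorter, and to follow the two Magnus subgroups through each stage. A useful structural observation to record at the outset is that, by the Freiheitssatz, each of $M(Y),M(Z)$ is itself a free group; hence $M(Y)\cap M(Z)$ is free and the group $C$, if nontrivial, is necessarily infinite cyclic. The real content of the theorem is therefore that $M(Y\cap Z)$ sits inside $M(Y)\cap M(Z)$ as a free factor whose complement has rank at most one.

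First I would make the standard reductions: assume $R$ is cyclically reduced, and discard any generator lying outside $Y\cup Z$ that does not occur in $R$, since such a generator is a free factor irrelevant to the intersection. Since each Magnus subgroup must omit a generator occurring in $R$, both $Y$ and $Z$ are proper. The base case is when $R$ involves a single generator $x$, so that $G\cong(\Z/n)*F$ with $F$ free on $X\setminus\{x\}$ and $n\ge1$; here $Y,Z\subseteq X\setminus\{x\}$, the subgroups $M(Y)=F(Y)$ and $M(Z)=F(Z)$ are free factors of $F$ cut out by subsets of a basis, and $F(Y)\cap F(Z)=F(Y\cap Z)=M(Y\cap Z)$ directly, with no extra factor.

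For the inductive step, choose a generator $t$ occurring in $R$. If $t$ has exponent sum zero in $R$, the substitution $x_{i,j}=t^{-j}x_it^j$ rewrites $R$ as a cyclically reduced word $R'$ of strictly smaller length, and exhibits $G=\<G',t\mid tAt^{-1}=B\>$ as an HNN extension of the one-relator group $G'=\<x_{i,j}\mid R'\>$, in which $A$ and $B$ are the Magnus subgroups of $G'$ obtained by deleting the top, respectively bottom, index generators. If no generator of $R$ has zero exponent sum, I would first pass to a one-relator overgroup in which this holds, via a standard change of generators respecting Magnus subgroups, and argue there. The computation of $M(Y)\cap M(Z)$ is then organised by whether $t\in Y$ and whether $t\in Z$. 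If $t$ lies in neither, both Magnus subgroups lie in $G'$ and the intersection, taken in $G$, coincides with the intersection taken in $G'$, to which the inductive hypothesis applies verbatim. If $t$ lies in exactly one of them, Britton's Lemma forces any common element to have $t$-length zero, again pushing the problem into $G'$.

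The step I expect to be the main obstacle is the remaining case $t\in Y\cap Z$, where $M(Y)$ and $M(Z)$ are both HNN subextensions of $G$ containing the stable letter. Here I would analyse a common element through its reduced $t$-syllable form, using Britton's Lemma (equivalently, the action on the Bass--Serre tree) to show that each pinch lands in $A$ or $B$ and is controlled, via induction applied to the base intersection in $G'$, by a smaller Magnus subgroup. The delicate point is to bound the complementary free factor: one must verify both that the rewriting $x_i\mapsto x_{i,0}$ carries $M(Y\cap Z)$ faithfully onto the expected Magnus subgroup of $G'$, and that the stable letter together with the inductive complement can contribute at most one free generator in excess of $M(Y\cap Z)$, so that $C$ remains infinite cyclic rather than free of higher rank. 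Managing this rank bound uniformly along the hierarchy, and confirming that the putative extra generator genuinely splits off as a free factor, is where the substance of the proof lies.
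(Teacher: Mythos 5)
First, a point of orientation: the paper does not prove this statement at all. Theorem \ref{Co1} is quoted as background from Collins \cite{C1}, and the paper's own contribution (Theorems \ref{main} and \ref{maintorsion}, proved via pictures over $2$-complexes and right-orderability of $\ol{G}$) generalises the \emph{conjugate} version, Theorem \ref{Co2}; the analogue of Theorem \ref{Co1} for one-relator products was proved in \cite{H05}, again by picture methods rather than the Magnus hierarchy. So your proposal must be judged against Collins' original argument, whose overall strategy (induction along the Magnus--Moldavanskii hierarchy) you have correctly identified. Your reductions, base case, and case split on whether $t$ lies in $Y$, $Z$, both or neither are the standard and essentially sound skeleton, with two technical points needing repair: (a) when no generator of $R$ has exponent sum zero, the usual embedding $x\mapsto ut^{-\beta}$, $y\mapsto t^{\alpha}$ does \emph{not} carry Magnus subgroups to Magnus subgroups of the overgroup (the image contains $t^{\alpha}$ rather than $t$), so ``argue there'' conceals a genuine pull-back argument; and (b) in the mixed case, $M(Y)\cap G'$ is not the zero-level subgroup but the subgroup generated by all the shifted generators $x_{i,j}$, $j\in\mathbb{Z}$, an infinitely generated Magnus subgroup of $G'$ --- harmless only because the theorem is stated for arbitrary subsets, which your induction must therefore explicitly accommodate.

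The genuine gap is the case $t\in Y\cap Z$, which you yourself flag as ``where the substance of the proof lies'' --- and this is precisely the only case in which the exceptional factor $C$ can arise, i.e.\ it \emph{is} the theorem. In that case $M(Y)\cap M(Z)$ is assembled, via Britton reduction, from the level intersections of the corresponding (shifted) Magnus subgroups of $G'$, and the inductive hypothesis allows \emph{each} level to contribute its own cyclic free factor $C_j$; conjugation by the stable letter relates adjacent levels, and nothing in your outline shows that these exceptional contributions coalesce into a single infinite cyclic factor rather than producing a free complement of unbounded rank, nor that the putative extra generator splits off as a free factor at all. Your closing assertion that ``the stable letter together with the inductive complement can contribute at most one free generator in excess of $M(Y\cap Z)$'' restates the conclusion rather than proving it. Controlling exactly this --- Collins' detailed analysis of \emph{exceptional} intersections and of how exceptional elements at consecutive levels interact under the stable letter --- occupies the bulk of \cite{C1}, and its failure to propagate through the hierarchy in the more general setting is exactly why \cite{H05} and the present paper work instead with pictures, Weinbaum's theorem and a right order on $\ol{G}$. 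As it stands, then, your proposal is a correct plan with its central step missing.
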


\begin{thm}[\cite{C2}]\label{Co2}
	If $g\in G$ and the intersection of $M(Y)$ and $g^{-1}M(Z)g$ in $G$ is not cyclic, then there are elements $h\in M(Z)$ and $k\in M(Y)$ such that $g=hk$ in $G$.
\end{thm}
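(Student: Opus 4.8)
The plan is to recast the statement as a claim about the double coset $M(Z)\,g\,M(Y)$. The intersection $M(Y)\cap g^{-1}M(Z)g$ depends only on this double coset, and the desired factorisation $g=hk$ with $h\in M(Z)$, $k\in M(Y)$ says precisely that the double coset is the trivial one $M(Z)M(Y)$. So, choosing $g$ of minimal length in its double coset, I would prove the contrapositive: if $g\notin M(Z)M(Y)$, then $M(Y)\cap g^{-1}M(Z)g$ is cyclic. By the Freiheitssatz each Magnus subgroup is free on its generators, so this intersection is a free group and \emph{non-cyclic} means it contains a free subgroup of rank two. It therefore suffices to show that any two elements $a,b$ generating such a subgroup yield a factorisation $g=hk$.

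Fix such $a,b$, with $a,b$ reduced words in $Y$ and $a'=gag^{-1}$, $b'=gbg^{-1}$ reduced words in $Z$. Each relation $g\,a\,g^{-1}=a'$ is witnessed by a reduced annular van Kampen diagram (dually, a picture) over $\<X\mid R\>$ whose inner boundary reads $a$, whose outer boundary reads $a'$, and which carries a cut arc labelled $g$; likewise for $b,b'$. After reducing, I would use that $Y$ and $Z$ each omit a generator occurring in $R$: by the Freiheitssatz and Weinbaum's analysis of reduced diagrams over one-relator groups, the $R$-cells abutting a boundary circle whose label misses a generator of $R$ are severely constrained, so the cut arc can cross only a controlled band of cells. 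Such a band represents, via the Magnus--Moldavanskii splitting of $G$ as an HNN extension over Magnus subgroups, a sequence of crossings of the associated edge groups. Here the rank-two hypothesis is decisive: a cyclic subgroup can be carried across such a band, since it may lie inside an edge group and be transported by the edge isomorphism, but a rank-two free subgroup surviving the crossing would force the intersection of two Magnus subgroups in a \emph{simpler} one-relator group to be non-cyclic, which Theorem~\ref{Co1} together with the inductive hypothesis controls. This should force the band to be inessential, so that $g$ freely reduces to a word in $Z$ followed by a word in $Y$, that is, $g\in M(Z)M(Y)$.

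The hard part will be the combinatorial control of these reduced annular diagrams in exactly the two degenerate regimes already visible in Theorem~\ref{Co1}: when $R$ is a proper power, so that $G$ has torsion and the diagrams admit exceptional cell configurations governed by Newman's spelling theorem rather than Weinbaum's; and when $M(Y)\cap M(Z)$ carries the extra cyclic free factor $C$, where the ``no essential band'' conclusion can fail for a single relation. In both regimes the argument must genuinely use the second, independent generator to exclude a persistent band, and the cleanest way to make the ``rank two defeats one band'' heuristic precise seems to be an induction along the Magnus hierarchy, descending to a one-relator group on fewer generators where Theorem~\ref{Co1} and the inductive hypothesis apply. Setting up that induction so that non-cyclicity of the intersection is preserved at each descent, and so that the edge-group intersections to which Theorem~\ref{Co1} is applied remain non-cyclic, is, I expect, the delicate technical point; for the locally indicable generalisation the Moldavanskii hierarchy is unavailable, and this inductive step would have to be replaced by a direct tower or picture argument.
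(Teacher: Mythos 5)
Your proposal has a genuine gap at its central step, and the gap sits precisely where the whole difficulty of the theorem lives. You claim that a rank-two free subgroup ``surviving the crossing'' of a band ``would force the intersection of two Magnus subgroups in a simpler one-relator group to be non-cyclic, which Theorem~\ref{Co1} together with the inductive hypothesis controls,'' and that this ``should force the band to be inessential.'' But Theorem~\ref{Co1} controls nothing here: it says an intersection of Magnus subgroups is $M(Y\cap Z)$ or $M(Y\cap Z)*C$, which is typically very far from cyclic, so non-cyclicity of an intersection lower down the hierarchy is not a contradiction and gives you nothing to push against. What the inductive hypothesis (Theorem~\ref{Co2} for a shorter relator) yields is a factorisation of a conjugating element in the smaller group, and you have not explained how such a factorisation recombines, through the Magnus--Moldavanskii embedding of $G$ into an HNN extension of a group with shorter relator --- where $M(Y)$ and $M(Z)$ need not correspond to Magnus subgroups of the base and the conjugator acquires stable letters --- into the desired factorisation $g=hk$ in $G$ itself. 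You acknowledge this yourself (``setting up that induction \dots is, I expect, the delicate technical point''), but an acknowledged missing induction is still a missing proof: everything before it in your outline is a standard reduction (double cosets, passing to a rank-two subgroup, reduced annular diagrams), and everything after it is the theorem.

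It is also worth recording that the route you sketch is not the one the paper takes. The paper obtains Theorem~\ref{Co2} as the special case of Theorem~\ref{main} in which every factor is infinite cyclic, and its proof deliberately avoids the Magnus hierarchy, which (as you note at the end) does not exist for locally indicable factors. Instead it reformulates the statement over a $2$-complex $Y=X\cup\mathcal{S}\cup\mathcal{T}\cup\alpha$ (Theorem~\ref{2ComplexVersion}), reduces to a rank-two free subgroup $K$ of the commutator subgroup, and works with relative pictures. The two tools replacing your band-and-hierarchy analysis are: (i) a right ordering on the locally indicable group $\overline{G}$ (Theorem~\ref{lilo}), used to single out the $u_{min}$ and $u_{max}$ arcs at each $\alpha$-disc and to prove Lemma~\ref{l:1}, which supplies two extremal discs attached to the boundary by consecutive arcs; and (ii) an induction on the complexity $c(Y)=k-\vert\pi_0(X)\vert$, carried out by lifting pictures to a suitable $\mathbb{Z}$-cover and invoking Brodsky's theorem (Theorems~\ref{BrodskyLemma}, \ref{Br} and \ref{itBr}) together with the up-down connection analysis of \S\S\ref{restrict}--\ref{finalcurtain}. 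If you wish to salvage your outline for the one-relator case alone, the work remaining is exactly the construction you deferred: a hierarchy descent in which both the non-cyclicity hypothesis and the double-coset conclusion are tracked through each HNN splitting --- essentially Collins' original argument --- and that tracking, not the diagram formalism, is the substance of the proof.
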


Note that, in the conclusion of Theorem \ref{Co2} we have $h^{-1}M(Z)h=M(Z)$ and $k^{-1}M(Y)k=M(Y)$, so $M(Y)\cap g^{-1}M(Z)g=k^{-1}(M(Y)\cap M(Z))k$ and Theorem \ref{Co1} applies.

 Since Magnus subgroups are free, in Theorems \ref{Co1} and \ref{Co2} ``cyclic'' means either infinite cyclic or trivial.  Indeed in most cases the cyclic subgroup in these theorems turns out to be trivial. (For similar reasons the cyclic subgroups in other analogous results in this paper will be either infinite cyclic or trivial.)  

In the case of a one-relator group with torsion, the above results can be strengthened \cite{C1}: The second case $M(Y\cap Z)*C$ in Theorem \ref{Co1} cannot occur, and in Theorem \ref{Co2} one can replace ``cyclic'' by ``trivial''.
 
The importance of Collins' results has 
been highlighted in a number of recent advances by Linton on one-relator groups (cf. for example \cite{L1,L2} or
 the survey in \cite[Chapter 2]{LN}).  
A key issue in this work is the intersections of edge-stabilisers in the Bass-Serre tree of the HNN extension in the Magnus hierarchy -- which are of course conjugates of the Magnus subgroups of the vertex stabilisers.

In \cite{H05}, the first-named author generalised Theorem \ref{Co1} -- and also the stronger version in the torsion case -- to the situation of one-relator products of locally indicable groups.
The present paper provides analogous generalisations of both versions of Theorem \ref{Co2}.  Specifically, we prove the following two results.

\begin{thmA}\label{main}
	Let $\{G_\lambda;\lambda\in\Lambda\}$ be a collection of locally indicable groups, let $R\in \ast_{\lambda\in\Lambda} G_\lambda$ be a cyclically reduced word of free product length at least $2$, and let $G:=(\ast_{\lambda\in\Lambda} G_\lambda)/\<\<R\>\>$, where $\<\<R\>\>$ denotes the normal closure of $R$.  Let $I,J$ be subsets of $\Lambda$ and $g\in \ast_{\lambda\in\Lambda} G_\lambda$.  If the intersection of $\ast_{i\in I} G_i$ and
	$g^{-1}\cdot\left(\ast_{j\in J} G_j\right)\cdot g$
	in $G$ is not cyclic, then there are elements $h\in \ast_{j\in J} G_j$ and $k\in \ast_{i\in I} G_i$ such that $g=hk$ in $G$.
\end{thmA}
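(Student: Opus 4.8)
The plan is to argue by exploiting the invariance of the statement under double-coset moves, to convert the hypothesis into a system of pictures over the one-relator product, and then to simplify these by a tower argument. Write $A:=\ast_{i\in I}G_i$, $B:=\ast_{j\in J}G_j$ and $F:=\ast_{\lambda\in\Lambda}G_\lambda$, so that $G=F/\<\<R\>\>$. First I would record two elementary invariances: if $g$ is replaced by $vgu$ with $v\in B$ and $u\in A$, then the desired conclusion $g\in BA$ (in $G$) is unchanged, since $vgu=(vh)(ku)$ whenever $g=hk$; and the subgroup $A\cap g^{-1}Bg$ is merely conjugated inside $A$ by an element of $A$ (using $v^{-1}Bv=B$ and $uAu^{-1}=A$), so its (non)cyclicity is preserved. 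This lets me assume that $g$ is of minimal free-product length among all representatives of its double coset $BgA$ in $F$, and reduces the target to showing that this minimal length is $0$.

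Next I would unpack the hypothesis. Since $A\cap g^{-1}Bg$ is not cyclic, its image in $G$ contains two elements generating a non-cyclic subgroup; lifting, there are words $u_1,u_2\in A$ and $v_1,v_2\in B$ with $g u_m g^{-1}=_G v_m$ for $m=1,2$ and with $\<\ol{u_1},\ol{u_2}\>$ non-cyclic in $G$. Each identity $g u_m g^{-1} v_m^{-1}\in\<\<R\>\>$ is witnessed by a reduced annular picture $P_m$ over the relative presentation $\<F\mid R\>$, whose two boundary arcs are labelled by the $A$-word $u_m$ and the $B$-word $v_m$ and are joined along two copies of $g$. Amalgamating $P_1$ and $P_2$ along the common $g$-arcs produces a single connected reduced picture $P$ carrying all the data, which I would take, together with $g$, to be of minimal complexity.

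The core of the argument is a tower reduction of $P$. Using local indicability of the $G_\lambda$, the pair consisting of the support of $P$ and the presentation complex admits a tower lift: at each storey one extracts a non-trivial homomorphism to $\Z$, passes to the corresponding infinite cyclic cover, and reduces either the structure of $R$ or the number of discs of $P$, all the while keeping the Magnus subgroups $A,B$ embedded by the Freiheitssatz and keeping the two conjugacy relations intact. This is the mechanism by which \cite{H05} reduced the intersection problem for $\ast_{I}G_i\cap\ast_{J}G_j$ to a base case, and I would run the analogous induction here, invoking the generalisation of Theorem \ref{Co1} from \cite{H05} wherever a non-conjugated intersection of Magnus subgroups must be identified. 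At the foot of the tower the factors are free and $R$ is a single defining word, so the situation is governed by the classical one-relator case, namely Collins' Theorem \ref{Co2}, which places the reduced conjugator in the trivial double coset; transporting this back up the tower gives $g\in BA$ in $G$, as required.

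The main obstacle I anticipate is not the tower machinery itself but the bookkeeping that keeps the \emph{two} conjugacy relations genuinely independent throughout the reduction: a single relation $gug^{-1}=v$ can persist vacuously, with $\ol u$ collapsing into a cyclic centraliser or into the cyclic group $C$ produced by Theorem \ref{Co1}, so the delicate point is to show that non-cyclicity of $\<\ol{u_1},\ol{u_2}\>$ survives each cover and is not destroyed when discs are removed or when $R$ degenerates to a proper power. Controlling this—by tracking the image of $\<u_1,u_2\>$ in the successive quotients and ruling out that it falls inside a conjugate of a single $G_\lambda$ or of such a $C$—is where the real work lies, and it is precisely the step that forces the hypothesis \emph{non-cyclic} rather than merely \emph{non-trivial} in the statement.
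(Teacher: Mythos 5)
There is a genuine gap, and it sits at the foot of your tower. You claim that after iterated passage to infinite cyclic covers ``the factors are free and $R$ is a single defining word,'' so that the classical Collins theorem (Theorem \ref{Co2}) governs the base case. But passing to a $\Z$-cover of the relative presentation complex replaces each factor $G_\lambda$ by copies or $\Z$-covers of itself: local indicability supplies the epimorphisms to $\Z$ needed to build the covers, but it never converts an arbitrary locally indicable factor into a free group, so the induction cannot terminate in the classical one-relator setting. Since Theorem \ref{main} strictly generalises Theorem \ref{Co2}, invoking the latter as base case is essentially circular. The paper instead encodes the one-relator product as a $2$-complex $Y=X\cup\cS\cup\cT\cup\alpha$ and inducts on the complexity $c(Y)=k-\vert\pi_0(X)\vert$; the base case $c(Y)=0$ is the configuration where each $\cS\cup\cT$-edge occurs exactly once in $\partial\alpha$ (so no arcs can join $\alpha$-discs in a reduced picture), and the special cases that genuinely anchor the argument are Brodsky's theorems (Theorems \ref{BrodskyLemma}, \ref{BrodskyTorsion}) on intersections $G_\lambda\cap g^{-1}G_\mu g$, used both in the initial reductions and in an iterated form (Lemma \ref{itBr}) -- none of which appears in your outline. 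Relatedly, you should reduce not merely to two elements generating a non-cyclic subgroup but (via Kurosh plus Brodsky) to $U_1,U_2$ generating a rank-$2$ free subgroup of the commutator subgroup; this is what guarantees both relations lie in $\ker\Psi$ and lift to every $\Z$-cover, which your proposal leaves unaddressed.

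Second, ``transporting back up the tower'' is precisely where the real work of this paper lies, and your sketch offers no mechanism for it. In the cover the lifted pictures are in general relative to \emph{several} lifts of $\alpha$, and the proof bifurcates: when a single lift $\alpha_m$ carries all the $\wh\cS$-edges of the Stallings graph of $\<U_1,U_2\>$, the iterated Brodsky lemma pushes $Q$ into a subcomplex containing only one lift of $\alpha$, where the complexity induction applies; when no such lift exists, no smaller complex is available and an entirely different argument is required. There the paper chooses a right ordering of the locally indicable quotient $\ol{G}$ (Theorems \ref{Gli} and \ref{lilo}) dominated by $\Psi$, uses it to single out $u_{min}$ and $u_{max}$ arcs and to prove the structure lemmas on up-down connections (Lemmas \ref{l:1}, \ref{surlepont}, \ref{l:3}, \ref{l:2}), pins the conjugator down to the side-word $W_1$ of $\G_{min}$, and closes with a minimality-and-counting contradiction on the quantity $\ell(P)$ for pictures of the form $\G_{min}+\D+\G_{max}$. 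Your anticipated difficulty (keeping the two conjugacy relations independent) is real but is handled by the commutator-subgroup normalisation above; the difficulty your proposal does not survive is that the tower, as described, has neither a legitimate base case nor an inductive step that deals with multiple lifts of the relator disc.
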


\begin{thmA}\label{maintorsion}
	Let $G$ be as in Theorem \ref{main} with $R=Z^m$ in $\ast_{\lambda\in\Lambda} G_\lambda$ for some $Z\in \ast_{\lambda\in\Lambda} G_\lambda$ and some $m>1$. If the intersection of $\ast_{i\in I} G_i$ and 
	$g^{-1}\cdot\left(\ast_{j\in J} G_j\right)\cdot g$
	in $G$ is not trivial, then there are elements $h\in \ast_{j\in J} G_j$ and $k\in \ast_{i\in I} G_i$ such that $g=hk$ in $G$.
\end{thmA}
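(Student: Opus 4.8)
The plan is to deduce Theorem B (the torsion case) from Theorem A together with the strengthened version of the intersection theorem from \cite{H05} that is available under the hypothesis $R=Z^m$. I would begin by observing that the hypothesis of Theorem B is strictly stronger than that of Theorem A: if the intersection of $\ast_{i\in I}G_i$ and $g^{-1}\cdot(\ast_{j\in J}G_j)\cdot g$ is nontrivial, then \emph{a fortiori} I must rule out the possibility that it is a nontrivial cyclic group in order to invoke Theorem A directly. So the real content is to show that, in the torsion setting, a nontrivial intersection is already enough to force the factorisation $g=hk$, even when the intersection happens to be (nontrivial) cyclic.

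The first key step is to reduce to the case where the intersection is noncyclic, so that Theorem A applies verbatim. I would argue this by contradiction: suppose the intersection $D:=\ast_{i\in I}G_i\cap g^{-1}(\ast_{j\in J}G_j)g$ is nontrivial but cyclic, say generated by an element $d$ of infinite or finite order. The presence of the relator $R=Z^m$ with $m>1$ means $G$ has torsion, and the strengthened form of Collins' first theorem in the torsion case (from \cite{H05}) tells us that intersections of the two Magnus-type subgroups $\ast_{i\in I}G_i$ and $\ast_{j\in J}G_j$ themselves are as small as possible --- no free factor $C$ appears. The standard tool here is the analogue of the Newman-type ``Spelling Theorem'' or the Magnus--Freiheitssatz machinery for one-relator products of locally indicable groups, which controls which elements of the free product can become equal modulo $\<\<Z^m\>\>$: an element lying in two factors and conjugate into a third is heavily constrained.

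The technical heart, and the step I expect to be the main obstacle, is analysing the cyclic intersection element $d$ together with the conjugating element $g$ using the geometry of pictures (or equivalently relative diagrams) over the one-relator product. I would set up a reduced picture on a disc or annulus realising the equation expressing that $d\in\ast_{i\in I}G_i$ equals $g^{-1}d'g$ for some $d'\in\ast_{j\in J}G_j$, and exploit that the relator is a proper power $Z^m$. The proper-power hypothesis produces extra cancellation/identification among the discs of the picture (this is exactly the mechanism that upgrades ``cyclic'' to ``trivial'' in Collins' torsion results), forcing any nontrivial $d$ to in fact generate a larger, hence noncyclic, intersection --- or directly forcing the conjugator $g$ into the required double-coset form $\left(\ast_{j\in J}G_j\right)\cdot\left(\ast_{i\in I}G_i\right)$. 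The locally indicable hypothesis is what guarantees the relevant diagrams/pictures are \emph{reduced} and satisfy a small-cancellation-like curvature bound (via the weight test of \cite{H05}), so that the combinatorial argument goes through.

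Finally I would assemble the pieces: once the cyclic case is excluded (shown to be impossible or to collapse into either the trivial or the noncyclic case), every nontrivial intersection is noncyclic, Theorem A yields $g=hk$ with $h\in\ast_{j\in J}G_j$ and $k\in\ast_{i\in I}G_i$, and the proof is complete. Throughout, I would lean on the observation already made after Theorem \ref{Co2} in the excerpt, namely that such a factorisation normalises the two subgroups and reduces the intersection computation to that of $\ast_{i\in I}G_i\cap\ast_{j\in J}G_j=\ast_{i\in I\cap J}G_i$; combined with the torsion strengthening of \cite{H05}, this confirms consistency of the conclusion with the nontriviality hypothesis.
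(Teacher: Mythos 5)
There is a genuine gap, and it sits exactly where the theorem's content lies. Your reduction is fine as far as it goes: when the intersection is non-cyclic, Theorem \ref{main} applies verbatim, so the whole problem is the case of a non-trivial \emph{cyclic} intersection. But for that case you offer only a dichotomy --- either the proper power ``forces any nontrivial $d$ to generate a larger, hence noncyclic, intersection'' or it ``directly forces the conjugator $g$ into the required double-coset form'' --- and neither branch is argued; the second branch is simply a restatement of the theorem. Worse, the first branch is false: non-trivial cyclic intersections genuinely occur in the torsion setting. Take $G_1=\langle a\rangle$, $G_2=\langle b\rangle$, $G_3=\langle c\rangle$ infinite cyclic, $I=\{1,2\}$, $J=\{2,3\}$, $R=Z^m$ a proper power involving all three factors, and $g=1$: by the torsion strengthening of Theorem \ref{Co1} from \cite{H05}, the intersection is exactly $G_2$, which is non-trivial cyclic. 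So the cyclic case cannot be excluded or ``collapsed''; it must be handled by a direct argument, which your proposal does not contain. (Also, your appeal to a ``weight test'' and to local indicability guaranteeing reducedness misdescribes the available machinery; the relevant tool in this paper is right-orderability of $\overline{G}=\bigl(F*(\ast_\lambda G_\lambda)\bigr)/\langle\langle Z\rangle\rangle$, via Theorems \ref{Gli} and \ref{lilo}, used to single out $u_{min}$ and $u_{max}$ arcs on discs of a picture.)

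For comparison, the paper's proof runs in the opposite logical direction: Theorem \ref{maintorsion} is proved first, directly and independently of Theorem \ref{main}, in \S\ref{torsion}. After reformulating as Theorem \ref{2ComplexVersion} for $2$-complexes, one takes a reduced annular picture for $U=QVQ^{-1}$, chooses a transversal path $\gamma$ between the two boundary circles meeting the picture minimally, and replaces $U,V$ by high powers $U^N,V^N$ so that both boundary components carry many arcs. Lemma \ref{l:1} (which rests on the chosen right order) produces an $\alpha$-disc $\beta_1$ attached to the boundary by a \emph{consecutive} sequence of arcs containing all its $u_{max}$ (or all its $u_{min}$) arcs; since $m\ge 2$, this sequence spans at least $(m-1)SL(Z)$ full syllables of $R$, hence at least $(m-1)k$ arcs cross one side of the cut rectangle, and rerouting $\gamma$ around $\beta_1$ strictly decreases the number of transverse intersections --- contradicting minimality. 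Note how the hypothesis $m>1$ enters quantitatively in the count $(m-1)k > k-|\cS\cup\cT|$; this is the mechanism that makes ``non-trivial'' suffice in the torsion case, and it is precisely the ingredient your proposal would need to supply for the cyclic case.
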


Our proofs will make extensive use of the following special case of Theorem \ref{main} due to Brodski\u\i\ \cite[Theorem 6]{B81} (see also \cite[Theorem F]{HS}).

\begin{thm}\label{BrodskyLemma}
	Let $G$ be as in Theorem \ref{main}.  If $g\in G$ and $\lambda,\mu\in\Lambda$ are such that the intersection in $G$ of $G_\lambda$ and $g^{-1}G_\mu g$ is not cyclic, then $\lambda=\mu$ and $g\in G_\lambda$.
\end{thm}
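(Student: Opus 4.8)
The plan is to argue that a non-cyclic intersection is incompatible with the conjugator $g$ genuinely crossing any relator disc, so that the defining relations effectively hold already in the free product $F:=\ast_{\lambda\in\Lambda}G_\lambda$, where the factors are malnormal.

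First I would reformulate the hypothesis. By the Freiheitssatz for one-relator products of locally indicable groups (Brodsky, Howie), each $G_\lambda$ embeds in $G$, so the intersection $G_\lambda\cap g^{-1}G_\mu g$ is an honest subgroup of $G_\lambda$. Since it is not cyclic I may pick $c_1,c_2\in G_\lambda$ generating a non-cyclic subgroup with $d_i:=gc_ig^{-1}\in G_\mu$ for $i=1,2$. Each word $gc_ig^{-1}d_i^{-1}$ then lies in the normal closure $\langle\langle R\rangle\rangle$, and I take $g$ in free-product-reduced form. The point is that the statement is immediate once we know the relations $gc_ig^{-1}=d_i$ hold in $F$ itself: the factors are malnormal in a free product, so a nontrivial subgroup of $G_\lambda$ that is conjugate into $G_\mu$ forces $\lambda=\mu$ and $g\in G_\lambda$. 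Thus the entire difficulty is to remove the relator.

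To do this I would pass to reduced pictures (dually, reduced van Kampen diagrams) over the presentation $\langle F\mid R\rangle$. The two relations assemble into a single connected reduced picture $\Gamma$ on a disc whose boundary records the reduced expression for $g$ together with $c_1,c_2,d_1,d_2$. The engine of the proof is the structure theory of reduced pictures over locally indicable one-relator products: such pictures are efficient, in the sense that a combinatorial Gauss--Bonnet (curvature) count forces most of the positive curvature onto the boundary and shows that interior relator discs must have large valence and be tightly constrained by their neighbours. I would run this curvature bookkeeping --- equivalently, an induction on the free-product length and complexity of $R$ via the standard tower argument for locally indicable groups --- to conclude that $\Gamma$ contains no relator disc at all.

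The crux, and the step I expect to be the main obstacle, is precisely this elimination of relator discs, and it is here that non-cyclicity must be used in an essential way. A single conjugacy relation is not enough: it can leave a genuinely cyclic intersection (this is the source of the exceptional free factor $C$ in the untwisted Collins/Howie results), so any argument forbidding relator discs must exploit both $c_1$ and $c_2$ simultaneously. I would therefore show that the second, independent relation rigidifies $\Gamma$ enough that any interior relator disc is forced to cancel against a neighbouring disc, contradicting reducedness; the hypothesis that $R$ is cyclically reduced of free-product length at least $2$ is what makes this cancellation analysis go through. Once $\Gamma$ has no relator discs, the relations $gc_ig^{-1}=d_i$ hold in $F$, and the malnormality argument of the first paragraph yields $\lambda=\mu$ and $g\in G_\lambda$.
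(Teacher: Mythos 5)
There is a genuine gap, and it sits exactly where you flag the main obstacle: the elimination of all relator discs. First, note that the paper does not prove this statement at all --- it quotes it from Brodsky \cite[Theorem 6]{B81} (see also \cite[Theorem F]{HS}), so there is no internal proof to match; your attempt must stand on its own. It does not, because your intermediate goal is actually false. The conclusion $g\in G_\lambda$ is an equality \emph{in $G$}, not in the free product $F$: if you choose a preimage word for $g$ in $F$, the relations $gc_ig^{-1}=d_i$ will in general fail in $F$ even when the theorem holds. Concretely, take $g=wk$ with $k\in G_\lambda$ and $w\in\<\<R\>\>\setminus\{1\}$ in $F$, and set $d_i:=kc_ik^{-1}\in G_\lambda$; then $g^{-1}G_\lambda g\cap G_\lambda$ is all of $G_\lambda$ in $G$ (so the hypothesis is satisfied whenever $G_\lambda$ is non-cyclic), yet $gc_ig^{-1}d_i^{-1}$ is a nontrivial word of $F$ lying in $\<\<R\>\>$, so \emph{every} picture for it --- reduced or not --- must contain $\alpha$-discs. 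Hence no curvature count, however clever, can show that the reduced picture $\Gamma$ is disc-free; a correct proof must instead control the structure of pictures that do contain discs and show that the conjugator can be modified, modulo elements of the factors, to remove them. (This is precisely the flavour of the machinery the paper builds for its generalisations: $\Z$-covers, right orderings, and $u_{\min}/u_{\max}$ arcs, cf.\ \S\ref{prelims}; Brodsky's original argument likewise runs an induction exploiting local indicability rather than a Gauss--Bonnet count, which is characteristic of high-power relators, not the torsion-free case.)

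Two smaller points. Your first paragraph is fine: by the Freiheitssatz the factors embed, and malnormality of factors in a free product does give the conclusion \emph{once} the conjugacy is known to hold in $F$ --- indeed there non-triviality already suffices, which is consistent with the fact that non-cyclicity is only needed to handle the relator. But your appeal to ``the standard tower argument'' as an equivalent route is an appeal to exactly the missing content: if you intend to reprove Brodsky's theorem that way, the induction on complexity with lifted pictures is the whole proof, and the sentence ``the second relation rigidifies $\Gamma$ enough that any interior relator disc is forced to cancel'' is an assertion of the theorem, not an argument for it. As written, the proposal establishes only the (easy) reduction to the free product and leaves the substantive step unproved and, in the form stated, unprovable.
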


We also require the following stronger version when the relator is a proper power.  See \cite[Corollary 3.7]{HS}.

\begin{thm}\label{BrodskyTorsion}
	Let $G$ be as in Theorem \ref{maintorsion}.  If $g\in G$ and $\lambda,\mu\in\Lambda$ are such that the intersection in $G$ of $G_\lambda$ and $g^{-1}G_\mu g$ is not trivial, then $\lambda=\mu$ and $g\in G_\lambda$.
\end{thm}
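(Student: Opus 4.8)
The plan is to reduce to Theorem \ref{BrodskyLemma} and then treat the one remaining case by a length-reduction induction that exploits the proper-power hypothesis. First I would record that each $G_\lambda$ is locally indicable, hence torsion-free, and embeds in $G$; so a nontrivial element $x$ of the intersection $G_\lambda\cap g^{-1}G_\mu g$ has infinite order. Writing $x=g^{-1}yg$ in $G$ with $1\ne x\in G_\lambda$ and $1\ne y\in G_\mu$, if the intersection is non-cyclic then Theorem \ref{BrodskyLemma} already yields $\lambda=\mu$ and $g\in G_\lambda$. Hence the whole content lies in the case where the intersection is infinite cyclic, and the proper-power hypothesis $R=Z^{m}$ (with $m\ge 2$) must be used to upgrade Brodsky's ``non-cyclic'' to ``nontrivial''.

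Next I would set up a picture. Let $F:=\ast_{\lambda\in\Lambda}G_\lambda$ and lift $g$ to a word $\bar g\in F$ of minimal free-product length among all lifts of $g$; the relation $x=g^{-1}yg$ becomes $W:=\bar g^{-1}\,y\,\bar g\,x^{-1}\in\ker(F\to G)$. If $W=1$ already in $F$, then $\bar g$ conjugates $y$ to $x$ inside $F$; by the conjugacy theorem for free products this forces $\lambda=\mu$, and since the centraliser in $F$ of a nontrivial element of a free factor lies in that factor, $\bar g\in G_\lambda$, so $g\in G_\lambda$, as required. Otherwise $W\ne 1$ in $F$, and I would represent it by a reduced picture $\Gamma$ over the relative presentation $\langle F\mid Z^{m}\rangle$, chosen with the fewest discs.

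The heart of the argument is the proper-power spelling phenomenon. Because $m\ge 2$, a combinatorial Gauss--Bonnet (curvature) count on a minimal picture $\Gamma$ --- with each disc, carrying the label $Z^{\pm m}$, subdivided into its $m$ copies of $Z$ --- forces some disc to meet $\partial\Gamma$ along a single arc whose label $U$ is a subword of a cyclic conjugate of $Z^{\pm m}$ of length $|U|>(m-1)\,|Z|$; this is the Newman-type bound that is unavailable when $m=1$ and is exactly what sharpens the conclusion. Since the boundary word $W$ consists only of $\bar g^{\pm 1}$ together with the single syllables $x$ and $y$, and since $|Z|\ge 2$, such a long arc $U$ must overlap $\bar g$ (or $\bar g^{-1}$) in an interior syllable; replacing the overlapped portion of $\bar g$ by the complementary subword of $Z^{\mp m}$, which has length less than $|Z|<|U|$, uses $Z^{m}=_{G}1$ to produce a strictly shorter lift of the same element $g$. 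This contradicts the minimal choice of $\bar g$, so in fact $W=1$ in $F$ and we are in the case already settled.

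The main obstacle is the last step of the previous paragraph: guaranteeing that the long relator-arc $U$ always delivers a genuine reduction of the fixed lift $\bar g$. When $U$ straddles one of the junctions $\bar g^{\pm1}\cdot x$ or $\bar g^{\pm1}\cdot y$, part of $U$ is pinned to the single syllable $x$ or $y$ rather than to $\bar g$, and one must check, using that $x\in G_\lambda$ and $y\in G_\mu$ are single syllables and that $|U|>(m-1)|Z|$ still leaves more than one syllable of $U$ inside $\bar g$, that a shortening (or an inductive step on conjugator length with $\lambda,\mu$ held fixed) is nonetheless forced. Establishing the spelling bound in this relative, many-factor setting and handling these boundary overlaps cleanly is where the real work lies; everything else is the free-product bookkeeping indicated above.
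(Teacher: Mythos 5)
The first thing to note is that the paper contains no proof of Theorem \ref{BrodskyTorsion} at all: it is imported verbatim from \cite[Corollary 3.7]{HS}, just as Theorem \ref{BrodskyLemma} is imported from \cite{B81}. So there is no internal argument to compare yours against; the closest analogue is the proof in \S\ref{torsion} of the torsion case of Theorem \ref{2ComplexVersion}, which shows what machinery results of this type actually require. Your outer reductions are sound: the non-cyclic case is exactly Theorem \ref{BrodskyLemma}, and in the remaining cyclic case your normal-form/centraliser argument correctly disposes of the subcase where $\bar g^{-1}y\bar g x^{-1}=1$ already holds in $F=\ast_\lambda G_\lambda$.

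The gap is in the core step. You claim the Newman-type bound --- some disc of a minimal picture meets the boundary in a single arc labelled by a subword $U$ of a cyclic conjugate of $Z^{\pm m}$ with $|U|>(m-1)|Z|$ --- follows from ``a combinatorial Gauss--Bonnet count'' merely because $m\ge 2$. That derivation fails for $m=2,3$: you have no small-cancellation control on $Z$ (it may overlap itself arbitrarily), and curvature counting on pictures then yields nothing; this is precisely why the curvature-based results in the literature, such as \cite{H4}, need fifth and higher powers, and why the Freiheitssatz itself is not known by such methods for $m=2,3$ over arbitrary factors. In this generality every known proof of a spelling/structure theorem uses local indicability of the factors in an essential way beyond torsion-freeness --- via towers, or, as in \S\ref{ords}--\S\ref{torsion} of this paper and in \cite{HS}, via a right order on $\ol G=(\ast_\lambda G_\lambda)/\<\<Z\>\>$ (Theorems \ref{Gli} and \ref{lilo}) used to select $u_{min}/u_{max}$ arcs and extremal discs (Lemma \ref{l:1}), followed by a counting argument on an annular picture cut along a minimal transversal. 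Your sketch never invokes local indicability after its first paragraph, which is a structural sign that the asserted lemma cannot come out of the stated method. Compounding this, the junction cases --- the long arc $U$ straddling the single syllable $x$ or $y$, where the proposed replacement does not shorten $\bar g$ but instead replaces $x$ (or $y$) by a new element, potentially in a different factor, so that your induction is not even set up --- are explicitly left open (``where the real work lies''), and nothing in your minimality assumptions prevents the long arc from straddling a junction. In short, what you have is a plausible plan whose two decisive steps, the spelling bound in this relative many-factor setting and the junction analysis, are exactly the unproved ones.
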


Other results on locally indicable groups from the literature that we will use include the following:

\begin{thm}[Freiheitssatz,\cite{B80},\cite{H81},\cite{S81}]\label{Freiheitssatz}
	
	Let $A,B$ be locally indicable groups and $R\in A*B$ 
	a cyclically reduced word of length at least two. 
	
	The  natural map $A\to \frac{A*B}{\<\< R\>\>}$ is injective.	
\end{thm}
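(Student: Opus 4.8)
The plan is to argue by contradiction via a topological tower argument in the spirit of Magnus and Papakyriakopoulos, after first reducing to the finitely generated case.

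First I would suppose the map is not injective, so that some $1\neq a\in A$ lies in $\<\<R\>\>$; thus $a$ is a product of finitely many conjugates $g_iR^{\pm1}g_i^{-1}$. Only finitely many elements of $A$ and of $B$ occur among $a$, $R$ and the syllables of the $g_i$, so I replace $A,B$ by the finitely generated subgroups $A_0,B_0$ that these generate. Subgroups of locally indicable groups are locally indicable, $R$ remains cyclically reduced of length $\ge 2$ in $A_0*B_0$ (such a word alternates between the factors, so both $A_0,B_0$ are nontrivial), and already $a\in\<\<R\>\>$ in $A_0*B_0$. Hence it suffices to treat $A,B$ finitely generated.

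Next I would build the topological model: let $L$ be obtained from $K(A,1)\vee K(B,1)$ by attaching one $2$-cell along the loop representing $R$, so $\pi_1(L)\cong G$. The assumption $a=1$ in $G$ means the loop representing $a$ in the $K(A,1)$ summand bounds, giving a map $f\colon D^2\to L$, which I take transverse to the relator cell so that its preimage is a disjoint union of subdisks. Since $D^2$ is simply connected, $f$ lifts through any covering; iterating ``lift to a cover, restrict to the subcomplex actually hit'' yields a tower $L'\to\cdots\to L$ and a surjective lift $\tilde f\colon D^2\onto L'$. Finiteness from the reduction makes the tower stabilise, and at the top $L'$ is simply connected.

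Finally, $L'$ is a simply connected, reduced diagram assembled from pieces of covers of $K(A,1)$ and $K(B,1)$ together with finitely many relator $2$-cells, with boundary the lift of the nontrivial element $a$; the vertex groups occurring are subgroups of $A$ or $B$, hence still locally indicable. The key point will be to show that $L'$ contains no relator $2$-cell: granting this, $L'$ embeds in a cover of $K(A,1)\vee K(B,1)$, whose fundamental group is a free product in which the $A$-factors inject, so a null-homotopic loop forces $a=1$, the desired contradiction. The hard part will be eliminating the relator cells: reducedness forbids adjacent cancelling pairs, and I would use local indicability to control the edge labels around the regions (via a compatible ordering, or a surjection onto $\Z$) so that an extremal or innermost relator cell cannot close up without violating reducedness or simple connectivity. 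This reconciliation of local indicability with the combinatorics of the relator cells at the top of the tower is the main obstacle and the technical heart of the argument.
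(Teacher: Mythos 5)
Your proposal targets a statement that the paper itself does not prove: Theorem \ref{Freiheitssatz} is imported from the literature (\cite{B80}, \cite{H81}, \cite{S81}), and your tower strategy is essentially that of Howie's proof in \cite{H81}. Your reduction to finitely generated factors is correct, and the tower setup is the right framework. But there is a genuine gap, and you have flagged it yourself: the entire mathematical content of the theorem is concentrated in the step you defer as ``the main obstacle and the technical heart''. This cannot be waved through, because the statement is simply false without local indicability. For example, take $A=\<a\>\cong\Z/3$, $B=\<b\>\cong\Z/3$ and $R=abab^{-1}$, which is cyclically reduced of length $4$: the relation gives $bab^{-1}=a^{-1}$, so conjugation by $b$ inverts $a$; since $b^3=1$, applying it three times yields $a=a^{-1}$, hence $a^2=1=a^3$ and $a=1$ in $(A*B)/\<\<R\>\>$. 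So any correct argument must use local indicability in an essential, checkable way; the gesture towards ``a compatible ordering, or a surjection onto $\Z$'' is a pointer to where the hypothesis should enter, not a proof that it suffices.

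Beyond the deferred step, two assertions in your outline are unjustified or wrong as stated, and they sit exactly where the hypothesis must do its work. First, ``finiteness from the reduction makes the tower stabilise, and at the top $L'$ is simply connected'': termination is not a consequence of the finite-generation reduction; the standard argument is that each proper lift strictly increases the number of cells of the image complex, which is bounded by the number of cells of the disc, and what you obtain at the top depends on which covers you allow (in Howie's proof the covers are infinite cyclic, and producing the required epimorphisms onto $\Z$ compatibly with the relator cells is itself a key use of local indicability). Second, your proposed key claim -- that the top $L'$ contains no relator $2$-cell -- is not provable as stated: the top of a maximal tower is the image of a disc diagram and will in general contain relator cells (a simply connected $2$-complex can certainly contain $2$-cells). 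Relatedly, your assertion that ``the vertex groups occurring are subgroups of $A$ or $B$'' is false in general: a subcomplex of a cover of $K(A,1)$ need not be $\pi_1$-injective, nor need its fundamental group be abstractly a subgroup of $A$ (a union of two adjacent square boundaries in the universal cover of the torus has fundamental group $F_2$, not a subgroup of $\Z^2$), so even the local indicability of the pieces requires care -- the real proofs work with the images of these groups in $A$ and $B$. What the argument of \cite{H81} actually establishes, by induction on the number of relator cells in the disc, is a dichotomy at the top of the tower: either a reduction move is available (a cancelling pair, or a relator cell meeting the rest of the complex along a connected arc), or the boundary loop is already null-homotopic inside its $A$-piece, which projects down to $a=1$ in $A$. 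Supplying that dichotomy, with local indicability driving it, is the content of \cite{H81} (and, by quite different routes, of \cite{B80} and \cite{S81}); without it, your outline is a plan rather than a proof.
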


\begin{thm}[Weinbaum's Theorem, \cite{H82}]\label{Weinbaum}
	Let $A,B$ be locally indicable groups and $R\in A*B$ 
	a cyclically reduced word of length at least two. 
	
	No proper subword of $R$ represents the identity element of $\frac{A*B}{\<\< R\>\>}$. 
	
\end{thm}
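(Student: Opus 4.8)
The plan is to argue by contradiction, reducing first to a normal form and then invoking Howie's tower machinery, which is the locally indicable substitute for the Magnus-hierarchy and small-cancellation arguments available in the classical case. Write $R=r_1\cdots r_n$ in free-product normal form, with $n\geq 2$ and syllables alternating between distinct factors. After replacing $R$ by a cyclic permutation (which conjugates $R$ in $\ast_\lambda G_\lambda$, hence leaves $G$ and the cyclic-reducedness hypothesis unchanged), any nonempty proper subword of $R$ becomes a proper prefix $W=r_1\cdots r_j$ with $1\leq j\leq n-1$. Suppose $W=1$ in $G$. The two extreme cases are immediate from the Freiheitssatz (Theorem~\ref{Freiheitssatz}): if $j=1$ then $W=r_1$ lies in a single factor $G_\lambda$ (regrouping $\ast_\lambda G_\lambda$ as $G_\lambda\ast(\ast_{\mu\neq\lambda}G_\mu)$, which is still a free product of locally indicable groups with $R$ cyclically reduced of length $\geq 2$), so $r_1=1$ in $G$ is impossible; the case $j=n-1$ is symmetric, since then $V:=r_{j+1}\cdots r_n=W^{-1}R$ is also trivial in $G$ and is a single syllable. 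Thus I may assume $2\leq j\leq n-2$.

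Next I would realise the hypothesis geometrically. Let $K$ be the relative presentation complex of $G$: a graph of spaces with aspherical vertex spaces $K(G_\lambda)$ (each with $\pi_1=G_\lambda$) wedged at a point, together with a single $2$-cell $e_R$ attached along the loop reading $R$, so that $\pi_1(K)=G$. The relation $W=1$ is then witnessed by a reduced picture $\phi\colon(D,\partial D)\to(K,K^{(1)})$ on a disc with $\partial D$ reading $W$ and containing no cancelling pair of $e_R$-cells. The core step is to lift $\phi$ through a maximal tower $K=K_0\leftarrow K_1\leftarrow\cdots\leftarrow\widehat K$, an alternating composite of connected covers and subcomplex inclusions. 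Here local indicability is exactly what powers the ascent: at each stage the relevant vertex group is a nontrivial finitely generated subgroup of some $G_\lambda$, hence surjects onto $\Z$, furnishing an infinite cyclic cover to climb into. Maximality forces the top complex $\widehat K$ to be efficient, with its $e_R$-cells staggered, so that a reduced picture $\widehat D$ on the disc is tightly controlled.

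The endgame is a curvature count at the top of the tower, and this is where Weinbaum's conclusion genuinely exceeds the Freiheitssatz and where I expect the real difficulty. Applying combinatorial Gauss--Bonnet (total curvature $2\pi$ for a disc) to $\widehat D$, efficiency forces the interior $e_R$-cells to carry non-positive curvature, so a Greendlinger-type analysis must produce an outermost $e_R$-cell meeting $\partial\widehat D$ in an arc reading more than half of a cyclic conjugate of $R^{\pm1}$. Since $\partial\widehat D$ reads the proper subword $W$, that arc is itself a subword of $R$, and I would use cyclic reducedness of $R$ to show that such a long overlap cannot sit inside a proper subword without forcing the cell's remaining boundary onto $\partial\widehat D$ as well --- which would make $\partial\widehat D$ spell all of $R$, contradicting properness. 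Pushing this through forces $\widehat D$ to be empty, whence $W=1$ already in $\ast_\lambda G_\lambda$; but $W$ is a reduced word of positive length, the final contradiction. The principal obstacle is precisely this boundary bookkeeping: a proper subword of $R$ may contain well over half of $R$, so crude spelling-length estimates do not by themselves suffice, and one must exploit exactly how cyclic reducedness constrains the pinning of an outermost relator cell to a subword boundary --- while keeping the argument free of any implicit appeal to the very statement being proved.
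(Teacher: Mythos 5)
This paper does not prove Theorem~\ref{Weinbaum}; it quotes it from \cite{H82}, so your attempt must be measured against Howie's proof there. Your reductions are fine: cyclic permutation to make the subword a prefix, disposal of the extreme cases via the Freiheitssatz (Theorem~\ref{Freiheitssatz}, after regrouping the free product into two locally indicable factors), and the realisation of $W=1$ by a reduced disc diagram lifted up a maximal tower of $\Z$-covers powered by local indicability. Up to that point you are walking the same road as \cite{H82}.

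The gap is exactly where you flag it, and it is fatal to the argument as written: the Gauss--Bonnet/Greendlinger endgame is not available. No small-cancellation hypothesis is imposed on $R$ -- it is an arbitrary cyclically reduced word of length $\ge 2$ over arbitrary locally indicable factors -- so interior relator cells carry no curvature constraint and no Greendlinger-type lemma produces an outermost cell reading more than half of $R^{\pm1}$; this already fails for ordinary one-relator groups, where Weinbaum's theorem is a Magnus-hierarchy fact, not a small-cancellation one. What the tower and staggeredness actually buy is different: the $\Z$-cover (equivalently, a right ordering dominated by $\Psi:\pi_1\onto\Z$, as in \S\ref{restrict}) singles out \emph{extremal} occurrences of each edge in the relator, and in a \emph{reduced} picture the extremal arc of a layer-maximal (resp.\ layer-minimal) cell must reach $\partial D$, since two cells meeting along a shared extremal arc cancel -- precisely the mechanism recorded in \S\ref{ords} and Lemma~\ref{l:1} of this paper. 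The contradiction then comes from a syllable-span count on the consecutive boundary arcs of such an extremal cell (compare the count ``at least $(m-1)SL(Z)+2$ syllables'' in \S\ref{torsion}) together with an induction on the number of cells, showing the boundary must spell too much of $R$ to be a proper subword. Your closing sentence -- that crude length estimates do not suffice and the pinning of the outermost cell must somehow be extracted from cyclic reducedness -- is an accurate self-diagnosis: the proposal stops at the point where the proof has to happen, and the missing ingredient is the extremal-arc argument, not a refined curvature count.
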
 

\begin{thm}[\cite{H82}]\label{Gli}
	Let $A,B$ be locally indicable groups and $R\in A*B$ 
	a cyclically reduced word of length at least two which is not a proper power in $A*B$. 
	Then $\frac{A*B}{\<\< R\>\>}$ is locally indicable. 
\end{thm}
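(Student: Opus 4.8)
The plan is to prove local indicability by the covering-space/tower method, reducing an arbitrary nontrivial finitely generated subgroup to a controlled homological computation. First I would reduce to the case that $A$ and $B$ are themselves finitely generated: the relator $R$ involves only finitely many elements of each factor, so $G$ is the directed union of the subgroups $(A_0*B_0)/\<\< R\>\>$ as $A_0\le A$ and $B_0\le B$ range over the finitely generated subgroups containing the support of $R$. Each such subgroup injects into $G$ by the Freiheitssatz-type results (Theorem \ref{Freiheitssatz} and its relative consequences), local indicability is inherited by directed unions of subgroups (a finitely generated subgroup of the union already lies in one term), and both $A,B$ are nontrivial since $R$ has length at least two. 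Hence it suffices to treat $A,B$ finitely generated, and the goal becomes to show that every nontrivial finitely generated subgroup $H\le G$ surjects onto $\Z$, equivalently that $H^{\mathrm{ab}}$ is infinite.

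Second, I would model $G$ as the fundamental group of the relative presentation complex $X=K(A,1)\vee K(B,1)$ carrying a single $2$-cell attached along $R$, and realise $H$ by the corresponding connected covering $p\colon \wh X\to X$, which finite generation allows me to take with finitely generated fundamental group. The target is then that $H_1(\wh X;\Z)$ is infinite. Here I would invoke Howie's tower machinery: build a maximal tower $\wh X\to X_n\to\cdots\to X_0=X$ in which each map is a composite of a covering projection and the inclusion of a $\pi_1$-surjective subcomplex, so that $H$ maps onto $\pi_1(X_n)$ and it suffices to see that $\pi_1(X_n)$ surjects onto $\Z$. Maximality forces $X_n$ to be reduced, so its $2$-cells are attached along genuine length-$\ge 2$ relator words over the locally indicable fundamental group of its $1$-skeleton (a free product of conjugates of subgroups of $A$ and $B$, again locally indicable by Kurosh), and with strictly fewer $2$-cells than downstairs.

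Third, I would run an induction on this complexity (the number of $2$-cells, or the free-product length of the relator) down to a base case in which the top complex carries at most one $2$-cell, attached along a reduced word $R'$. If there is no $2$-cell, $\pi_1(X_n)$ is a nontrivial free product of locally indicable groups, hence locally indicable, and maps onto $\Z$. If there is one, the computation is direct: the relevant base group has abelianisation of free rank at least two, since each of the two locally indicable factors meeting $R'$ contributes a free summand, so killing the single class $[R']$ drops the rank by at most one and leaves $H_1$ infinite, yielding the surjection onto $\Z$. The hypothesis that $R$ is \emph{not a proper power} is exactly what I expect to be the crux and the main obstacle: without it the relator could be a $k$-th power, introducing $k$-torsion and collapsing $H_1$ to a finite cyclic group (and indeed Theorem \ref{Gli} genuinely fails for proper powers). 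Thus throughout the tower I must propagate the non-proper-power condition to the reduced relators and use Weinbaum's Theorem (Theorem \ref{Weinbaum}) to ensure that no proper subword becomes trivial, so that the attaching word never becomes a proper power in the $1$-skeleton group and the abelianisation stays infinite at every stage.
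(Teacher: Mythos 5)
This statement is nowhere proved in the paper: it is quoted as background from [H82] (Howie, \emph{Locally indicable groups}), so the only possible comparison is with the method of that source, and your outline — reduce to finitely generated factors, realise the subgroup by a covering of the relative presentation complex, run tower machinery down to a homological base case — is indeed the shape of the original argument. But as written it has a genuine gap, and it sits exactly where you predicted the crux would be. Your decisive computation (the base group has first homology of free rank at least $2$ because two finitely generated locally indicable factors meet the relator, and killing the single class $[R']$ leaves it infinite) nowhere uses the hypothesis that $R$ is not a proper power; since the theorem is false for proper powers, an argument whose only quantitative step is insensitive to that hypothesis cannot be complete. Your diagnosis of what breaks for $R=S^k$ is also off: the rank count is unaffected, and the full group $(A*B)/\<\<S^k\>\>$ still surjects onto $\Z$ when $A,B$ are finitely generated. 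What fails is local indicability of \emph{subgroups}: the image of $S$ has finite order, so $\<S\>$ is a nontrivial finitely generated subgroup with finite abelianisation. For the cover corresponding to such a subgroup, the maximal tower top \emph{degenerates}: the surviving $2$-cell can end up attached along a proper power of a loop, or meeting fewer than two vertex pieces with nontrivial fundamental group, and then the ``rank $\ge 2$ minus $1$'' count collapses. Proving that this degeneration cannot occur when $R$ is not a proper power — via Weinbaum's theorem, the Freiheitssatz, and careful bookkeeping of how attaching words lift through $\Z$-covers — is the actual content of Howie's proof, and it is the part your proposal leaves as an expectation rather than an argument.

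A secondary, more technical gap: maximal towers exist for maps of \emph{compact} complexes, whereas the cover $\wh X$ realising $H$ is not compact. One must tower-lift a compact carrier $Z\subseteq\wh X$ of a generating set of $H$, and then explain why a surjection $\pi_1(X_n)\onto\Z$ at the top transfers to $H$ itself: the quotient maps $\pi_1(Z)\onto H$ and $\pi_1(Z)\onto\pi_1(X_n)$ are different, so the surjection onto $\Z$ does not automatically descend to $H$. In Howie's treatment this is handled by keeping the vertex pieces of every complex in the induction as full (possibly non-compact) aspherical pieces, whose fundamental groups are genuinely subgroups of $A$ and $B$ and hence locally indicable, and inducting on the number of $2$-cells of the carrier; quoting a generic ``maximal tower for $\wh X$'' skips both this and the related fact that an arbitrary finite subcomplex of a cover of $K(A,1)$ need not have locally indicable, or even $A$-embeddable, fundamental group. (Your first reduction also silently uses that $(A_0*B_0)/\<\<R\>\>$ injects into $(A*B)/\<\<R\>\>$; this is true but is itself a nontrivial consequence of the same machinery, not of the Freiheitssatz as stated.)
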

A {\em right ordering} on a group $G$ is a total order $<$ such that $(\forall ~x,y,z\in G)$ $y<z\Rightarrow yx<zx$.  
A group is {\em right orderable} if it has a right ordering.

\begin{thm}[\cite{BH}]\label{lilo}
	Every locally indicable group is right orderable.
\end{thm}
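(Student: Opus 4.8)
The plan is to prove the equivalent statement that $G$ admits a \emph{positive cone}, that is, a sub-semigroup $P\subseteq G$ with $G=P\sqcup\{1\}\sqcup P^{-1}$; given such a $P$ one recovers a right ordering by declaring $x<y$ whenever $yx^{-1}\in P$, and conversely. The first step is to reduce the existence of a positive cone to a finitary condition. I would show that $G$ is right orderable if and only if every finite subset $S\subseteq G\setminus\{1\}$ admits a sign function $\e\colon S\to\{\pm 1\}$ such that the sub-semigroup of $G$ generated by $\{s^{\e(s)}:s\in S\}$ does not contain $1$. The forward implication is immediate, taking $\e(s)$ so that $s^{\e(s)}\in P$. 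For the converse I would view sign functions $G\setminus\{1\}\to\{\pm 1\}$ as points of the compact space $\{\pm 1\}^{G\setminus\{1\}}$, observe that the conditions defining a positive cone (one sign per inverse-pair, closure under products) are each closed and involve only finitely many coordinates, and invoke Tychonoff compactness: a positive cone exists provided every finite family of these constraints is simultaneously satisfiable, which is precisely what the finitary condition supplies.

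It then remains to verify the finitary condition, and this is where local indicability enters, via an induction on $|S|$. Given a finite $S\subseteq G\setminus\{1\}$, set $H:=\<S\>\neq 1$. By local indicability there is an epimorphism $\phi\colon H\onto\Z$. Since $\phi$ is onto and $S$ generates $H$, at least one $s\in S$ has $\phi(s)\neq 0$; partition $S=S_+\sqcup S_0$ where $S_+:=\{s:\phi(s)\neq 0\}$ (nonempty) and $S_0:=S\cap\Ker\phi$. For $s\in S_+$ choose $\e(s)$ so that $\phi(s^{\e(s)})>0$, and since $|S_0|<|S|$ apply the inductive hypothesis to obtain signs on $S_0$ whose associated semigroup omits $1$. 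To see that the combined signs work, suppose some nonempty product $w$ of the chosen generators equals $1$ and apply $\phi$: every letter from $S_0$ maps to $0$ while every letter from $S_+$ maps to a strictly positive integer, so if $w$ involved any letter of $S_+$ then $\phi(w)>0$, contradicting $w=1$. Hence $w$ is a product of generators from $S_0$ alone, contradicting the inductive choice. The base case $S=\emptyset$ is vacuous, and the argument also shows $G$ is torsion-free, since a nontrivial element of finite order would generate a finite subgroup admitting no epimorphism onto $\Z$, contrary to local indicability, so no degenerate single-element cases arise.

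I expect the main obstacle to be the compactness reduction rather than the induction: one must set up the space of sign functions and the closed constraints so that a point satisfying all of them genuinely yields a total, right-invariant order, and check that the finitary condition really does make every finite subfamily of constraints consistent, which uses that the no-identity semigroup on $S$ can be extended to a coherent partial sign assignment on the finitely many elements named by the chosen constraints. The inductive step, by contrast, is robust and in fact only uses that every nontrivial finitely generated subgroup has some nontrivial right-orderable quotient; taking that quotient to be $\Z$ is exactly the content of local indicability, which is what makes the hypothesis directly applicable.
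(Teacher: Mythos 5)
Your proposal is correct: the compactness reduction to the finitary sign condition, followed by the induction on $|S|$ using an epimorphism $\<S\>\onto\Z$ from local indicability, is precisely the classical Burns--Hale argument. The paper does not prove this theorem itself but quotes it from \cite{BH}, and your proof is essentially the proof given there, so there is nothing to compare beyond noting that your inductive step, as you observe, really only needs each nontrivial finitely generated subgroup to have a nontrivial right-orderable quotient, which is exactly the level of generality of the Burns--Hale theorem.
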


In \S \ref{prelims} 
we reformulate the main theorems in terms of 2--complexes with
two distinguished sets of edges.  
We explain 
why the proof of Theorem \ref{main} reduces to consideration of a free subgroup 
of rank $2$ in the conjugacy intersection.
Also 
we explain how pictures are to be used, 
and the essential use of a right order to select maximal and minimal edges
on each disc in the pictures is described.

In \S3 we use the methods of \S \ref{prelims} to prove the torsion version of the main theorem, Theorem \ref{maintorsion}.
 
We begin \S4 with the plan for the proof in the torsion-free case, Theorem \ref{main}, which is broken into
several stages.  We then  introduce a notion of complexity that forms the basis for an inductive proof.

We make some technical adjustments to our $2$-complex, and prove the initial step of the induction. 
The inductive step involves lifting our pictures to a suitable cover; and then splits into two cases,
depending on the nature of the lifted pictures.  The first, more straightforward, of these
 is handled in \S \ref{first}.  The other, more complicated, case is dealt with in the final three sections.  In \S \ref{restrict} we reduce to consideration of one particular conjugating element; in \S \ref{formW1} we show that annular pictures for conjugacies with that particular conjugating element have a special form; and finally in \S \ref{finalcurtain} we use that special form to complete the proof. 

 \medskip 
We are grateful to an anonymous referee for several helpful suggestions that have improved our exposition.

\section{Preliminaries}\label{prelims} 

\subsection{Reformulation and reductions} \label{refandreds}

\begin{lem}\label{red1}
Theorem \ref{main} (resp. \ref{maintorsion}) is true if and only if it is true in the case where $\Lambda=I\cup J$
\end{lem}

\begin{proof}
\noindent  The ``only if'' part is clear, so suppose that the result holds when $\Lambda=I\cup J$.
Suppose that $\Lambda \setminus (I\cup J)\neq \emptyset$ and set $D=\ast_{\lambda\in\Lambda \setminus (I\cup J)} G_\lambda$.
If $R$ contains no occurrences from $D$, then $\frac{(\ast_{j\in \Lambda} G_\lambda)}{\<\<R\>\>} = \bigl(\frac{\ast_{\lambda\in I\cup J} G_\lambda}{\<\<R\>\>}\bigr) *D$, 
and in a free product, 
elements of a factor are  conjugate if
and only if they are conjugate in that factor, and the conjugating element is an element of the same factor.

Now suppose  that  $R$ contains  occurrences in $D$.
Write $A=\ast_{i\in I\setminus J}G_i$, $B=\ast_{j\in J\setminus I}G_j$, $C=\ast_{\lambda\in I\cap J}G_\lambda$.
By the Freiheitssatz (Theorem \ref{Freiheitssatz}) we may regard $A*B*C$ as a subgroup of $G$.  The conjugacy intersection of interest  is contained in $(A*B*C)\cap g^{-1}(A*B*C)g$, and the relevant version of 
Brodski\u\i's Theorem (\ref{BrodskyLemma} or \ref{BrodskyTorsion}) 
applies to say that the conjugating element $g$ lies in the subgroup $A*B*C$. 
The result then follows from the conjugacy properties of free products as above.
\end{proof}

	So from now on we shall assume that $\Lambda = I\cup J$.

\smallskip
\noindent\underline{Reformulation} :	 We rephrase   Theorems \ref{main} and \ref{maintorsion} in terms of 2--complexes.	

Let $Y=X\cup\cS\cup\cT\cup\alpha$ be a connected 2--complex, where:
\begin{enumerate}
	\item[$\bullet$]$X$ is a 2--complex 
	(not necessarily connected),  
	\item[$\bullet$] each component of $X$  has locally indicable (possibly trivial)
	fundamental group, 
	\item[$\bullet$] $\cS,\cT$ are  disjoint, non--empty finite sets of edges,
\item[$\bullet$] the $2|\cS\cup\cT|$ endpoints of $(\cS\cup\cT)$-edges are pairwise distinct vertices of $X$,
	\item[$\bullet$] $\alpha$ is a 2-cell adjoined by identifying its boundary
$\partial\a$ with the loop in $X^{(1)}\cup\cS\cup\cT$, 
	labelled $R=Z^m$ where $Z=u_1x_1u_2x_2\dots u_kx_k$ and $m\ge 1$
(throughout the paper, $\partial\b$ will denote the boundary of a $2$-cell $\b$ in some complex, regarded as an edge path in the $1$-skeleton of the complex),
\item[$\bullet$] $Z$ does not represent a proper power in $\pi_1(X\cup\cS\cup\cT)$, 
	\item[$\bullet$]  each $u_i$ in $Z$ is $e^{\pm1}$ for some edge $e\in\cS\cup\cT$, 
	each $x_i$ is  an  edge-path of length $1$ in (some component of) $X^{(1)}$,
\item[$\bullet$] $Z$ is cyclically reduced: if $x_j$ is a  nullhomotopic loop 
 in $X$ 
then $u_{j+1}\ne u_j^{-1}$ (indices mod $k$), 
	\item[$\bullet$] every edge $e\in\cS\cup\cT$ occurs in $Z$  (in particular  $k\ge 2$).
\end{enumerate}

\smallskip\noindent
The conditions ensure that $\pi_1(Y)$ is a one-relator product
$$(F * (*_{\lambda\in\Lambda} G_\lambda ))/\<\<R\>\>$$
where $F$ is a free group (possibly trivial), 
and the $G_\lambda$ are the fundamental groups of the components of $X$. 
Let $$\ol{G}:=(F * (*_{\lambda\in\Lambda} G_\lambda ))/\<\<Z\>\>.$$
Then  
$\ol{G}$ is itself locally indicable by Theorem \ref{Gli}. 
In particular when $m=1$ we have $R=Z$ and 
$\pi_1(Y)=G=\ol{G}$ is locally indicable.

\begin{thm}[Collins for complexes]\label{2ComplexVersion}
	$$ $$
	\vskip -0.8cm

	Suppose that $U_1,U_2,V_1,V_2, Q$ are   paths in $Y^{(1)}$ such that:
	\begin{enumerate}
		\item[] $U_1, U_2$ are closed paths in $X^{(1)}\cup\cS$ based at a vertex $v_0$;
		\item[] $V_1, V_2$ are closed paths in $X^{(1)}\cup\cT$ based at a vertex $v_1$;
		\item[] $Q$ is a path in $Y^{(1)}$  from $v_0$ to $v_1$;
		\item[] there are homotopies  rel endpoints between $Q^{-1}U_iQ$ and $V_i$ in $Y$ for $i=1,2$.
		\item[]   Let $K$ denote the subgroup of $\pi_1(Y,v_0)$ generated by $U_1$ and $U_2$.  
			Suppose in addition that either:
		\begin{enumerate}
			\item[(for Theorem \ref{main})] $ K$ is not cyclic; or
			\item[(for Theorem \ref{maintorsion})]  $m\ge 2$ and $K$ is non-trivial.
		\end{enumerate} 
	\end{enumerate}
	Then $Q$ is homotopic rel endpoints in $Y$ to a path $Q_1\cdot Q_2$ where $Q_1$ is in $X\cup\cS$ and
	$Q_2$ is in $X\cup\cT$. 		
\end{thm}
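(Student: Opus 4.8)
The plan is to convert the two conjugacy hypotheses into pictures over $Y$ and to run an induction on a complexity measure counting the relator discs (copies of the $2$--cell $\alpha$). Two preliminary reductions set this up. First, writing $M_\cS:=\pi_1(X\cup\cS)$ and $M_\cT:=\pi_1(X\cup\cT)$ for the two Magnus subgroups (which embed by the Freiheitssatz, Theorem \ref{Freiheitssatz}), the homotopies say $U_i\in M_\cS$, $V_i\in M_\cT$, and $[Q]^{-1}U_i[Q]=V_i$ in $G=\pi_1(Y)$; the desired conclusion is precisely that $[Q]\in M_\cS\cdot M_\cT$. Second, following the reduction explained in \S\ref{prelims}, I would assume in the non-cyclic case that $K=\<U_1,U_2\>$ is free of rank two and lies in $[G,G]$: the role of non-cyclicity is to furnish two independent, non-commuting elements that must be conjugated simultaneously by the \emph{same} $Q$, and it is this rigidity that will pin down $Q$. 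In the torsion case ($m\ge2$) no rank reduction is available, since $K$ may be cyclic or even finite; there the extra force comes from $m\ge 2$ through Theorem \ref{BrodskyTorsion}.

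Before invoking pictures I would dispose of the configurations in which $Q$ can be shortened. Writing $g=[Q]$ in free-product normal form in $F*(\ast_\lambda G_\lambda)$, each syllable lies in $M_\cS$, in $M_\cT$, or in $M_\cS\cap M_\cT$. If the leading syllable $q_0$ lies in $M_\cS$, then $q_0^{-1}U_iq_0\in M_\cS$ and the pair $(Q',\,q_0^{-1}U_iq_0)$ with $Q=q_0Q'$ satisfies the hypotheses with a shorter conjugator and a non-cyclic $q_0^{-1}Kq_0$; symmetrically if the trailing syllable lies in $M_\cT$. Induction then peels $Q$ into the required form. The only obstinate configuration is the one in which $Q$ begins on the $\cT$--side and ends on the $\cS$--side with no free-product cancellation available, so that the only possible simplification is visible after rewriting modulo the relator. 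This is exactly the information that pictures encode, and it is where the real work lies.

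For the core case I would assemble the two nullhomotopies of $Q^{-1}U_iQV_i^{-1}$ into a single annular picture $P$ over $Y$, with the $U$--data read on one boundary circle, the $V$--data on the other, and $Q$ appearing as a transverse arc; the discs of $P$ carry boundary label $Z^m$ and are joined by arcs dual to the edges of $\cS\cup\cT$ and of $X$. The essential device is the right order on the locally indicable group $\ol G$ (Theorems \ref{Gli} and \ref{lilo}): on each disc it singles out maximal and minimal $\cS$--edges and $\cT$--edges, and comparing these across adjacent discs controls how the conjugating arc threads $P$. Theorem \ref{BrodskyLemma} (or \ref{BrodskyTorsion}) handles the rank--one interactions between single factors $G_\lambda$, while Weinbaum's Theorem (\ref{Weinbaum}) forbids any proper subword of $Z$ from collapsing and so rules out spurious cancellation inside a disc. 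To drive the induction I would lift $P$ to a suitable cover of $Y$ in which the relator discs acquire new boundary labels, the covering being arranged (in the torsion case, by unfolding $Z^m$ to $Z$) so that the locally indicable $\ol G$ and its order govern each lifted disc. The inductive step then splits: if some relabelled disc meets edges in both generators of $K$, the extremal--edge analysis together with Brodsky's Theorem collapses $P$ and lowers the complexity; otherwise one must first prove that $P$ has a rigid special form and then read the splitting of $Q$ off that form.

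The main obstacle is this last case. Establishing the special form — essentially that, once $P$ is reduced, the $\cS$--arcs and $\cT$--arcs do not interleave along $Q$ — is where the right order and the Freiheitssatz/Weinbaum constraints must be pushed hardest, and it is the only place where the full strength of the non-cyclicity of $K$ (or of $m\ge2$) is consumed. I expect the bookkeeping of maximal and minimal edges under the covering map, and the verification that no reduction is available at the extremal disc, to be the delicate points; everything else is either one of the reductions above or a direct appeal to the quoted theorems.
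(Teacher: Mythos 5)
Your outline reproduces the paper's roadmap at the level of its introduction, but at every point where the roadmap meets actual mathematics there is a gap, and in two places the mechanism you propose is not the one that works. First, the torsion case: you suggest handling $m\ge2$ by lifting to a cover that ``unfolds $Z^m$ to $Z$'', but no cover and no induction are used there. The paper's torsion argument is direct: form a reduced annular picture for $U=QVQ^{-1}$, choose a transversal $\g$ meeting the arcs minimally, replace $U,V$ by $U^N,V^N$ so that each boundary circle meets arcs missing the two extremal discs $\b_1,\b_2$ supplied by Lemma \ref{l:1}, and then count: the consecutive sequence of boundary arcs at $\b_1$ spans a cyclic subpath of $Z^m$ of syllable length at least $(m-1)SL(Z)+2$, so at least $(m-1)k$ of its arcs go to one vertical side, while rerouting $\g$ past $\b_1$ crosses at most $k-|\cS\cup\cT|$ arcs -- contradicting minimality. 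This is precisely where $m\ge2$ is consumed; your plan supplies no substitute mechanism. Second, your inductive complexity is miscast: the induction is not on the number of relator discs in a picture but on $c(Y)=k-|\pi_0(X)|$, a complexity of the \emph{complex}. Making it decrease requires the Adjustment $\Theta$ step (Lemma \ref{Agenerates}: for a shortest counterexample, $Q,U_i,V_i$ can be pushed into $\Theta\cup\cS\cup\cT$, so each $\pi_1(X_i)$ may be replaced by the finitely generated $A_i$), followed by a four-case construction of an epimorphism $\Psi:\pi_1(Y)\to\Z$ arranged so that any lift of $\partial\a$ to the $\Z$-cover meets strictly more than $|\pi_0(X)|$ components of $p^{-1}(X)$; only then does a subcomplex of $\wh Y$ containing a single lift of $\a$ have smaller complexity. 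In the first branch of the dichotomy the lifted pictures are not collapsed disc-by-disc as you suggest; instead one applies the iterated Brodsky lemma (Lemma \ref{itBr}) to an ascending chain $Y_0\subset Y_1\subset\cdots$ of subcomplexes of the cover, each adding one lift $\a_{m_j}$, to homotope $\wh Q$ into the single-relator subcomplex $Y_0$ where the inductive hypothesis applies.

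Third, and most seriously, the case you yourself identify as ``the main obstacle'' is exactly the part you leave unproved, and your guess at the special form (that $\cS$-arcs and $\cT$-arcs ``do not interleave along $Q$'') is not the right statement. The paper's second branch proceeds by: (i) Lemma \ref{surlepont}, a double-chain argument through $G$-max and $G$-min arcs in the $\Z$-cover producing both a $G$-min and a $G$-max up-down connection; (ii) cutting along the $W_1$ side of a $G$-min connection to reduce to the conjugator $Q=W_1$, normalized so that $SL(W_1)\le\min\{SL(W_2),SL(W_3),SL(W_4)\}$; (iii) the rigidity lemmas (Lemmas \ref{l:3}, \ref{l:2}, Corollary \ref{W1W2}, Lemma \ref{newlemma:1}) showing every reduced rectangular picture with vertical sides $W_1$ has the form $\pm(\G_{min}+\D+\G_{max})$, with the degenerate possibilities $W_1=W_2$ or $W_3=W_4$ disposed of by the unique positioning of $W_1$ in $R$ and by rerunning the argument with the opposite right order; and (iv) a final minimality argument choosing pictures $P_1,P_2$ minimizing the count $\ell(P)$ of top $\cS$-arcs between extremal edges, where superadditivity $\ell(P_1+P_2)\ge\ell(P_1)+\ell(P_2)$ applied to $P_3=P_1-P_2$ yields $\ell(P_1)\ge\ell(P_3)+\ell(P_2)$ or $\ell(P_2)\ge\ell(P_3)+\ell(P_1)$, a contradiction. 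This is where rank-$2$ freeness of $K$ is finally spent (via $U_3=U_1U_2^{-1}\notin\<U_1\>$), not merely in the lifting step as you surmise. Without (i)--(iv) the proposal is a restatement of the theorem's proof strategy rather than a proof.
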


This is the theorem we shall prove.   
Theorems \ref{main} and \ref{maintorsion} follow  from Theorem \ref{2ComplexVersion} by taking:

$X_1$  a  one-vertex 2-complex with  fundamental group $A=\ast_{i\in I\setminus J} G_i$,  

$X_2$  a   connected two-vertex 2-complex with fundamental group $C=\ast_{i\in I\cap J} G_i$ 
 and an edge $x_0$ joining the two vertices,

$X_3$  a  one-vertex  2-complex with fundamental group $B=\ast_{j\in J\setminus I} G_j$, 

$X=X_1\cup X_2 \cup X_3$,

$\cS=\{s\}$ a single edge joining 
 one vertex of $X_2$ to 
 the vertex of $X_1$,

$\cT=\{t\}$ a single edge joining the  other vertex of $X_2$ to 
 the vertex of $X_3$,

$\alpha$ a 2-cell adjoined identifying its boundary with the path $R''$ obtained as follows:
write the word
	$R=a_1a_2\dots a_k$ in cyclically reduced form: where $a_i$ is in some factor $A$, $B$ or $C$, 
and $a_{i+1}$  is in a different factor from $a_i$ (subscripts modulo $k$)  
(in particular $a_i\neq 1$). 
Then the  	corresponding closed path is 
	 $R'= b_1\dots b_k$ where $b_i=s\g_is^{-1}$ if $a_i\in A$, $\g_i$ is a path in $X_1^{(1)}$ representing $a_i$, $b_i=t\g_it^{-1}$ if $a_i\in B$, $\g_i$ is a path in $X_3^{(1)}$ representing $a_i$, and
$b_i=\g_i$ if $a_i\in C$, $\g_i$ is a path in $X_2^{(1)}$ 
such that $a_i$ is represented by $g_i$ or $g_ix_0^{\pm1}$. 
 We may suppose that the endpoints of $\cS\cup\cT$ are pairwise distinct as required by the Reformulation.  (For example, trisect each edge $u\in\cS\cup\cT$ and replace it by its (closed) middle third. Then expand $X$ to include the closures of the other thirds of $\cS\cup\cT$.)  We may  also suppose that each $\g_i$ 
 has length $1$  to match the description in the Reformulation;
if necessary add an edge $x_i$ for the path $\g_i$,
and a 2-cell with boundary  $x_i \g_i^{-1}$. 

Theorems \ref{main} and \ref{maintorsion} now follow immediately from Theorem \ref{2ComplexVersion} 
and Lemma \ref{red1}
with $Y= X\cup\cS\cup\cT\cup\alpha$   where $\a$ is
attached along the path $R''$, the path obtained by cyclically reducing the path $R'$ in $Y^{(1)}$.
  
 Note that equality of group elements in our main theorems translates into homotopy (rel base-point or rel end-points) in the $2$-complex version.  We will often use the notation $\sim$ to denote these homotopy relations.
 
  We also record the following useful observation.
\begin{lem}
If Theorem \ref{main} fails, then there is a counterexample to Theorem \ref{2ComplexVersion} in which the closed paths $U_1,U_2$ generate a free subgroup of rank $2$ in $\pi_1(X\cup\cS,v_0)$ -- and therefore $V_1,V_2$ generate a free subgroup of rank $2$ in $\pi_1(X\cup\cT,v_1)$.
\end{lem} 	

\begin{proof}  
For each component  
 $X_\lambda$ of $X$,  
 $\pi_1(X_\lambda)$ is a free factor of $\pi_1(X\cup\cS\cup\cT)$, and
indeed  
$\pi_1(X\cup\cS\cup\cT) \cong \pi_1(X_\lambda) * D$ where $D$ is isomorphic to the free product of the
fundamental groups of the other components and a free group.
Since $R$ is not contained in  
 $X_\lambda$,
it follows that $D$ is locally indicable and not trivial,
and   
 $\pi_1(X_\lambda)$ is naturally included in  $\pi_1(Y)$ as a subgroup
by  Theorem \ref{Freiheitssatz}.

For any  component $N$ of $X\cup\cS$,  the fundamental group  (choosing a  base  point in $N$)
$\pi_1(N)$ is a free product of the fundamental groups of the components 
$X_\lambda$ in $N$ and  a free group,
and the Kurosh subgroup theorem says any subgroup 
is  a free product of conjugates in $\pi_1(N)$ of subgroups of the  
$\pi_1(X_\lambda)$ and a free group. 

If the subgroup $K$ generated by $U_1,U_2$ is  non-cyclic and contains no 
rank 2 free subgroup, then $K$ is conjugate in  $\pi_1(N,v_0)=\pi_1(X\cup\cS,v_0)$ 
(where $N$ is the component of $X\cup\cS$ with $v_0$ in $N$) 
to a  subgroup of $\pi_1(X_\lambda)$ for some $\lambda$. 
Similarly the subgroup $L$ generated by $V_1,V_2$ is conjugate  in   $\pi_1(N',v_1)=\pi_1(X\cup\cT,v_1)$ 
(where $N'$ is the component of 
$X\cup\cT$ with  $v_1$ in $N'$) 
to a subgroup of  some $\pi_1(X_\mu)$.
In this case   Theorem \ref{BrodskyLemma} applies 
to give that  $X_\lambda=X_\mu$ and the path $Q$ 
is homotopic rel endpoints into   
$X_\lambda$, which is stronger than the result claimed.
\end{proof}
 
\begin{rmk}\label{passtosubgroup}
The pair $U_1,U_2$ can in principle be replaced by any pair $U_1',U_2'$ that generates a rank $2$ subgroup of the free group $\<U_1,U_2\>$.  We will exploit this feature during the proof of Theorem \ref{2ComplexVersion}.
\end{rmk}

 \smallskip
\noindent{\bf Definition:} 
Let $Q$ be an edge path in $Y^{(1)}$: 
define a {\it $(Y,Q)$ Collins counter--example\/}, abbreviated to $(Y,Q)$-CCE,
to be  a quadruple\\ $(U_1, U_2,V_1,V_2)$ with $U_1, U_2$ closed edge paths in $X\cup\cS$ based at $v_0$, 
the initial vertex of $Q$, and
$V_1,V_2$ are closed edge paths in $X\cup\cT$ based at $v_1$, the final vertex of $Q$.
Moreover   for $i=1,2$, the closed path $QV_iQ^{-1}U_i^{-1}$ is nullhomotopic in $Y$, 
and $U_1,U_2$   generate a rank 2 free subgroup of 
$\pi_1(Y,v_0)$. 
The $(Y,Q)$-CCE is {\it trivial\/} if $Q$ is   homotopic rel end-points to a concatenation $Q_1\cdot Q_2$
with $Q_1$ in $X\cup\cS$ and $Q_2$ in $X\cup\cT$.

\smallskip
The task now  for the proof of Theorem \ref{2ComplexVersion} in the torsion-free case 
-- and hence also of Theorem \ref{main} -- is to prove that there are no
non-trivial CCEs.

\subsection{Pictures} 

We shall  make use of the duals of van Kampen diagrams, 
known as {\em pictures}.   
Originally a van Kampen diagram over a presentation was 
a finite decomposition of a surface (usually a disc or an annulus)
as a 2-complex mapping cellularly to the standard 2-complex for the presentation.
Pictures were introduced by Rourke in \cite{Rou} and studied in  the relative context by the second-named author \cite{S81}. 
We generalise slightly the original definition.
Let  $f:\Sigma\to Y$ be a continuous map from  a compact orientable surface  $\Sigma$,  
	here a disc representing an identity in $\pi_1(Y)$ 
	or an annulus representing a conjugacy relation in $\pi_1(Y)$, to  a $2$-complex $Y$, made transverse to the centres of the 2--cells of $Y$ and
	to the mid--points of the 1--cells of $Y$.
	The preimages of neighbourhoods of the centres of the 2--cells form a disjoint collection of 
	{\em discs} or {\em (fat) vertices} in $\Sigma$, and after a minor adjustment, 
	the preimages of the mid--points of the 1--cells form a properly embedded $1$-submanifold of the complement of the interiors of the discs, 
	each component of which is called an {\em arc}, carries a transverse orientation and is labelled by a $1$-cell of $X$.
A small regular neighbourhood of each arc is mapped to the corresponding $1$-cell in the direction of the transverse orientation.  
Reading the labels around a fat vertex gives the boundary of the corresponding $2$-cell of $Y$, up to cyclic permutation and inversion.    Reading labels around the boundary of a disc-picture (resp. the two boundary components of an annular picture) gives a nullhomotopic closed path in $Y$ (resp. a freely homotopic pair of closed paths in $Y$). 

Alternatively a picture can be obtained from 
 its dual van Kampen diagram by placing a disc in the interior of each face of the diagram and an arc transverse to each edge in the boundary of a face joining the midpoint of the edge to the disc.

A picture can be simplified 
if there is an arc joining two discs, 
such that reading in opposite directions round the two disks from the endpoints of the arc gives the same word.
The two disks and the arc joining them can then be removed and the remaining arcs joined up in a coherent way.
We also call this operation {\em cancellation} of the two disks.
A picture is said to be {\em reduced} if this operation cannot be performed.

The labelled graph on $\Sigma$ formed of the fat vertices and the labelled transversely oriented arcs is the picture over $Y$ of the map $f$.   
A picture is said to be {\em connected} if this graph is connected.   (Note that this property is distinct from that of connectedness of the ambient surface $\Sigma$.)
For more details on pictures, and examples of their usefulness in group theory we refer the reader to \cite{DH}, \cite{H4}.
The  generalisation of the original pictures is that we do not insist that the 1--cells in $Y$ be
loops.

For the proofs of our main theorems, 
we are interested here in  pictures over 
$Y=X\cup\cS\cup\cT\cup\alpha$, where $X$ is a 2--complex 
(not necessarily connected),  $\cS,\cT$ are sets of edges, and $\alpha$ is a 2-cell, such that $Y$ is connected.
We get a {\it relative picture\/} over $Y$  (relative to $\cS\cup\cT\cup\alpha$)
by removing from the picture all discs that do not
map to $\alpha$, and all arcs other than those labelled in $\cS\cup\cT$.
In particular we shall be interested in  homotopies 
in $Y$ 
of the form $U\sim Q_1VQ_2^{-1}$ where $U$ is a loop in $X\cup \cS$ based at $v_0$,
$V$ is a loop in $X\cup\cT$ based at $v_1$,
and $Q_1,Q_2$  are paths in $Y^{(1)}$ from $v_0$ to $v_1$. 
There is an obvious way to draw a corresponding picture 
as a rectangle  with top side labelled $U$, bottom side labelled $V$ and vertical sides labelled $Q_1,Q_2$.

\begin{figure}[h]
	\hskip -0.1cm\includegraphics[scale=0.22] {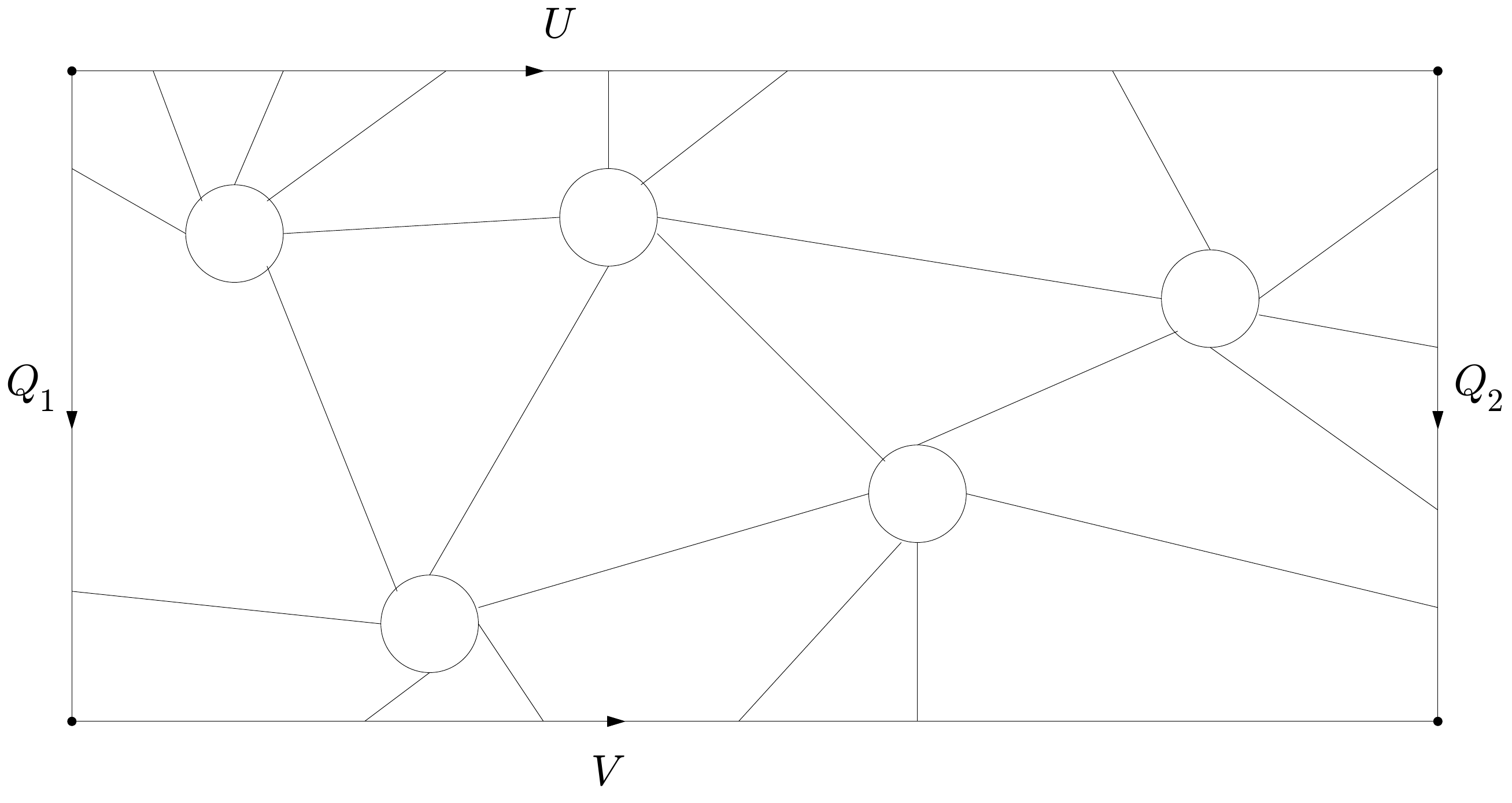}
	\caption{The top side is labelled by $U\in X\cup\cS$, the bottom by $V\in X\cup\cT$ }
\end{figure}

When $Q_2=Q_1$, the vertical sides can be identified to give a   picture on 
an annulus.
We refer to these as {\em rectangular} and {\em annular} relative  pictures. 
Given two such rectangular pictures   $\G,\G'$ for  $U\sim Q_1VQ_2^{-1}$ and for $U'\sim Q_2V'Q_3^{-1}$,
the right hand side of one can be identified with the left hand side of the other to
give a rectangular picture for the  
nullhomotopy $UU'\sim Q_1(VV')Q_3^{-1}$ 
 which (after reduction of cancelling   disks) 
 we denote $\G+\G'$. 
Indeed the set of such rectangular pictures  
(modulo a suitable homotopy relation) can be given a groupoid structure with
this gluing operation. 
Reflecting the rectangular picture $\Gamma$ in a vertical mirror provides 
an inverse, which we write $-\Gamma$.

	\subsection{Syllables and syllable length} 
	In analogy with a commonly used terminology for words in a free product with amalgamation, we will use the term {\em syllable} to denote a subpath of a path in $X\cup\cS\cup\cT$ which contains $\cS$-edges or $\cT$-edges but not both, and is maximal with respect to that property -- except possibly for an initial and/or a terminal sub-path that is  contained in $X$.  
	We will speak of $\cS$-syllables and $\cT$-syllables with the obvious meaning.   The {\em syllable length} $\text{SL}(P)$ of a path $P$ is defined to be the number of syllables into which it can be subdivided.   

\smallskip  
We will also use another measure of length: the {\em $\cS\cup\cT$ length} of a path in $Y^{(1)}$ is the number of occurrences of edges from $\cS^{\pm1}\cup\cT^{\pm1}$.

\subsection{Strong reduction}

An edge-path in $X^{(1)}\cup\cS\cup\cT$ is said to be {\em strongly reduced} if it is reduced and has no subpath of form $u^\varepsilon\cdot\g\cdot u^{-\varepsilon}$ with $u\in\cS\cup\cT$, $\varepsilon=\pm1$, and $\g$ a closed path in $X^{(1)}$ that is nullhomotopic in $X$. Every path in $X^{(1)}\cup\cS\cup\cT$ is homotopic rel end-points in $X\cup\cS\cup\cT$ to a strongly reduced path.  A closed path in $X^{(1)}\cup\cS\cup\cT$ is {\em strongly cyclically reduced} if each of its cyclic subpaths is strongly reduced.  Every closed path in $X^{(1)}\cup\cS\cup\cT$ is freely homotopic in $X\cup\cS\cup\cT$ to a strongly cyclically reduced path.
For example,  the closed path $\partial\a$ in $X^{(1)}\cup\cS\cup\cT$ is strongly cyclically reduced.

\medskip
	
	We introduce a move on rectangular and annular relative pictures called {\em boundary surgery}. 
	If there are two adjacent ends of arcs on the top or bottom boundary with  the same label, opposite orientations, and  separated on the boundary by a path labelled by a nullhomotopic closed path in $X$, 
	then remove small neighbourhoods of the endpoints of the arcs, and extend the remaining parts of the arcs to join in the picture.
	The inverse operation can be realised when there is a path in pictures from an interior point of an arc to the top or bottom boundary meeting no other arc (or vertex). The original arc is extended along both sides of the path to introduce two new
	endpoints of arcs on the boundary.   
    Throughout the paper, the term boundary surgery will refer to any combination of moves of these two forms. 

We shall suppose that our annular pictures have 
	strongly reduced boundaries. 
For an annular  picture it may be that the top or bottom label is not strongly cyclically reduced, 
even if it was formed from a 
rectangular picture with strongly reduced labels on each side.  
We may again perform boundary surgeries on such a picture.   
If the original picture represented a conjugacy equation   $U\sim QVQ^{-1}$
with $U$ based at $v_0$ and
$V$ based at $v_1$   
then the surgered version will represent a slightly different conjugacy equation 
 $U'\sim Q'V'{Q'}^{-1}$.  
Here  $U'$ 
has the form $U'=\sigma U\sigma^{-1}$  with 
$U'$ a loop based at a vertex $v_0'$ and  $\sigma$ a path in $X^{(1)}\cup\cS$  
from $v_0'$   to $v_0$.     
Similarly $V'=\tau^{-1}V\tau$ with $V'$ a loop in $X^{(1)}\cup\cT$ based at a vertex $v_1'$
and   $\tau$ a path in $X^{(1)}\cup\cT$ 
from  $v_1$   to $v_1'$.
And $Q'$ is the path $\sigma Q\tau$ in $Y^{(1)}$ from $v'_0$ to $v'_1$.

Abusing notation, we shall write $Q\pi_1(X\cup\cT,v_1)Q^{-1}$ for the subgroup of $\pi_1(Y,v_0)$
of the homotopy classes $[Q\gamma Q^{-1}]$ for all $[\gamma]\in\pi_1(X\cup\cT,v_1)$.

\subsection{Brodski\u\i's Theorem(s) for $2$-complexes}

We reformulate  Theorems \ref{BrodskyLemma}   and \ref{BrodskyTorsion}  for 2--complexes as follows:

\begin{lem}[Brodski\u\i's Theorem for complexes]\label{Br}
	Let $\mathcal{Y}$ be  a  $2$-complex of the form $\mathcal{Y}=X\cup E\cup \a$	 
	for some non-empty set $E$ of $1$-cells and $\a$ a $2$-cell, 
	where each component of $X$ has locally indicable  (possibly trivial) fundamental group.  
	Assume that $\partial\a$ is not freely homotopic in $X\cup E$ 
	to a path in $X$.   
	Suppose that $Q$ is an edge-path in $\mathcal{Y}$ from a $0$-cell $*_0$ to a $0$-cell $*_1$, 
	and that $H_0<\pi_1(X,*_0)$, $ H_1< \pi_1(X,*_1)$ are 
	subgroups such that $H_0=Q  H_1Q^{-1}$ in $\pi_1(\mathcal{Y},*_0)$,  
	\begin{itemize}
		\item $ H_0$ is non-cyclic; or
		\item $\partial\a$ is freely homotopic in $X\cup E$ to a proper power, and $H_0$ is non-trivial.
	\end{itemize} 
	Then $Q$ is homotopic rel endpoints in $\mathcal{Y}$ to an edge-path  $Q_0$ in $X$  
	such that $H_0=Q_0 H_1 Q_0^{-1}$ in $\pi_1(X,*_0)$.
\end{lem}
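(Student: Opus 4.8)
The plan is to recognise $\pi_1(\mathcal Y,*_0)$ as a one--relator product of locally indicable groups and then invoke the group--theoretic statements (\ref{BrodskyLemma}) and (\ref{BrodskyTorsion}) directly. Choosing a maximal tree in the connected complex $X\cup E$, one writes $\pi_1(X\cup E)\cong(\ast_{\lambda}G_\lambda)\ast F$, where the $G_\lambda$ are the fundamental groups of the components of $X$ and $F$ is a free group; realising $F$ as a free product of infinite cyclic (hence locally indicable) groups exhibits $\pi_1(\mathcal Y)=\bigl((\ast_\lambda G_\lambda)\ast F\bigr)/\<\<R\>\>$ as a one--relator product of locally indicable groups with relator $R=\partial\a$. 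The hypothesis that $\partial\a$ is not freely homotopic in $X\cup E$ to a path in $X$ says precisely that $R$ is not conjugate into any $G_\lambda$; if in addition $R$ were conjugate into a cyclic factor of $F$ (the only remaining way to have free product length at most $1$), then $\pi_1(\mathcal Y)$ would itself be a free product and the conclusion would follow from the conjugacy theory of free products, so I may assume $R$ is cyclically reduced of length at least $2$ and the setting of Theorem \ref{main} applies.

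Next I would organise the conjugation at the level of base points. Let $\lambda_0,\lambda_1$ index the components $X_{\lambda_0},X_{\lambda_1}$ of $X$ containing $*_0,*_1$, so that $H_0\le G_{\lambda_0}$ and $H_1\le\pi_1(X_{\lambda_1},*_1)$. Fixing a base path $t$ from $*_0$ to $*_1$ in the maximal tree, set $G_{\lambda_1}:=[t]\,\pi_1(X_{\lambda_1},*_1)\,[t]^{-1}\le\pi_1(\mathcal Y,*_0)$ and $g:=[t\,Q^{-1}]\in\pi_1(\mathcal Y,*_0)$. A direct computation gives $Q\,\pi_1(X_{\lambda_1},*_1)\,Q^{-1}=g^{-1}G_{\lambda_1}g$, so the hypothesis $H_0=QH_1Q^{-1}$ yields
$$H_0\le G_{\lambda_0}\cap g^{-1}G_{\lambda_1}g .$$
Since $H_0$ is non--cyclic (respectively non--trivial, in the case where $\partial\a$ is freely homotopic to a proper power, i.e.\ $R=Z^m$ with $m\ge2$), this intersection is non--cyclic (respectively non--trivial).

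Theorem \ref{BrodskyLemma} (respectively Theorem \ref{BrodskyTorsion}) now forces $\lambda_0=\lambda_1=:\lambda$ and $g\in G_\lambda$. In particular $*_0$ and $*_1$ lie in the common component $X_\lambda$, so $t$ may be chosen to be a path in $X_\lambda$; unwinding $g=[t\,Q^{-1}]\in G_\lambda=[t]\,\pi_1(X_\lambda,*_1)\,[t]^{-1}$ then gives $[Q]=[t]\,[w]^{-1}$ for some loop $w$ in $X_\lambda$ based at $*_1$, whence $Q$ is homotopic rel endpoints in $\mathcal Y$ to the path $Q_0:=t\,w^{-1}$ lying entirely in $X_\lambda$. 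Finally, both $H_0$ and $Q_0H_1Q_0^{-1}$ are subgroups of the image of $\pi_1(X_\lambda,*_0)$ and are equal in $\pi_1(\mathcal Y,*_0)$; since the Freiheitssatz (Theorem \ref{Freiheitssatz}) makes $\pi_1(X_\lambda)\hookrightarrow\pi_1(\mathcal Y)$ injective, the equality $H_0=Q_0H_1Q_0^{-1}$ already holds in $\pi_1(X,*_0)$, as required.

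I expect the main obstacle to be purely bookkeeping: translating faithfully between the topological data (edge--paths, homotopy rel endpoints, the components of $X$) and the element $g$ of $\pi_1(\mathcal Y)$ to which the algebraic Brodsky theorems apply, and in particular verifying that the conclusion $g\in G_\lambda$ really does force $Q$ to homotope into $X$ rather than merely into a coset. The remaining subtlety is confirming that the hypothesis on $\partial\a$ genuinely secures relator length at least $2$, which is why the degenerate free--product case is isolated at the outset.
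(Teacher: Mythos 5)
Your proof is correct and takes essentially the same route as the paper's: choose a spanning tree to present $\pi_1(\mathcal Y,*_0)$ as a one-relator product of the component groups $G_\lambda$ and a free group, convert $Q$ into a group element $g$ via tree paths, apply Theorem \ref{BrodskyLemma} (resp.\ Theorem \ref{BrodskyTorsion}) to force $\lambda_0=\lambda_1$ and $g\in G_{\lambda_0}$, homotope $Q$ into $X$, and conclude with the Freiheitssatz. Your explicit isolation of the degenerate case where $R$ is conjugate into a cyclic factor of $F$ (settled by free-product conjugacy) is a small extra care that the paper's proof passes over silently, but it does not change the argument.
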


Note: the conditions imply that at least one component  of $X$ has a fundamental group 
that is non-trivial
(indeed  in the torsion-free case it contains a non-cyclic subgroup) and the conclusion implies that $*_0$ and $*_1$ are contained in the same
component of $X$.

\begin{proof}
	We  work throughout within the component of $\mathcal{Y}$ that contains $Q$, so without loss of generality we may assume that $\mathcal{Y}$ is connected. 
	
	Let  $X_{\lambda}$ ($\lambda\in\Lambda$) be the components of $X$. 
	For each $\lambda\in\Lambda$ choose a base-vertex $v_\lambda\in X_{\lambda}$ and a spanning tree $T_\lambda$ in
	 $X_{\lambda}$.   
	 The forest $\cup_\lambda T_\lambda$ contains all the $0$-cells of $\mathcal{Y}$, so can be extended to a spanning tree $T$ for $\mathcal{Y}$ by adding a subset of  $E$.
	
	Let $P_\lambda$ ($\lambda\in\Lambda$) and $P_\a$ denote respectively the unique geodesics in $T$ from $*_0$ to $v_\lambda$ and from $*_0$ to the initial point in $\mathcal Y$ of the closed path $\partial\a$. Then $\pi_1(\mathcal{Y},*_0)$ is the one-relator product 
	$$\frac{\left(\ast_\lambda G_\lambda\right)*F}{\<\<R\>\>}$$
	of the locally indicable groups $G_\lambda:=P_\lambda\cdot\pi_1(X,v_\lambda)\cdot P_\lambda^{-1}$ and the free group $F$ on $E\setminus E(T)$, where $R$ is the group element represented by $P_\a\cdot\partial\a\cdot P_\a^{-1}$.
	
	Let $\lambda(0),\lambda(1)\in\Lambda$ denote the indices such that $*_j\in X_{\lambda(j)}$ for $j=0,1$.
	Since $P_{\lambda(0)}$ is contained in $X_{\lambda(0)}$ it follows that $G_{\lambda(0)}=\pi_1(X,*_0)$.
	Similarly, if $P_*$ is the geodesic in $T_{\lambda(1)}$ from $*_1$ to $v_{\lambda(1)}$, then 
	$$\pi_1(X,*_1)=P_*\cdot\pi_1(X,v_{\lambda(1)})\cdot P_*^{-1}=P_*\cdot P_{\lambda(1)}^{-1}\cdot G_{\lambda(1)}\cdot P_{\lambda(1)}\cdot P_*^{-1}.$$
	
	Since $H_0<\pi_1(X,*_0)$ and $H_1<\pi_1(X,*_1)$ are non-cyclic   -- or, in the torsion case, non-trivial --
	it follows that the intersection of $G_{\lambda(0)}$ and $gG_{\lambda(1)}g^{-1}$ 
	in   $\pi_1(\mathcal{Y},v_0)$ is non-cyclic  (resp. non-trivial), 
	where $g\in\pi_1(\mathcal{Y},*_0)$ is the group element represented by the closed edge-path $Q\cdot P_*\cdot P_{\lambda(1)}^{-1}$.
	By Brodski\u\i's original version from \cite{B81}, Theorem \ref{BrodskyLemma} 
	(resp. the torsion version from \cite{HS}, Theorem \ref{BrodskyTorsion}),
	it follows that $\lambda(1)=\lambda(0)$ and $g\in G_{\lambda(0)}$.  
	It follows that $P_*$ and $P_{\lambda(1)}$ are paths in $T_{\lambda(0)}\subset X$; 
	and that $Q\cdot P_*\cdot P_{\lambda(1)}^{-1}$ is 
	homotopic rel basepoint in $\mathcal{Y}$ to a path $P'$ in $X$ representing $g$.  Hence $Q$ is homotopic rel endpoints in $\mathcal{Y}$ to the path $Q_0:=P'\cdot P_{\lambda(1)}\cdot P_*^{-1}$ in $X$.   
	Finally, by the Freiheitssatz,  Theorem \ref{Freiheitssatz}, 
	$H_0=Q_0 H_1Q_0^{-1}$ in $\pi_1(X,*_0)$ as required.
\end{proof}

\subsection{Stallings graphs} 
For the proof of Theorem \ref{main} 
we are going to examine rank $2$ 
free subgroups $K<\pi_1(X\cup\cS,v_0)$ and $L<\pi_1(X\cup\cT,v_1)$ that are conjugate in $G$.

Recall from the Reformulation in \S \ref{refandreds} 
 that the boundary of the added 2--cell $\alpha$ is attached along  a path 
$R=Z^m$ and $Z=u_1x_1u_2x_2\dots u_kx_k$ where each $u_i$ is $e^{\pm1}$ for some edge $e\in\cS\cup\cT$, 
and each $x_i$ is an edge-path   of length  $1$ in (some component of) $X$.

Then any basis for $K$ can be expressed as a pair of based, reduced closed paths   $\{U_1,U_2\}$ in ${X}^{(1)}\cup\cS$. This gives two maps of circles into $X\cup\cS$, which can be decomposed into
1-complexes with edges labelled by  the images in $X^{(1)}\cup \cS$, and base-vertices mapping to $v_0$.
Identify base-vertices as $*$ and fold edges with the same image to make the map an immersion.
The resulting labelled graph we shall call the {\em Stallings graph} $St(U_1,U_2)$ (or $St(K)$), and the resulting immersion is denoted $\iota_K:(St(U_1,U_2),*) \to (X\cup\cS,v_0) \subset (Y,v_0)$.
We shall suppose that the Stallings graphs considered here are more than strongly reduced: 
if the image of a   non--empty reduced path  in $St(U_1,U_2)$   is a closed path in $X^{(1)}$ 
then this loop is not nullhomotopic in $X$.  
(If  a path contradicting this property exists, we can identify its endpoints and then remove one of its edges, 
to obtain a smaller graph with different $U_1, U_2$, but generating the same subgroup.) 
Also remove free edges that do not contain the base point.
As there only finitely many edges in the graph, after finitely many such moves the required property will be satisfied.
We will later make implicit use of this feature by assuming that the top and bottom labels of the pictures that we study are strongly reduced.

Similarly there is an immersion $\iota_L:(St(V_1,V_2),*') \to (X\cup\cT,v_1)\subset (Y,v_1)$.

This is  a slight generalisation of a construction of Stallings \cite{St},
where he showed 
that every finitely-generated subgroup $H$ of a free group $F$
on a given basis $\mathcal{Y}$
can be realised by an immersion of a graph to the rose with petals labelled by the elements of 
$\mathcal{Y}$.

Recall that a finite 
connected graph is a {\em core graph} if it has no vertices of degree $1$. 
Every finite connected graph $\G$ has a unique core subgraph. 
Stallings graphs are not in general core graphs.  
However, if $St(U_1,U_2)$ is not a core graph, then the only vertex of degree $1$ in $St(U_1,U_2)$ 
is the base-point $*$;  
let $\g$ in $St(K)$ be the path from its base point to a vertex in the core. 
   Then $Q_0:=\iota_K(\g)$ is  a path in $X^{(1)}\cup\cS$ from $v_0$ to a vertex $v'_0$.  Replace $St(K)$ by its core and $Q$ by $Q_0^{-1}Q$ in the theorem.  
   Similarly $V_1,V_2$ give a Stallings graph $St(L)$ which we may assume is core, 
   and an immersion $\iota_L:St(L)\to Y$ based at $v_1$.
   Thus up to replacing $Q$ by a path  $Q_0^{-1}QQ_1$ with $Q_0$ in $X\cup \cS$ and $Q_1$ in $X\cup\cT$,
   we can assume that the Stallings graphs are core.

\subsection{Orderings}\label{ords}

Recall from  \S \ref{refandreds} that $Y=X\cup\cS\cup\cT\cup\alpha$, where
the closed path  $R$  identified with $\partial\alpha$ can be written  $R=Z^m=(u_1x_1u_2x_2\dots u_{k}x_k)^m$,
and $Z$ is a cyclically reduced loop at $*$,   not a proper power in   $\pi_1(X\cup\cS\cup\cT)$  and not contained in $X$.
Then $\ol{G}:=\pi_1(X\cup\cS\cup\cT\cup\alpha',*)$ where $\alpha'$ is a 2-cell attached along the loop $Z$,
is locally indicable (Theorem \ref{Gli}), and hence right orderable (Theorem \ref{lilo}).   
 Choose a right ordering $<$ on $\ol{G}$.

\smallskip
For each  $u\in(\cS\cup\cT)^{\pm1}$, choose a path $\tau(u)$ in $Y^{(1)}$ 
from the initial point of $u$ to the base point $*$ of $Y$.
(For instance choose a maximal tree in $Y^{(1)}$ and a path therein.)
For each $j=1,\dots k$, define $z(j) = u_1x_1\dots x_{j-1}$ and $\tau_j=\tau(u_j)$ if
$u_j\in\cS\cup\cT$, and $z(j) = u_1x_1\dots x_ju_j$ and   $\tau_j=\tau(u_j^{-1})$ if $u_j\in\cS^{-1}\cup\cT^{-1}$.
The path $z(j)\tau_j$ is  a closed path in $Y$ based at $*$ : let $g_j\in\pi_1(Y,*)$
be the corresponding group element.
For each $u\in \cS\cup\cT$ consider the occurrences of $u^{\pm1}$ in $Z$ :  
let $\text{Ind}(Z,u)=\{j \mid  u_j =u^{\pm1}\}$.
Thus if $i,j\in \text{Ind}(Z,u)$  
then $\tau_i=\tau_j$ and $ z(j)\tau_j(z(i)\tau_i)^{-1}$ reduces to $z(j)z(i)^{-1}$,
a loop in $Y$ based at  $*$, and the corresponding group element is $g_jg_i^{-1}$.
The occurrence $u_{max}$ is at $j\in \text{Ind}(Z,u)$ when $g_j\ge g_i$ for all $i\in \text{Ind}(Z,u)$.
The occurrence is unique by Weinbaum's Theorem \ref{Weinbaum} as $g_j=g_{j'}$ with $j,j'\in \text{Ind}(Z,u)$
implies that a subword of $Z$ corresponding to  $g_j^{-1}g_{j'}$ is trivial.
Similarly there is a unique occurrence $u_{min}$ of each $u\in\cS\cup\cT$, with the obvious meaning.   
Note that each of $u_{max},u_{min}$ is repeated precisely $m$ times in $R=Z^m$.

Observe that in a picture over $Y$, if the $u_{max}$ arc of one $\alpha$ disc arrives as a $u_{max}$ arc on 
another $\alpha$ disc, then the two discs cancel.   
An analogous remark holds for the $u_{min}$ arc.

The right ordering on $\ol G$  induces a pre-ordering on $\a$-cells 
in any 
relative  picture $P$ over $Y$ 
 on a disc:  
$\b_1\le\b_2$ if the label on some (and hence any) path in the picture  
from the base-point of $\b_1$ to that of $\b_2$,  is $\ge 1$ in $\ol{G}$.
Note that if a $u_{max}$  arc of the $\a$-cell $\beta_1$ has its other endpoint on the
$\a$-cell $\beta_2$ as $u_i$, then  $\beta_1 \le \beta_2$:
there is a path $z(j)z(i)^{-1}$ with $u_j$ the occurrence of $u_{max}$, 
from the base point of $\beta_1$ to the base point of $\beta_2$, and 
$g_{j} g_i^{-1} \ge 1 \iff g_j \ge  g_i$, which is the case at $u_{max}$.

Alternatively, lift $P$ to the regular cover of $Y$ with deck transformation group $\ol{G}$. If $\b_1,\b_2$ lift to $\wh\b_1,\wh\b_2$ respectively, then $\b_2=\gamma(\b_1)$ for a unique $\gamma\in\ol G$.  We say that $\b_1<\b_2$ if $\gamma>1$ in $\ol G$.  

When we speak of minimal or maximal cells in a 
disc-picture, we mean with respect to this pre-order.

\smallskip
	The following lemma will be used in the proofs  of   our main results.

\begin{lem}\label{l:1}
	Let $\G$ be a reduced relative picture   on a disc $D$
	with more than one   $\a$-disc 
	over $Y = X\cup \cS\cup\cT\cup\alpha$. 
	Then there are two  $\a$-discs 
	$\b_1,\b_2$ in $\G$, each of which   is joined to the boundary of $D$ (only)  
	by a sequence of consecutive arcs 
	which contains 
	either all of its $u_{min}$   arcs   for every $u\in\cS\cup\cT$  
	or all of its $u_{max}$  arcs for every $u\in\cS\cup\cT$.  
\end{lem}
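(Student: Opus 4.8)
The plan is to exploit the right-ordering on $\ol G$ via the induced pre-order on $\a$-discs, and to locate the two required discs as (roughly) a maximal and a minimal disc in the picture. Let me first consider a globally maximal $\a$-disc $\b_2$ with respect to the pre-order (which exists since the pre-order is defined through a right-ordered group and the picture is finite). The key observation, recorded in \S\ref{ords}, is that if the $u_{max}$ arc of some disc $\b$ runs to another disc $\b'$, arriving at an occurrence $u_i$, then $\b\le\b'$; dually, if the $u_{min}$ arc of $\b$ runs to $\b'$ then $\b'\le\b$. I would use these two facts as the engine of the argument.

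First I would analyse a maximal disc $\b_2$. Consider any one of its $u_{max}$ arcs, for some $u\in\cS\cup\cT$. If this arc ran to another $\a$-disc $\b'$, then by the observation above we would have $\b_2\le\b'$; and since it arrives at $\b'$ we must check whether it arrives as a $u_{max}$ arc (in which case $\b_2$ and $\b'$ cancel, contradicting that $\G$ is reduced) or at some non-maximal occurrence $u_i$, in which case the strict comparison forces $\b_2<\b'$, contradicting maximality of $\b_2$. Hence every $u_{max}$ arc of $\b_2$, for every $u\in\cS\cup\cT$, must terminate on the boundary of $\Sigma$ rather than on another disc. The one subtlety is that $\b_2$ may be only locally, not strictly globally, maximal, so I would instead take $\b_2$ to be maximal and argue that equality $\b_2=\b'$ is excluded by Weinbaum's Theorem \ref{Weinbaum} exactly as in \S\ref{ords}, so the strict inequality and the cancellation both yield contradictions. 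This gives a disc all of whose $u_{max}$ arcs (over all $u$) reach the boundary. By the same argument applied to a minimal disc $\b_1$, all of its $u_{min}$ arcs reach the boundary.

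The remaining work is to arrange these boundary-reaching arcs into a \emph{single consecutive sequence} that is the only connection of $\b_i$ to the boundary, as the statement demands (\emph{joined to the boundary only by a sequence of consecutive arcs}). Here I would read the arcs incident to $\b_2$ in their cyclic order around $\partial\b_2$. Since $R=Z^m$, the boundary label of $\b_2$ is $Z^m$, and the arcs labelled by a fixed $u$ at its $u_{max}$ occurrence appear $m$ times, once per $Z$-block; more usefully, within a single copy of $Z$ the various $u_{max}$ occurrences for different $u\in\cS\cup\cT$ need not be adjacent. I therefore expect the genuine content to be a combinatorial argument showing that one can \emph{choose} the disc so that its maximal (resp.\ minimal) arcs that escape to the boundary in fact occupy a consecutive arc of $\partial\b_i$ cut off by $\Sigma$. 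The natural mechanism is an innermost/outermost argument: among all discs whose maximal arcs reach the boundary, pick one that is extremal for the pre-order, so that no disc lies ``between'' it and the boundary along those arcs; then the arcs escaping to $\partial\Sigma$ must bound a common complementary region, forcing consecutiveness.

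\textbf{Main obstacle.} The hard part will be precisely this consecutiveness and the \emph{``only''} clause — passing from ``all $u_{max}$ arcs hit the boundary'' (an easy consequence of maximality plus reducedness and Weinbaum) to ``there is a single contiguous block of arcs realising the connection to $\partial\Sigma$, and no other arcs of $\b_i$ meet the boundary.'' This requires controlling the planar/annular combinatorics of how the escaping arcs interleave with arcs running to other discs, and ruling out the situation where boundary-bound arcs and disc-bound arcs alternate around $\partial\b_i$. I would expect to handle it by an extremality (innermost-disc) selection together with a Jordan-curve separation argument in the simply connected surface $\Sigma$, using that the $u_{max}$ arcs, being extremal, cannot be separated from $\partial\Sigma$ by any arc running to a strictly larger disc.
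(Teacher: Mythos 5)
Your first half is correct and coincides with the paper's own observations: by Weinbaum's Theorem (\ref{Weinbaum}) the $u_{max}$ and $u_{min}$ occurrences are unique, so an extremal disc is locally extremal, and reducedness then forces all of its $u_{min}$ (resp.\ $u_{max}$) arcs, for every $u\in\cS\cup\cT$, to end on the boundary. But the part you flag as the ``main obstacle'' is indeed the entire content of the lemma, and your proposed mechanism for it does not work. A globally minimal disc $\beta$ genuinely can have its boundary-bound $u_{min}$ arcs interleaved around $\partial\beta$ with arcs running to other discs, and no cleverer extremal selection or Jordan-curve argument removes this possibility: there is no reason why \emph{any} disc whose min (or max) arcs all reach the boundary should have them in a single consecutive block, so ``pick an extremal one among these'' has no purchase.

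The paper's resolution is structurally different and explains why the lemma asserts \emph{two} discs: it is an induction on the number of $\alpha$-discs. If the minimal disc $\beta$ fails consecutiveness, then $\beta$ together with its boundary-bound $u_{min}$ arcs separates the simply connected picture $\Gamma$ into subpictures $\Gamma_1,\Gamma_2$, each containing strictly fewer $\alpha$-discs; applying the inductive hypothesis to each $\Gamma_i$ yields a disc $\beta_i\ne\beta$ satisfying the conclusion, and the pair $\{\beta_1,\beta_2\}$ works. Note that the two discs produced this way are in general not a minimal and a maximal disc, and need not be of opposite types (each may independently be of ``min'' type or ``max'' type) --- your plan to realise them as one maximal and one minimal disc misreads the role the two discs play. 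So the gap is concrete: you lack the divide-and-conquer step, which is both the reason the statement is formulated with two discs and the only place where simple connectivity of $\Gamma$ is used.
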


\begin{proof}
	Let $\b,\b'$ be a minimal and a maximal $\a$-disc 
	respectively in $\G$.  
	If $\b,\b'$ satisfy the conclusion in the statement then we are done.  
	Otherwise at least one of them -- say $\b$ -- fails.   
	Since $\b$ is minimal, 
 none of its $s_{min}$ and $t_{min}$ arcs meet other $\a$-discs of $\G$,
	for all $s\in\cS, t\in\cT$.   
	And since $\G$ is reduced all of these arcs of $\b$ end on $\partial D$.
	Thus the sequence of arcs from $\b$ to the boundary is not consecutive, 
	and so  
		$\b$, together with its $s_{min},t_{min}$ arcs, divides $\G$ into at least two smaller  pictures.

	We have $\G=\G_1\cup\G_2$ with $\b\subset\G_1\cap\G_2$, where each of $\G_i$ has at least two $\a$-discs, but   fewer $\a$-discs than $\G$.  
	Inductively, we may assume that the result applies to each $\G_i$: 
	so $\G_i$ has an $\a$-disc  
	$\b_i\ne\b$ satisfying the conclusion of the Lemma.  
	Then the pair $\{\b_1,\b_2\}$ will do.
\end{proof}

\section{The Torsion Case: Proof of Theorem \ref{maintorsion}}\label{torsion}

	Let $Y=X\cup\cS\cup\cT\cup \alpha$ be as in the statement of 
 Theorem \ref{2ComplexVersion},
where the boundary of $\alpha$ is identified with the loop $R=Z^m$ in $Y^{(1)}$ and $m\ge 2$.
We show that if there are loops $U$ based at $v_0$ in $X\cup\cS$ and 
$V$ based at $v_1$ in $X\cup\cT$,
not nullhomotopic in $Y$, 
and a path $Q$ such that $U$ is homotopic to $QVQ^{-1}$ in $Y$,
then $Q$ is homotopic rel endpoints in $Y$ to a path $Q_1Q_2$ with $Q_1$ in $X\cup\cS$ and
$Q_2$ in $X\cup\cT$.

\smallskip
Let $\G^{rect}$ be a reduced rectangular relative picture representing 
a homotopy between $U$ and  $QVQ^{-1}$.  
Thus $\G^{rect}$ has both side labels $Q$, top label $U$ and bottom label $V$.   
Form an annular picture $\G^{ann}$ by identifying the two vertical sides of $\G^{rect}$ 
and then performing $\a$-cell cancellations and boundary surgery to ensure that $\G^{ann}$ is reduced 
and its top and bottom boundary labels are strongly cyclically reduced.

Now choose a path $\g$ between the two boundary components of the annulus that meets $\G^{ann}$ transversely and minimally.
In other words, $\g$ meets none of the $\a$-discs of $\G^{ann}$, and intersects the union of the $(\cS\cup\cT)$-arcs transversely in the fewest possible points.  
If $Q'$ is the path in $Y^{(1)}$ labelling $\g$, then there are paths $P_\cS,P_\cT$ in $X\cup\cS$ 
and $X\cup\cT$ respectively such that $Q'$ is homotopic rel endpoints to $P_\cS\cdot Q\cdot P_\cT$.  
 Hence we may assume that $Q=Q'$ and that $\G^{rect}$ can be recovered from $\G^{ann}$ by cutting the annulus along $\g$.

Note first that we may assume that there are arcs in $\G^{ann}$ 
going to each of the two boundary components of the annulus: if for example there are no arcs meeting the top boundary, 
then $Q$ is homotopic to a path in $X\cup\cT$ by Lemma \ref{Br}, since $U$ is not nullhomotopic in $X\cup\cS$.  
 Recall that the closed path $Z$ traverses an edge of $\cS\cup\cT$ precisely $k$ times, 
 so that each $\a$-disc in a picture is incident to  precisely $mk$ $(\cS\cup\cT)$-arcs.   
 For any integer $N>1$, the rectangular picture
$$N.\G^{rect}:=\G^{rect}+\cdots+\G^{rect}~~(N~\mathrm{terms}),$$
representing 
a homotopy between $U^N$ and  $QV^NQ^{-1}$, is reduced and has at least $N$ arcs going to each of the top and bottom boundaries.   Replacing $U$, $V$ and $\G^{rect}$ by $U^N$, $V^N$ and $N.\G^{rect}$ for sufficiently large $N$ (for example, $N>2mk$), it follows that $\G^{rect}$ has more than one $\a$-disc, and that each boundary component of the annulus meets arcs that go to neither of the $\a$-discs $\b_1,\b_2$ given by Lemma \ref{l:1}.   We use this fact to obtain a contradiction as follows.

For a chosen right ordering on $\ol{G}:=\pi_1(X\cup\cS\cup\cT)/\<\<Z\>\>$, 
there is a sequence of consecutive arcs from $\b_1$ to the boundary that contains either all its $u_{min}$ arcs for all $u\in\cS\cup\cT$, or all its $u_{max}$  arcs for all $u\in\cS\cup\cT$.  Assume the latter.   This sequence of arcs at $\b_1$ cannot contain all the top boundary arcs of $\G^{rect}$, nor all its bottom boundary arcs.  Hence it cannot contain arcs going to both the left and right sides.  Without loss of generality, assume that none of these arcs go to the right side.  

Without loss of generality we may also assume that $Z$ begins with a $\cS$-syllable and ends with a $\cT$-syllable, and 
thus the sequence of arcs spans a cyclic subpath of $\partial\a=Z^m$ of syllable length at least $(m-1)\text{SL}(Z)+2$
(reading from the first occurrence of $s_{max}$ with $s\in\cS$ the sequence contains $(m-1)\text{SL}(Z)+1$ syllables, and there is at least one more  $\cT$ 
syllable in the sequence). 
Moreover any subsequence going to the top (resp. bottom) boundary spans a subpath of a single $\cS$-syllable (resp. $\cT$-syllable) of $Z$.   
Thus the subsequence of consecutive arcs going from $\b_1$ to the left side of $\G^{rect}$ 
spans   at least 
$(m-1)\text{SL}(Z)$ complete  syllables of $R$, and hence contains at least $(m-1)k$ arcs.   
Now let $\g'$ denote the left side of the sub-picture obtained by removing $\b_1$ and its incident boundary arcs from $\G^{rect}$.   Then $\g'$ is a path between the two boundary components of $\G^{ann}$.  We claim that it has smaller transverse intersection with $\G^{ann}$ than $\g$, giving the required contradiction.

To see this, note that in replacing $\g$ by $\g'$, we replace the  transverse intersections of $\g$ with arcs going to the left side of $\G^{rect}$ (of which there are at least $(m-1)k$) by transverse intersections of $\g'$ with arcs from $\b_1$ that do not belong to the consecutive sequence going to the boundary of $\G^{rect}$ (of which there are at most $k-|\cS\cup\cT|$).  This proves the claim, and hence the result.
\hfill $\square$

\section{Plan of the proof of Theorem \ref{main}, and first steps} \label{plan} 

Recall from the Reformulation in \S \ref{prelims} that we have   a 2-complex of the form
$Y := X \cup\cS \cup\cT \cup\a$ where each component of $X$ has locally indicable fundamental group, 
$\cS$ and $\cT$ are two disjoint, non-empty sets of $1$-cells, 
and $\a$ is a $2$-cell attached along a strongly cyclically reduced closed path $R$ containing all the $1$-cells in $\cS\cup\cT$.  From now on we assume that we are in the torsion-free case, in which $R$ does not represent a proper power in $\pi_1(X\cup\cS\cup\cT)$ (so $G=\ol G$).
 
In this section we begin the proof of Theorem \ref{main}, in the form of the torsion-free case of Theorem \ref{2ComplexVersion}.   We first prove some preliminary results, then introduce an adjustment to the general form of a putative counterexample. This adjustment enables us to formulate an inductive process for the proof of the theorem.   Finally in this section we prove the initial case of the induction,  and explain how the argument for the inductive step splits into two separate cases. The first of these  will be treated in \S \ref{first} and the second in \S\S \ref{restrict}-\ref{finalcurtain}.

We define the  {\em complexity} of $Y$ to be $c(Y ) := k - \vert\pi_0(X)\vert$, where
$\pi_0(X)$ means the set of components of $X$.

We say that $Y$ satisfies the Collins property if 
there exists no non-trivial $(Y,Q)$-CCE.
Theorem \ref{main} (that is, the torsion-free part of Theorem \ref{2ComplexVersion}) can be reformulated then as:  
\begin{thm}  All 2--complexes  $Y$  as above satisfy the Collins property.
\end{thm}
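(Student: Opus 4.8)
The plan is to induct on the complexity $c(Y)=k-|\pi_0(X)|$, proving that no $2$-complex $Y$ of the stated form admits a non-trivial $(Y,Q)$-CCE. Suppose $(U_1,U_2,V_1,V_2)$ is such a counter-example, with $K=\<U_1,U_2\>$ a rank-$2$ free subgroup of the commutator subgroup of $\pi_1(Y,v_0)$. As in \S\ref{torsion}, I would first build a reduced annular relative picture $\G^{ann}$ for the conjugacy $U_i=QV_iQ^{-1}$, arrange its top and bottom labels to be strongly cyclically reduced, and fix a transverse path $\gamma$ between the two boundary circles meeting the $(\cS\cup\cT)$-arcs minimally. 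Lemma \ref{Br} lets me assume from the start that arcs run to \emph{both} boundary components, since otherwise $Q$ is already homotopic into $X\cup\cS$ or $X\cup\cT$ and the CCE is trivial. The initial step of the induction treats the smallest values of $c(Y)$, where I expect the rigidity of the picture, combined with Lemma \ref{Br}, Weinbaum's Theorem \ref{Weinbaum}, and the $u_{max}/u_{min}$ incidence relations, to force $Q$ into the desired concatenation directly.

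For the inductive step the crucial new device is to lift the annular picture to a cover $\wh Y\to Y$. The torsion counting of \S\ref{torsion} is unavailable here, because with the relator not a proper power the bound $(m-1)k$ that produced the contradiction collapses to $0$; so a bare minimal-transversality argument cannot close the gap. Instead, since $K$ lies in the commutator subgroup, the boundary loops $U_i,V_i$ all lift to loops and $Q$ lifts to a path, so the whole transverse picture lifts. I would choose the cover associated to a homomorphism of $\pi_1(Y)$ onto an abelian group, so that the single $2$-cell $\a$ unwinds into several lifted discs and each edge of $\cS\cup\cT$ splits into distinct lifted edges running between distinct sheets. The purpose of this unwinding is bookkeeping on complexity: an edge-path $x_i$ of $Z$ that revisits a component of $X$ downstairs will, generically, meet \emph{distinct} lifted components upstairs, so the sub-complex carrying one lifted relator together with the components of $\wh X$ that it touches has the same $k$ but strictly larger $|\pi_0|$, hence strictly smaller complexity. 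Projecting any splitting of the lifted path back down shows that a non-trivial lifted CCE of smaller complexity would persist, so the inductive hypothesis applies.

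The argument then splits according to how faithfully the cover unwinds the two extremal discs supplied by Lemma \ref{l:1}. In the first, easier case a single relabelled lifted disc $\b$ is incident (via its arc sequence) to all the edges occurring in the generators of $K$; here the configuration is rigid enough that one can either contradict the minimal transversality of $\gamma$ by surgering $\b$ away, as in \S\ref{torsion}, or split $Q$ outright. In the remaining, harder case the lifted relator returns to a repeated sheet and the combinatorics do not simplify cleanly; there I would (i) reduce to a single canonical conjugating element, (ii) show that the annular picture for that conjugacy is forced into a highly constrained \emph{special form} by the right-ordering on $\ol G$, and (iii) exploit that form to extract the final contradiction, concluding that no non-trivial $(Y,Q)$-CCE exists and hence that $Y$ has the Collins property.

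I expect the principal obstacle to be precisely this complexity bookkeeping for the cover: guaranteeing simultaneously that a lifted non-trivial CCE genuinely survives in the chosen sub-complex of $\wh Y$ and that $c$ strictly decreases, while controlling the degenerate configuration in which the lifted relator revisits the same sheet, so that $|\pi_0|$ fails to increase and a single lifted disc meets all the generator-edges. It is exactly this degenerate case, where the induction cannot be invoked, that will demand the delicate \emph{special form} analysis rather than a clean counting argument, and I anticipate that it, not the lifting itself, will contain the technical heart of the proof.
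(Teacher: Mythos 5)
Your skeleton is the paper's: induction on $c(Y)=k-\vert\pi_0(X)\vert$, lifting pictures to a $\Z$-cover (possible because $K$ lies in the commutator subgroup), a dichotomy according to how the lifts of $\a$ interact with the lifted generators, and a special-form endgame driven by the right order. But the first load-bearing step is missing, and your instinct about where the difficulty sits is misplaced. You cannot get the cover ``generically'': the existence of a suitable epimorphism $\Psi:\pi_1(Y)\onto\Z$ is exactly what the paper must manufacture, via Lemma \ref{Agenerates} (for a shortest $Q$, some CCE is carried entirely by $\Theta\cup\cS\cup\cT$, where $\Theta=\partial\a\cap X^{(1)}$), the deduction that each $\Theta_j$ is connected (otherwise splitting a component into copies yields a $(Y(1),Q)$-CCE of strictly smaller complexity, contradicting minimality), Adjustment $\Theta$ (replacing each $X_j$ by its $A_j$-cover so $\pi_1(X_j)$ is generated by $\Theta_j$-loops), and then a four-case analysis producing $\Psi$ so that \emph{every} lift of $\partial\a$ meets strictly more than $\vert\pi_0(X)\vert$ components of $p^{-1}(X)$. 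Your feared degeneracy --- the lifted relator revisiting a sheet so that $\vert\pi_0\vert$ fails to grow --- is thereby excluded at the level of the cover, before any picture is drawn. The genuine residual problem, which your plan never addresses, is different: the lifted pictures in general involve \emph{several} lifts of $\a$, so one cannot simply restrict to a one-relator subcomplex of $\wh Y$ and invoke the inductive hypothesis.

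Your resolution of the ``easier'' case would also fail as proposed. The surgery/minimal-transversality contradiction of \S\ref{torsion} rests on the sequence of consecutive boundary arcs spanning $(m-1)SL(Z)$ syllables, hence at least $(m-1)k$ arcs --- a count that collapses to $0$ when $m=1$, as you yourself note two paragraphs earlier; you cannot then reuse it. The paper instead proves an Iterated Brodsky Theorem (Lemma \ref{itBr}) and applies it to an ascending chain $Y_0\subset Y_1\subset\cdots\subset Y'$ obtained by adjoining the lifts $\a_{m_1},\dots,\a_{m_N}$ ordered by the $\Z$-grading; the hypothesis that all $\wh\cS$-edges of $\wh\iota_K(St(K))$ lie on a single $\partial\a_m$, together with the $\Z$-dominated right order, guarantees each stage adds fresh $\wh\cS$-edges, so Lemma \ref{Br} applies at every step and pushes $\wh Q$ into $Y_0$, which contains one lifted $2$-cell and has smaller complexity. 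For the hard case, your items (i)--(iii) restate the paper's announced roadmap but contain none of the machinery that makes it work: Lemma \ref{surlepont} producing $G$-min and $G$-max up-down connections, the cut reducing to $Q=W_1$ with the normalization \eqref{W1least}, the structure Lemmas \ref{l:3}--\ref{newlemma:1} forcing the form $\pm(\G_{min}+\D+\G_{max})$ under \eqref{W1leastplus} (with Corollary \ref{W1W2} disposing of $W_1=W_2$), and the final double-minimality argument using the superadditive count $\ell(P_1+P_2)\ge\ell(P_1)+\ell(P_2)$ to reach the contradiction $\ell(P_1)\ge\ell(P_3)+\ell(P_2)$. As it stands, your text is a plausible plan whose two critical steps --- the construction of the cover with guaranteed complexity drop, and the treatment of pictures involving several lifted relator discs --- are respectively absent and unworkable.
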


The proof is by induction on $c(Y)$, and breaks down into several cases. 
Suppose that there is a $(Y,Q)$-CCE with $Y$ of smallest complexity, 
and for this $Y$, $Q$ is a shortest (w.r.t. $\cS\cup\cT$ length) path giving a $(Y,Q)$-CCE. 
The aim is to factor
nullhomotopies through a space of smaller complexity in order to apply
the inductive hypothesis. 
This space will be contained in a $\Z$-cover,
corresponding to some epimorphism onto $\Z$. 
In order to ensure the
existence of a suitable epimorphism – and hence $\Z$-cover, we  make
 adjustments to the complex $Y$. 
In order to justify these, we need the following result.
 
	\begin{lem}\label{Agenerates} 
		Let $Q$ be a shortest (w.r.t. $\cS\cup\cT$ length) path in $Y$ 
		so that there is a non-trivial $(Y,Q)$-CCE.
		Then there is a non-trivial $(Y,Q')$-CCE  $(U_1',U_2',V_1',V_2')$ 
		with connected reduced rectangular relative pictures $P_i'$ for $U_i'\sim Q'V_i'{Q'}^{-1}$, $i=1,2$.
	\end{lem}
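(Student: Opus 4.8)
The plan is to take the given reduced rectangular relative pictures $P_i$ and homotope their boundaries into $\Theta\cup\cS\cup\cT$ by sweeping across the complementary regions, exploiting the fact that the $2$-cell $\a$ meets $X$ only in $\Theta$. First I would record the structural features that make this possible. Since $R=u_1x_1\dots u_kx_k$ with each $x_i$ a path in $\Theta$, the boundary of every $\a$-disc of $P_i$ meets $X$ only in $\Theta$: between two consecutive $\cS\cup\cT$-arcs it is labelled by some $x_i$. Moreover each complementary region $\Delta$ (a component of $P_i$ with the $\a$-discs and a neighbourhood of the arcs removed) maps into a single component $X_\lambda$ of $X$, and reading around $\partial\Delta$ gives a closed path in $X_\lambda^{(1)}$ that is nullhomotopic in $X_\lambda$; its subpaths lying on $\a$-discs are in $\Theta$, while the remaining subpaths lie on $\partial P_i$, that is, they are maximal $X$-subpaths of $U_i$, $V_i$ or $Q$.

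The basic move is a homotopy across a region. Suppose $w$ is a maximal $X$-subpath of $U_i$ (or $V_i$, or $Q$) containing an edge of $X\setminus\Theta$, and let $\Delta$ be a region bounded by $w$. If $\Delta$ meets $\partial P_i$ only along $w$, then the rest of $\partial\Delta$ consists of $\a$-disc segments (labelled in $\Theta$) joined by arcs, so the nullhomotopy of $\partial\Delta$ inside $X_\lambda$ exhibits $w$ as homotopic rel endpoints, in $X_\lambda$, to a path in $\Theta$. Pushing $\partial P_i$ across $\Delta$ replaces $w$ by this $\Theta$-path, strictly reduces the number of occurrences of edges of $X\setminus\Theta$ on the boundary, and leaves the $\cS\cup\cT$-arc pattern (hence the $\cS\cup\cT$-length of each side) unchanged. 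Since each such homotopy is rel endpoints, it does not change the classes of $U_i,V_i$ in $\pi_1$, so $\langle U_1,U_2\rangle$ is unchanged and remains a rank $2$ free subgroup of the commutator subgroup; it alters $Q$ only within its rel-endpoints homotopy class, preserving both its $\cS\cup\cT$-length (so it stays shortest) and the non-triviality of the CCE.

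Two routine adjustments fit into this scheme. To arrange $v_0\in\Theta$ I would conjugate $U_1,U_2$ and adjust $Q$ so as to base them at the $\Theta$-vertex at the end of some $\cS$-arc occurring in $U_1$ or $U_2$; such an arc exists, for otherwise $U_1,U_2$ would lie in a single component of $X$ and the CCE would be trivial by Reduction $2$ and Lemma \ref{Br}. The same applies to $v_1$ using a $\cT$-arc of $V_1$ or $V_2$, and these conjugations preserve all the CCE conditions. To keep the two pictures compatible, I would perform the homotopies of $Q$ using the side-regions of $P_1$ first, redraw $P_2$ with the new side label (legitimate since $U_2=QV_2Q^{-1}$ is a homotopy identity), and only then clean up $U_i,V_i$ using regions adjacent to the top and bottom of each $P_i$, which do not disturb the sides.

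The main obstacle is the case of a \emph{separating} region, one meeting $\partial P_i$ in two or more distinct $X$-segments: for such a region the nullhomotopy of $\partial\Delta$ expresses one boundary segment in terms of the others together with $\Theta$-paths, not in terms of $\Theta$ alone, so a single push need not land in $\Theta$. Here I would induct on the number of $\a$-discs of $P_i$: a separating region cuts the rectangle along arcs of $\partial\Delta$ into smaller rectangular relative pictures, each still having all its $\a$-discs incident only to $\Theta$- and $\cS\cup\cT$-labels, and one applies the statement to the pieces and reassembles. The minimality of $Q$ is what rules out the dangerous configurations in which a separating region meets both vertical sides, since such a region would yield a homotopy shortening the $\cS\cup\cT$-length of $Q$, contradicting its choice. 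Verifying that the cutting can always be arranged to respect reducedness and the basepoint conventions, and that the reassembled pictures are again reduced, is the technical heart of the argument.
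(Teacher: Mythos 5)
Your overall sweeping strategy is genuinely the same as the paper's (locate the first $X$-segment of $Q$ not homotopic rel endpoints into $\Theta$, find a properly embedded curve in the adjacent region with label in $\Theta$, and do a case analysis on where its other end lands), but your treatment of the one genuinely dangerous case is wrong. A separating region meeting both vertical sides does \emph{not} yield a shortening of $Q$: cutting along a curve $\d$ running from the segment $a_j$ on the left side to the corresponding segment on the right side produces a subpicture showing only that the initial segment $Q'=t^{\epsilon_1}a_1u_1\cdots a_{j-1}u_{j-1}$ of $Q$ conjugates $U_i$ into the component $X_{j'}$ --- a conjugacy statement, not a reduction in $\cS\cup\cT$-length, so minimality of $Q$ is not directly contradicted. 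The paper's key move, which your proposal lacks, is to observe that this configuration occurs simultaneously in \emph{both} pictures $P_1$ and $P_2$ (for the same first bad index $j$), so that $Q'$ conjugates the entire rank $2$ free group $K=\<U_1,U_2\>$ into $\pi_1(X_{j'})$; only then can Lemma \ref{Br} be invoked --- its torsion-free form requires a \emph{non-cyclic} subgroup --- to replace $Q'$ by a path in $X\cup\cS$, after which the $\cS\cup\cT$-length of $Q$ genuinely drops and minimality is contradicted. Your picture-by-picture induction cannot make this step: a single picture certifies only that $Q'$ conjugates the single element $U_i$, i.e.\ cyclic data, to which Lemma \ref{Br} does not apply.

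Two further defects. First, the proposed induction (``a separating region cuts the rectangle into smaller rectangular pictures, apply the statement to the pieces and reassemble'') is not well-founded: the lemma's hypothesis concerns the globally minimal conjugator $Q$, and the pieces produced by cutting no longer have side label $Q$, nor need their side labels be minimal among paths admitting non-trivial CCEs, so there is no instance of the statement to apply to them. The paper uses no such induction; it argues purely by contradiction from the minimal choice of $Q$, with each terminal case of the curve ($\d$ ending on the top, bottom, or the same side) killed by a minimality assumption. Second, you never normalize $U_i$ (resp.\ $V_i$) to have the fewest $\cS$-edges (resp.\ $\cT$-edges) in their homotopy classes, as the paper does in its opening sentence; without that normalization, the case of a separating curve with both ends on the top boundary --- which arises when cleaning up the $U_i$, and which your plan of ``using regions adjacent to the top'' does not exclude --- produces no contradiction, whereas in the paper it contradicts the minimal $\cS$-count of $U_1$. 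As it stands, the proposal would stall exactly at these configurations.
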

	
	\begin{proof}
		Let $U_1,U_2,V_1,V_2$  be a $(Y,Q)$-CCE, with reduced relative pictures $P_i$ for
		 $U_i\sim QV_iQ^{-1}$, $i=1,2$.
	 
		First note that we can assume that $Q$ begins with a $\cT$ edge and ends with an $\cS$ edge,
		and that the $U_i$ (resp. $V_i$) are reduced and have the smallest number of $\cS$ 
		(resp. $\cT$) edges in their homotopy classes. 		Write $Q=t^{\epsilon_1}a_1u_1a_2u_2\dots a_\ell s^{\epsilon_2}$ where
		$u_i\in (\cS\cup\cT)^{\pm 1}$, $\epsilon_i=\pm 1$ and
		each $a_i$ is a path (possibly of length $0$) in some component $X_{i'}$ of $X$.
		
A {\em region} $\sigma$ of a (relative) picture $P$ is a component of the complement of the fat vertices and arcs of the picture in its ambient surface.  	In our rectangular pictures $P_i$ , 
	the arcs are preimages of the midpoints of the 
	edges in $\cS\cup\cT$ under the nullhomotopy map from the rectangle into $Y$, so each region $\sigma$ maps into  
	$X_\lambda^+$,
	some component $X_{\lambda}$ of $X$ together with 
	half edges (from those edges in $\cS\cup\cT$ meeting $X_{\lambda}$) adjoined.

If $\delta$ is a simple closed curve in a region $\sigma$, 
then $\delta$ bounds a disk $\Delta$ in the rectangle.  
So $\delta$ is mapped onto a closed curve in $X^+:=X\cup((\cS\cup\cT) \setminus {\rm midpoints})$ that is nullhomotopic in $Y$ -- and hence already nullhomotopic in $X^+$, 
indeed in $X_\lambda^+$ for some $\lambda$, by the Freiheitssatz (Theorem \ref{Freiheitssatz}).  So we may assume that none of the vertices or arcs of the picture are contained in the disk $\Delta$ -- in other words that $\sigma$ is simply connected.  Moreover, taking $\delta=\partial\sigma$ in the above remark, we see that the label on $\partial\sigma$ is nullhomotopic in some $X_\lambda^+$. 

	Suppose that $P_i$ is not connected.	Since every region is simply connected,  there is a region $\sigma$ which meets the boundary of the rectangle 
in more than one connected segment,
		separating $P_i$ into two non--empty sub-pictures.
		
			If $\sigma$ meets one of the four sides of $P_i$ in two disjoint segments, 
		then we can shorten $U_i$  or $V_i$ or $Q$, replacing some sub-path by a path in $X$.  This contradicts the choice of words.

	Next suppose  that $\sigma$ meets the left side of the rectangle in the segment labelled $a_j$, and the top side following immediately after the initial subpath $U_0$ of $U_i$.  Then the top-left corner cut off by $\sigma$ gives a nullhomotopy of $t^{\epsilon_1}a_1u_1\cdots u_{j-1}\gamma U_0^{-1}$ for some path $\gamma$ in $X$ (the label on a path in $\sigma$). 
	(See Figure \ref{sigma1}.)
	 But then 
$$(\gamma U_0^{-1}U_1U_0\gamma^{-1},\gamma U_0^{-1}U_2U_0\gamma^{-1},V_1,V_2)$$
is a $(Y,Q_0)$-CCE where $Q_0:=a_ju_j\cdots a_\ell s^{\epsilon_2}$ is shorter than $Q$ -- again a contradiction.  Similar observations apply when a region $\sigma$ meets either vertical side and either horizontal side, or when it meets both the top and the bottom.
		It follows that $\sigma$ meets $\partial P_i$ in exactly two connected segments, 
		one on the right hand side labelled  $a_j$, the other on the left hand side labelled $a_{j'}$. 
		Call this  an $A$--region at $(j,j')$.

\begin{figure}[h]
	\hskip -0.6cm\includegraphics[scale=0.25] {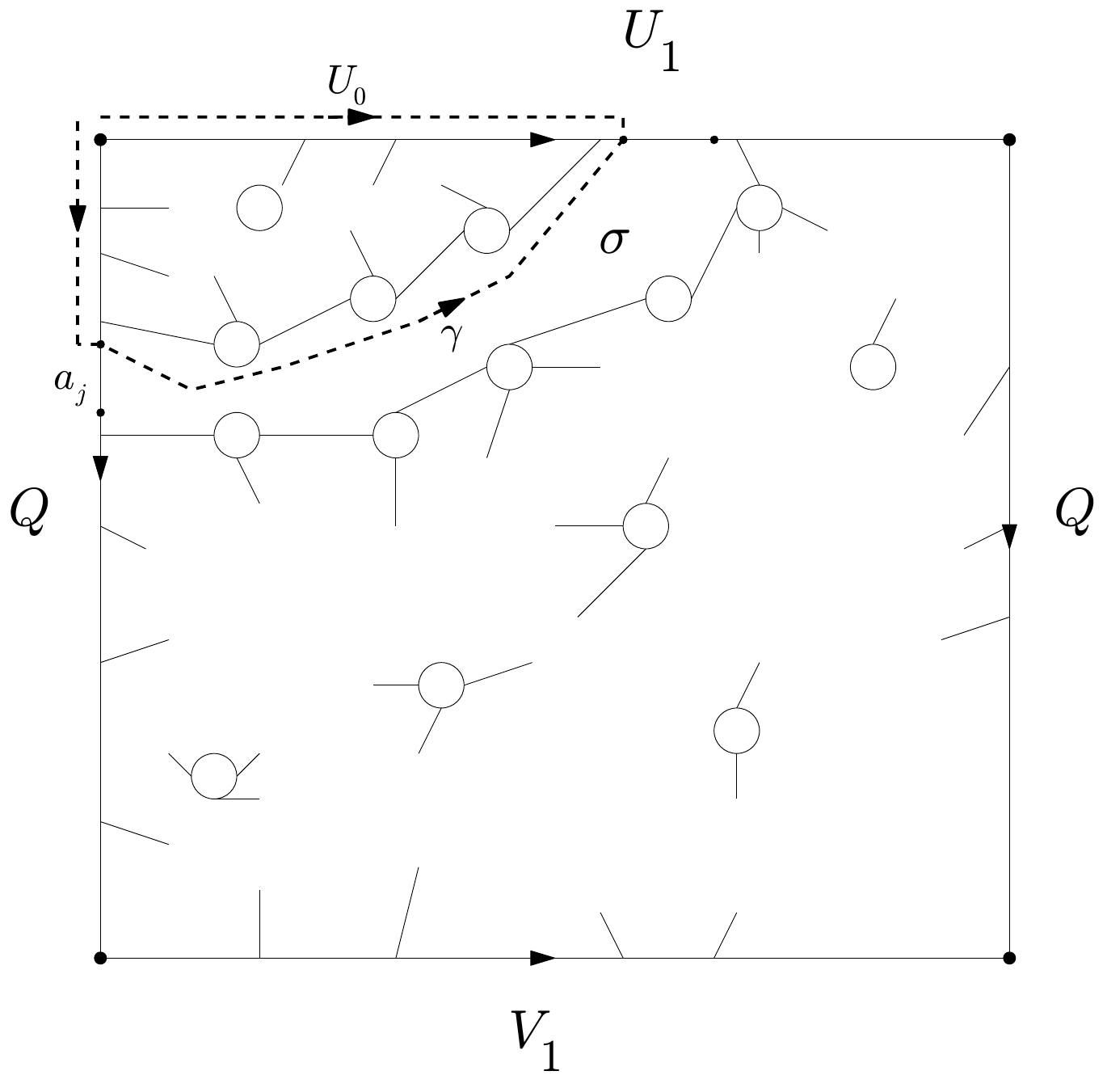}\hskip 0.8cm\includegraphics[scale=0.25]  {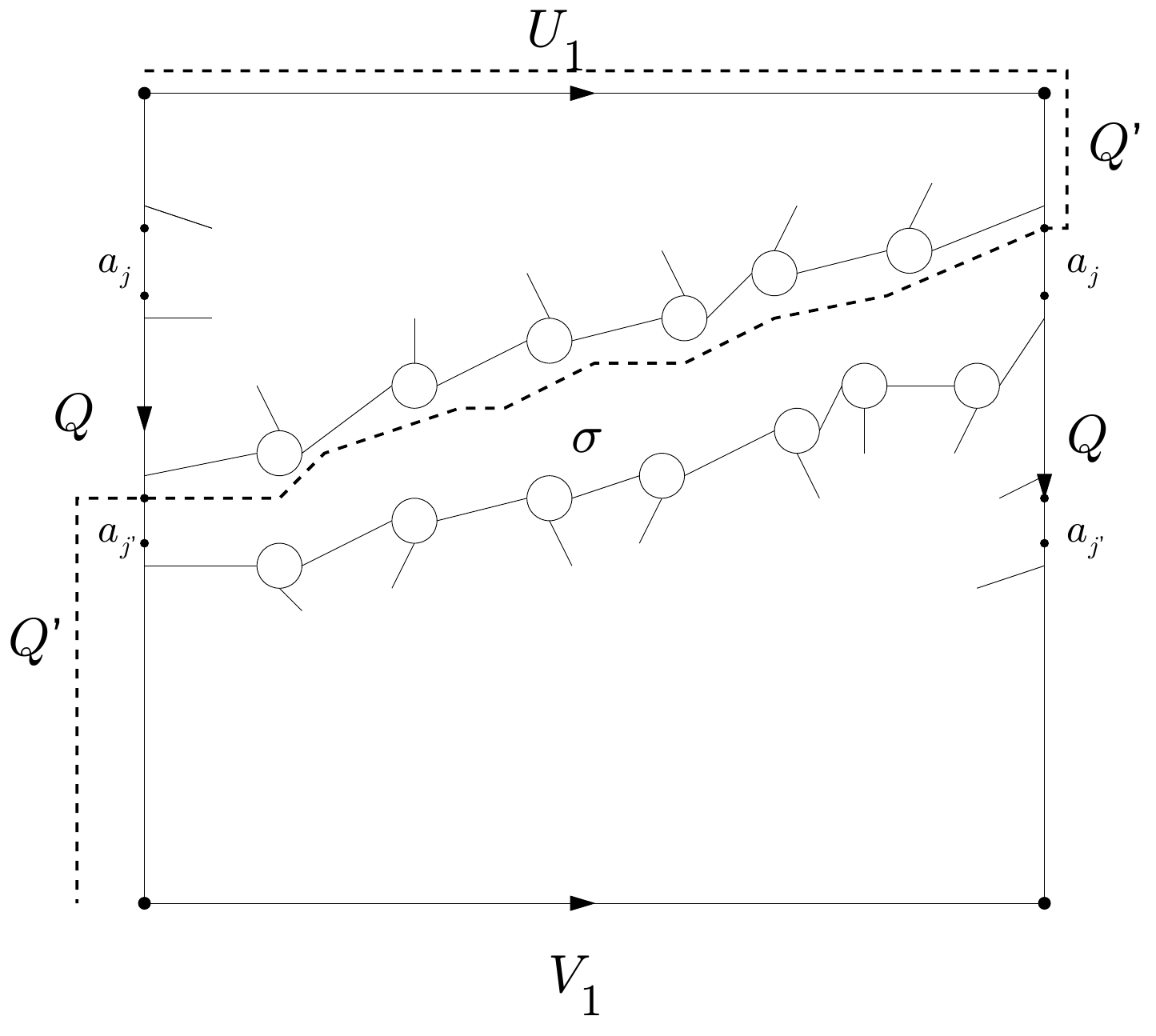}
	\caption{Two possible shortenings }\label{sigma1}
\end{figure}
		 
		\begin{figure}[h]
	\hskip -0.1cm\includegraphics[scale=0.3] {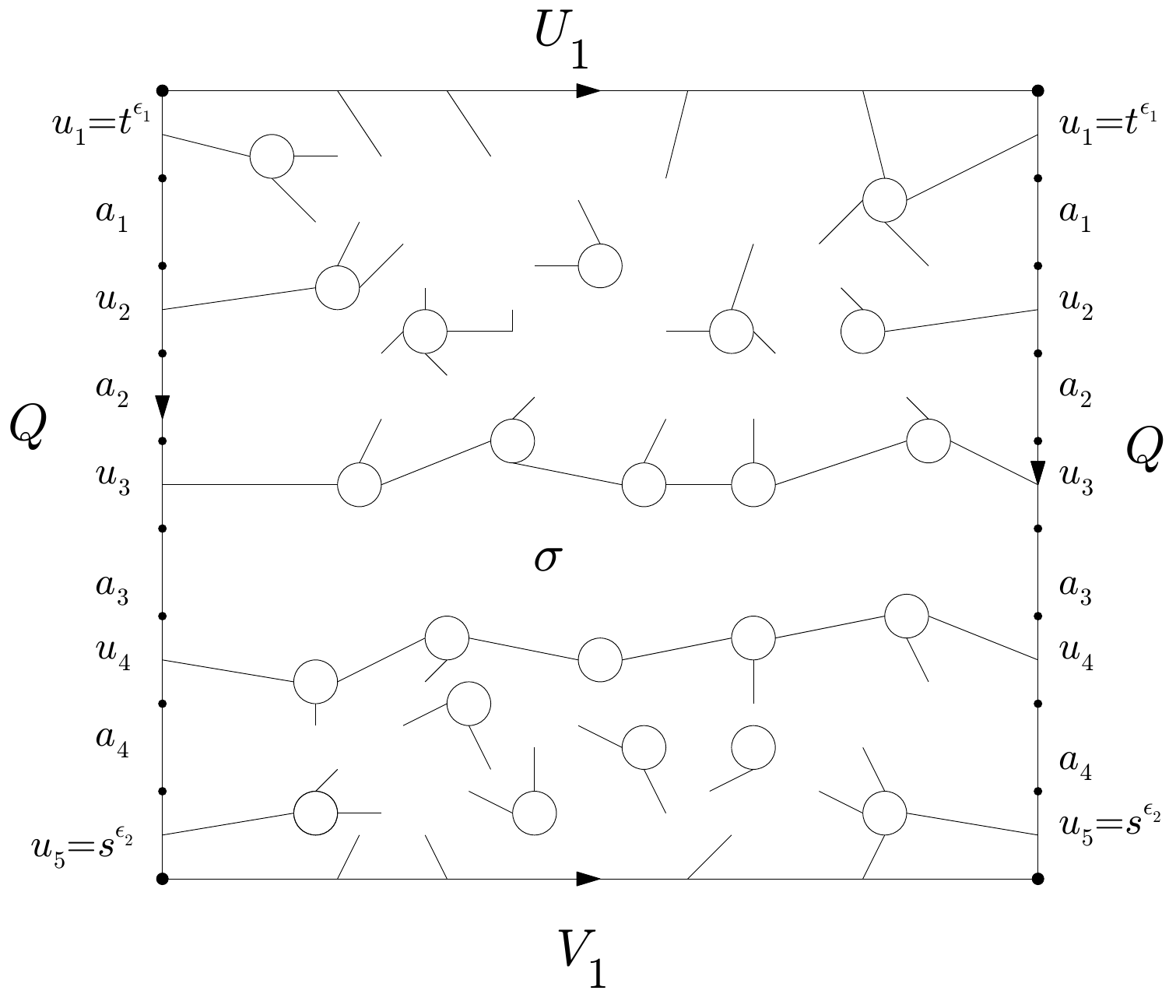}
	\caption{An $A$--region at $(3,3)$}\label{sigma2}
\end{figure}

		If $j<j'$ then $Q$ can be shortened, as there is a $(Y,Q')$-CCE where $Q':= t^{\epsilon_1}a_1u_1\dots a_{j-1}u_{j-1} w a_{j'}u_{j'}\dots a_\ell s^{\epsilon_2}$
		and $w$ is the label on the segments of the boundaries of the $\alpha$ discs in the top boundary of $\sigma$. 
		(See Figure \ref{sigma1}.)
		Similarly it is not possible to have $j>j'$.
		
	The remaining case is that $j=j'$, i.e. $\sigma$ meets both vertical sides in the segment labelled $a_j$.
	(See Figure \ref{sigma2} where $j=3$.)
	
	Consider the closed paths 
$U(p)=U_1U_2^p, V(p)=V_1V_2^p$ for $p>0$: 
	there are $(Y,Q)$-CCEs of the form $(U(p),U(q),V(p),V(q))$ when $p\neq q$
	(here $U(p),U(q)$ generate a rank 2 free subgroup of the subgroup generated by $U_1,U_2$).
	If for infinitely many choices of $(p,q)$, both    pictures have $A$--regions, then
	there is a pair $(p,q)$ such that the pictures $P_1',P_2'$ 
	for $U(p)\sim QV(p)Q^{-1}$ and $U(q)\sim QV(q)Q^{-1}$ contain $A$ regions at the same $a_j$,
	and there is a shorter $(Y,Q')$-CCE, with
	$Q':=t^{\epsilon_1}a_1u_1\dots a_{j-1}u_{j-1}$.
	
	We have thus shown that (for infinitely many choices of $p,q$) there
	are connected pictures for CCEs for $Y$.
	\end{proof}

\smallskip 
Recall that for the relator $R=u_1x_1u_2x_2\dots x_k$, 
	we can assume that the edges of the path $\partial\alpha$ alternate between an edge
	in $\cS\cup\cT$ and an edge  
from a component of $X$. 

Suppose that $P$ is one of the connected pictures in Lemma \ref{Agenerates}, and $\sigma$ is a region of $P$ that meets the boundary in a segment $b$ between two $\cS\cup\cT$-arcs.  Then, as discussed in the proof of Lemma \ref{Agenerates}, $\sigma$ is simply connected and represents a nullhomotopy of the label of $\partial\sigma$ in some component $X_\lambda^+$ of $X^+$.  
Now $b$ is part of $\partial\sigma$, labelled by some path in  $X_\lambda$.  
The rest of $\partial\sigma$ consists of $\cS\cup\cT$-arcs 
(which map to points in $X\cup\cS\cup\cT$), separated by half-edges of $\cS$- and $\cT$-edges, together with segments of the boundaries of $\a$-discs, each of which is labelled by one of the paths $x_j^{\pm1}$ that lie in $X_\lambda$.     
This motivates the following adjustment process.

For each component $X_\lambda$ of $X$, let $\Theta_\lambda$ denote the subgraph $X_\lambda\cap\partial\a$ of $X_\lambda^{(1)}$ and let $\Theta_{\lambda,1},\dots,\Theta_{\lambda,\ell(\lambda)}$ be its connected components.

	For each component $\Theta_{\lambda,i}$,  let $A_{\lambda,i}$ denote the image of $\pi_1(\Theta_{\lambda,i},v_{\lambda,i})$ in 
 $\pi_1(X_\lambda,v_{\lambda,i})$ (for some choice of base-point $v_{\lambda,i}$ in $\Theta_{\lambda,i}$). 
	 Then, letting $X_\lambda^+$  be the component $X_\lambda$ together with half edges meeting it, 
	 as in Lemma \ref{Agenerates},  let   ${X_{\lambda,i}}^+$ be a copy of the connected covering of   $X_\lambda^+$ with fundamental group $A_{\lambda,i}$. 	 
	 Let $X'$ denote the disjoint union of the ${X_{\lambda,i}}^+$, and note that $X'$ comes with a natural projection $\pi:X'\to X^+$ made up of the covering projections ${X_{\lambda,i}}^+\to {X_\lambda}^+$.
	
\begin{lem}[Adjustment $\Theta$]\label{Theta} 
Let $(U_1,U_2,V_1,V_2)$ be a $(Y,Q)$-CCE such that
\begin{itemize}
\item $Y$ has least possible complexity; and
\item the nullhomotopies of $QV_iQ^{-1}U_i^{-1}$ ($i=1,2$) are represented by connected pictures $P_1,P_2$.
\end{itemize}
Let $\pi:X'\to X^+$ be as above. 
Then we can choose, for each edge $u\in\cS\cup\cT\subset X$, a preimage in $X'$ of each of its half-edges, then identify the loose ends of these half-edges to form an edge $u'$.  We may then
 add a $2$-cell $\a'$ to  the complex $X''$ resulting from $X'$ via these identifications, in such a way that $\pi$ extends to a projection $\pi:Y'=X''\cup\a'\to Y$ sending $u'$ to $u$ for each $u$ and $\a'$ to $\a$.  Moreover $c(Y')\le c(Y)$, and the nullhomotopies of $QV_iQ^{-1}U_i^{-1}$ factor through $\pi$.
\end{lem}

\begin{proof}
From the construction, each $\Theta_{\lambda,i}$ in $X$ has an isomorphic copy $\Theta'_{\lambda,i}$ in $X'$, that projects isomorphically onto $\Theta_{\lambda,i}$ via $\pi$.  
 For each $u\in \cS\cup\cT$, let $u_+$ and $u_-$ denote its two half-edges.  
 The half-edge $u_+$ is attached to a vertex $v_+$ of some $\Theta_{\lambda,i}$, and $u_-$ is attached to a vertex $v_-$ of some $\Theta_{\mu,j}$.   Let $u'_+$ and $u'_-$ denote the unique half-edges in $\pi^{-1}(u_+), \pi^{-1}(u_-)\subset X'$ respectively that are connected to the vertices $v'_+:=\pi^{-1}(v_+)$ of $\Theta'_{\lambda,i}$ and $v'_-:=\pi^{-1}(v_-)$ of $\Theta'_{\mu,j}$ respectively.   We identify the loose ends of $u'_+$ and $u'_-$ to form an edge $u'$ and extend the range of $\pi$ by defining $\pi(u'):=u$. Performing this move for all $u\in\cS\cup\cT$ extends $\pi$ to $X'\cup\{u';~u\in\cS\cup\cT\}$.

The boundary cycle $\partial\a$ is an alternating concatenation of $(\cS\cup\cT)^{\pm1}$-edges and edges $x_j$
in $\bigsqcup_{\lambda,i}\Theta_{\lambda,i}$.  This can be (uniquely) lifted to $X''$, where each $u^\epsilon\in(\cS\cup\cT)^{\pm1}$ lifts to ${u'}^\epsilon$ and each $x_j$ from $\Theta_{\lambda,i}$ lifts to $\Theta'_{\lambda,i}$ via the isomorphism $\Theta'_{\lambda,i}\to\Theta_{\lambda,i}$.  Hence we can (uniquely) attach a $2$-cell $\a'$ so as to extend the projection $\pi$ by defining $\pi(\a'):=\a$.   Since $\partial\a'$ has the same length as $\partial\a$ and visits at least as many components of $X'$ as those of $X$ visited by $\partial\a$, it follows that $c(Y')\le c(Y)$, as claimed.

Now consider the relative rectangular picture $P_i$ that represents the nullhomotopy of $QV_iQ^{-1}U_i^{-1}$.  Each $\a$-disc in $P_i$ maps to $\a$ (possibly with a switch of orientation), and this map can be lifted to $\a'$.   
A small regular neighbourhood of each $u$-arc ($u\in\cS\cup\cT$) maps onto the $1$-cell $u$, so this mapping lifts to $u'$.  
Since $P_1$ is connected, each of its regions $\sigma$ is simply-connected, and meets the boundary of the rectangle in at most a single segment 
-- with the rest of $\sigma$ mapping to some $X_\lambda^+$, and meeting precisely
one $\Theta_{\lambda,i}$.  
The homotopy lifting property of coverings then allows us to lift the map on $\sigma$ to ${X_{\lambda,i}}^+$.   Putting all these lifts together gives the required lift of the whole nullhomotopy to $Y'$.
\end{proof}

We aim to prove Theorem \ref{main} by induction on complexity.  In what follows we will assume that we have made the adjustment indicated by Lemma \ref{Theta}; in other words that each $\Theta_\lambda:=X_\lambda\cap\partial\a$ is connected and $\pi_1$-surjects onto $\pi_1(X_\lambda)$.
This will allow us to factor our nullhomotopies through a $\Z$-cover defined by an epimorphism $\Psi:\pi_1(Y)\to\Z$, with a view to creating a lower-complexity CCE.  This applies only for the inductive step of the procedure; first we must address the initial case.

\bigskip
\noindent
Initial Case for the induction:

By hypothesis, each 1-cell $u \in\cS\cup\cT$ occurs in $\partial\a$. 
Moreover, if $u$ separates $Y\setminus\a$ then it must occur at least  
once with each orientation in $\partial\a$. 
Since $Y$ is connected  and $|\cS\cup\cT|>1$, it follows that the least
possible complexity is $c(Y)=k-|\pi_0(X)| = 0$, 
which is realised only when $\vert\cS\cup\cT\vert=k = \vert\pi_0 (X)\vert$,
the components of $X$ and the connecting 1-cells $\cS\cup\cT$ 
	are arranged in a cyclic manner, with each $u\in\cS\cup\cT$ appearing exactly once in $\partial\a$.
So the inductive proof starts with precisely that case.

As each $\cS,\cT$ edge occurs exactly once in $R$, in a reduced picture on any surface over $Y$
there are no $\cS,\cT$ edges joining $\alpha$--discs.
It follows that on a reduced annular picture, for each $\a$ disc, all $\cS$ edges go to the top boundary,
and all $\cT$ edges to the bottom boundary.  
Thus $Q$  is homotopic rel endpoints in $Y$ to $Q_1\cdot Q_2$ with  $Q_1,Q_2$ paths in $X\cup\cS$ and $X\cup\cT$ respectively,
and the Collins property holds.

\bigskip

\noindent
Inductive Step

\smallskip
For the inductive step of the proof we assume that the result holds
for  complexes of smaller complexity  satisfying the conditions for the construction of $Y$ 
at the beginning of section 2. 
Adjustment $\Theta$  (Lemma \ref{Theta}) enables us to construct a suitable $\Z$-cover $p :\wh Y\to Y$
corresponding to an epimorphism $\pi_1(Y) \to\Z$ as follows. 
In the cover  $\wh Y$ let $\wh \cS,\wh \cT$ denote the lifts of $\cS,\cT$.

As $|\pi_0(X)| - |\cS\cup\cT|=\chi(\mathcal G)$ is the Euler characteristic of the graph $\mathcal G$
having a vertex for each 
component of $X$ and an edge for each edge in $\cS\cup\cT$, and this graph is connected, so $\chi(\mathcal G)\le 1$.
Thus either $\chi({\mathcal G}) <0$ (case 1 below) or $\mathcal G$ is a tree (included in case 2 below)
or $|\cS\cup\cT|=|\pi_0(X)|$ and $\mathcal G$ is a circuit with 0, 1 or more trees attached (cases 4, 3, and 2 below respectively).
Thus at
least one of the following is true:
 
\smallskip\noindent
(1) $\vert\cS\cup\cT\vert> \vert\pi_0(X)\vert$: in this case there is an epimorphism 
$\Psi:\pi_1(Y)\to \Z$ that vanishes on the fundamental group of each component of $X$
(only the homology class of the loops matter).
Hence there is a corresponding $\Z$--cover $p:\wh Y\to Y$ such that the pre-image $\wh X$ of $X$ has the form $X\times\Z$.
Let $\wh \a$ be a 2-cell in $p^{-1}(\a)$. 
We claim that $\partial\wh \a$ passes through more than $\vert\pi_0(X)\vert$
components of $\wh X$. 
If not, let $Y'\subset\wh Y$ be the subcomplex consisting of $\wh \a$, the 1--cells of $\wh\cS\cup\wh\cT$
that occur in $\partial \wh\a$, and the components of $\wh X$ 
that meet $\partial\wh \a$.
It follows that  each component of $X$
has precisely one pre-image in $Y'$, and as 
$p$ restricts to an immersion $p|_{Y'} : Y' \to Y$
 it follows that each 1-cell in $\cS\cup\cT$
also has precisely one pre-image in $Y'$. 
It further follows that   
the immersion $p|_{Y'} : Y' \to Y$ is a cellular isomorphism, which is
absurd   as it factors through $\wh Y$.

\smallskip
\noindent
(2) There are two components $X_1,X_2$ of $X$, each of which meets
precisely one of the 1-cells of $\cS\cup\cT$. 
In this case   
$\pi_1(X_1)$ and $\pi_1(X_2)$ are non-trivial,  
finitely generated by the $x_j$ loops after adjustment $\Theta$
 (cf. Lemma \ref{Theta}).

Therefore there exists an epimorphism $\Psi: \pi_1(Y)\to\Z$ which
vanishes on the fundamental group of each component of $X$ except 
possibly for $X_1,X_2$ , but does not vanish on at least one of
these -- without loss of generality on $\pi_1(X_1)$. 
Let $p : \wh Y \to Y$ denote the $\Z$-cover corresponding to $\Psi$.
Let $X_0$ be the component of $X$ that is joined to $X_1$ by $u\in\cS\cup\cT$. 
Then $X_0 \neq X_2$ since $\vert\cS\cup\cT \vert>1$. 
Moreover there is a subpath of $\partial\alpha$ of the form 
$u^{-\epsilon} x_j u^\epsilon$ where   
$x_j$ is a  loop 
in $X_1$ with $\Psi(x_j)\neq 1$. It follows that any
lift of $u^{-\epsilon} x_j u^\epsilon $ to $\wh Y$ joins two distinct components of $p^{-1} (X_0 )$.

\smallskip\noindent
(3) $\vert\cS\cup\cT\vert=\vert\pi_0(X)\vert$ and there is exactly one component $X_1$ of $X$ that meets
only one $u \in\cS\cup\cT$ (joining $X_1$ to $X_0$ say). 
Then as before, $\pi_1(X_1)$ is not trivial.
There is also at least
one $u_0\in\cS\cup\cT$ which is non-separating in $Y \setminus\a$ (In particular, $u_0\neq u$.)
If $u_0$ appears in $\partial\a$ with exponent sum 0, then there is
a $\Z$-cover $p : \wh Y \to  Y$ such that the pre-image of $X$ has the form
$X \times \Z$ --- and we can argue as in the first case above. 
Otherwise there is an epimorphism $\Psi:\pi_1(Y)\to\Z$  which vanishes on the
fundamental group of each component of $X$ other than $X_1$ , but
not on $\pi_1(X_1)$. Then we can argue as in the second case above. 

\smallskip
\noindent (4) $\vert\cS\cup\cT\vert = \vert\pi_0(X)\vert$ 
and the components of $X$ form a cycle $\mathcal C$, 
each component meeting  precisely two of the 1-cells $\cS\cup\cT$. In particular
each of these 1-cells is non-separating in $Y\setminus\a$, and they all
appear with the same exponent-sum (in absolute value) in $\partial\a$.
If the exponent-sum is zero, then there is a $\Z$-cover $p : \wh Y\to  Y$
such that $\wh X$ 
has the form $X \times \Z$, and we can
argue as in the first case. Otherwise, for each component $X_i$ of
$X$ there is at least one
subpath $u_j x_j u_{j+1}$ of $\partial\a$ such that $x_j$ is an edge 
in  $X_i$ and $u_{j+1} \neq u_j^{-1}$. 

If $\pi_1(X_i)$ is trivial for every component $X_i$, then $\pi_1(X\cup\cS\cup\cT)\cong\Z$,
so $\pi_1(Y)$ has 
no non-cyclic subgroups.   

 Suppose without loss of generality that $\pi_1(X_1)\ne 1$.
After adjustment $\Theta$ we have that $\pi_1(X_1)$ is generated by the edges in $\Theta_1$ 
	outside of a maximal tree. 
	In fact the maximal tree  has at most one edge, as there at most two vertices in each $\Theta_j$, the endpoints of the two edges of $\mathcal C$ meeting $\Theta_j$. 
Without loss of generality suppose that $x_1$ is an edge  
in $X_1$ and that, 
 in the subpath $u_1x_1u_2$ of $R$, $u_1$ (resp $u_2$) is an edge in $\cS\cup\cT$ from $X_0$ to $X_1$ (resp $X_1$ to $X_2$).  

 Note that $u_1\ne u_2^{-1}$ since $|\cS\cup\cT|\ge 2$. Hence $X_0\ne X_1\ne X_2$ (but possibly $X_0=X_2$). Note also that the endpoints $v_1$ of $u_1$ and $v_2$ of $u_2$ that lie in $X_1$ are pairwise distinct by the conditions in the Reformulation from \S \ref{refandreds}. The edge $x_1$ of $X_1$ joins $v_1$ to $v_2$ and we can choose a maximal tree $T$ in $\Theta_1$ consisting of the edge $x_1$ and ts endpoints $v_1,v_2$.
Via an epimorphism  $\Psi:\pi_1(X_1)\onto \Z$ there is also a $\Z$-cover $p : \wh Y\to Y$
such that the pre-image of any component $X_i$ of $X$ other than
$X_1$ has the form $X_i\times\Z$, but the pre-image of $X_1$ does not. 

Using the maximal tree $T$ in $\Theta_1$, we see that $\pi_1(X_1,v_1)$ is generated by closed paths of three kinds:
\begin{enumerate}
\item loops $x_j$ at $v_1$;
\item paths $x_1 x_j x_1^{-1}$ where $x_j$ is a loop at $v_2$; and
\item paths $x_1 x_j^\varepsilon$ $(\varepsilon=\pm1)$ where $x_j$ is an edge between $v_1$ and $v_2$.
\end{enumerate} 
Now at least one of these closed paths represents an element of $\pi_1(X_1)\setminus\ker\Psi$.  In the first case $u_1x_ju_1^{-1}$ is a subpath of $\partial\a$, any lift of which to $\wh Y$ joins distinct components of $X_0$.
In the second case $u_2^{-1}x_ju_2$ is a subpath of $\partial\a$, any lift of which to $\wh Y$ joins distinct components of $X_2$.  In the third case $u_1x_1u_2$ and $u_1x_j^{-\varepsilon}u_2$ are subpaths of $(\partial\a)^{\pm1}$, any lifts of which to $\wh Y$ with the same intial point in $p^{-1}(X_0)$ will end on distinct components of $p^{-1}(X_2)$.

In all cases, any lift of $\partial\a$ to the cover $\wh Y$ meets strictly more
than $\vert\pi_0(X)\vert$ components of $p^{-1}(X)$. 

\bigskip 
We now return to
apply the above observations to a $(Y,Q)$-CCE, say $(U_1,U_2,V_1,V_2)$,
where by Remark \ref{passtosubgroup} we can assume that $U_1,U_2$ 
represent elements of the commutator subgroup of $\pi_1(Y,v_0)$,
and so  lie in $\ker\Psi$.  
Hence we can 
 lift the pictures to $\wh Y$.  If
the lifted pictures involve only one lift $\wh\a$ of $\alpha$, 
then there is a $(\ol Y, \wh Q)$-CCE  in $\ol{Y}:=p^{-1}(X\cup\cS\cup\cT)\cup\wh\a$ with smaller complexity, and so the Collins 
property will hold by induction. 

 We split the rest of the proof into two cases:
\begin{enumerate}
\item There is a single preimage $\wh\a$ of $\a$ in $\wh Y$ whose boundary $\partial\wh\a$ contains all the $p^{-1}(\cS)$-cells in the image of the lift $\wh\iota_K:St(U_1,U_2)\to\wh Y$ of the immersion $\iota_K(St(U_1,U_2)\to Y$.  (See \S \ref{first}.)
\item There is no such $\wh\a$. (See \S\S \ref{restrict}, \ref{formW1} and \ref{finalcurtain}.)
\end{enumerate}

\section{First case}\label{first}

In this section we deal with the 
 first case of the inductive step in the proof of Theorem \ref{main}.
The action of $\Z$ on $\wh Y$ by deck-transformations 
gives rise to an indexing $\a_m$ ($m\in\Z$) of the preimages of $\a$ in $\wh{Y}$.  
In this first case
there is a lift $\a_m$ of  $\alpha$ 
in $\wh Y$ such that 
	all lifts of $\cS$ edges in the lift of the Stallings graph $\wh\iota_K(St(U_1,U_2))$
lie in $\partial\a_m$. 

\smallskip

We shall use the following result  
in $\wh Y$.  

\begin{lem}[Iterated Brodski\u\i\ Theorem]\label{itBr} 
	Let $\mathcal Y$ be the ascending union of a sequence (finite or infinite) of $2$-complexes $$Y_0\subset Y_1 \subset\cdots,$$ where each component of $Y_0$ has locally indicable  
(possibly trivial) fundamental group, and $$Y_{n+1}=Y_n\cup E_n\cup\a_n$$ 
	for some non-empty set $E_n$ of $1$-cells and $\a_n$ a $2$-cell.  
	Assume that $\partial\a_n$ is not freely homotopic in $Y_n\cup E_n$ to a proper power, nor to a path in $Y_n$.   
	Suppose that $Q$ is an edge-path in $\mathcal Y$ from a $0$-cell $v_0$ to a $0$-cell $v_1$, 
	and that $H_0<\pi_1(Y_0,v_0)$, 
	$H_1< \pi_1(Y_0,v_1)$ are non-cyclic subgroups such that $H_0=Q H_1 Q^{-1}$ in $\pi_1(\mathcal Y,v_0)$. 
	
	Then $Q$ is homotopic rel endpoints in $\mathcal Y$ to an edge-path $Q_0$ in $Y_0$
	 such that $H_0=Q_0 H_1 Q_0^{-1}$ in $\pi_1(Y_0,v_0)$.
\end{lem}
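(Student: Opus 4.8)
The plan is to peel off the $2$-cells $\alpha_n$ one at a time, from the top down, applying Lemma \ref{Br} at each stage until the conjugating path has been pushed into $Y_0$. Since $Q$ is a finite edge-path it is compact, so $Q\subset Y_N$ for some $N$; and because $H_0,H_1<\pi_1(Y_0)$ the basepoints $v_0,v_1$ already lie in $Y_0\subset Y_N$. Thus every object of interest lives in the finite stage $Y_N$, and the proof reduces to a descent of length $N$ from $Y_N$ down to $Y_0$.

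First I would record the structural facts that make the hypotheses of Lemma \ref{Br} available at each level. Arguing by induction on $n$: each component of $Y_0$ is locally indicable by assumption, and if each component of $Y_n$ is locally indicable then $\pi_1(Y_n\cup E_n)$ is a free product of locally indicable groups (the component groups of $Y_n$ and a free group on the edges of $E_n$), hence locally indicable. As $\partial\alpha_n$ is not homotopic into $Y_n$ and is not a proper power, Theorem \ref{Gli} shows $\pi_1(Y_{n+1})$ is again locally indicable, while the Freiheitssatz (Theorem \ref{Freiheitssatz}) shows the inclusion $\pi_1(Y_n)\hookrightarrow\pi_1(Y_{n+1})$ is injective. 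Since $\pi_1(\mathcal Y)=\varinjlim_n\pi_1(Y_n)$ is the direct limit of these injections, each $\pi_1(Y_n)\hookrightarrow\pi_1(\mathcal Y)$ is injective as well.

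This injectivity gives the base case of the descent: as $Q\subset Y_N$ and $H_0,H_1<\pi_1(Y_N)$, both $H_0$ and $QH_1Q^{-1}$ lie in the image of $\pi_1(Y_N,v_0)$, so injectivity of $\pi_1(Y_N)\hookrightarrow\pi_1(\mathcal Y)$ lets us read the equation $H_0=QH_1Q^{-1}$ in $\pi_1(Y_N,v_0)$ rather than merely in $\pi_1(\mathcal Y,v_0)$. For the inductive step, suppose $Q$ is homotopic rel endpoints in $\mathcal Y$ to a path $Q_n\subset Y_n$ with $H_0=Q_nH_1Q_n^{-1}$ in $\pi_1(Y_n,v_0)$. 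I would apply Lemma \ref{Br} to $Y_n=Y_{n-1}\cup E_{n-1}\cup\alpha_{n-1}$, with $X=Y_{n-1}$, $E=E_{n-1}$ and $\alpha=\alpha_{n-1}$: the condition that $\partial\alpha_{n-1}$ is not homotopic into $Y_{n-1}$ is exactly what Lemma \ref{Br} requires, and since $H_0$ is non-cyclic we invoke its non-cyclic alternative. This produces a path $Q_{n-1}\subset Y_{n-1}$, homotopic to $Q_n$ rel endpoints inside $Y_n\subset\mathcal Y$, with $H_0=Q_{n-1}H_1Q_{n-1}^{-1}$ in $\pi_1(Y_{n-1},v_0)$. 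Composing homotopies and iterating from $n=N$ down to $n=0$ delivers the desired $Q_0\subset Y_0$.

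The hard part is the bookkeeping of the previous two paragraphs rather than the descent itself: one must confirm at every level that the hypotheses of Lemma \ref{Br} genuinely hold --- that each $Y_n$ has locally indicable components and that $\partial\alpha_n$ has free-product length at least two (the only degenerate possibility, in which $\partial\alpha_n$ reduces to a primitive free generator disjoint from $Y_n$, simply yields a free quotient and is harmless) --- and one must justify transporting the conjugacy equation down from the infinite complex $\mathcal Y$ to the finite stage $Y_N$ via the direct-limit injections. Once this framework is in place, the iteration of Lemma \ref{Br} is routine.
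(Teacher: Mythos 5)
Your proposal is correct and takes essentially the same route as the paper's proof: the same structural induction via the Freiheitssatz (Theorem \ref{Freiheitssatz}) and Theorem \ref{Gli} to establish local indicability and $\pi_1$-injectivity at each finite stage, compactness to place $Q$ and the conjugacy equation inside some $Y_N$, and then a downward induction applying Lemma \ref{Br} with $X=Y_{n-1}$, $E=E_{n-1}$, $\alpha=\alpha_{n-1}$. The only differences are cosmetic bookkeeping (e.g.\ your aside on the degenerate relator), so nothing needs to change.
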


\begin{proof}
	By induction on $n$ using the Freiheitssatz, Theorem \ref{Freiheitssatz}, 
	and Theorem \ref{Gli}, each component of each $Y_n$ has locally indicable  (possibly trivial) fundamental group, 
	and each map $Y_n\hookrightarrow Y_{n+1}$ is $\pi_1$-injective.  
	So $\pi_1(\mathcal Y,v_0)$ is the ascending union of the $\pi_1(Y_n,v_0)$.  
	Moreover, $Q$ is a path in $Y_n$ for some $n$.  
	By the Freiheitssatz again, $H_0=Q H_1 Q^{-1}$ in $\pi_1(Y_n,v_0)$.  In particular, the result holds in the case $n=0$.

	Now suppose that $n\ge1$. Putting $\mathcal Y:=Y_n$, $X:=Y_{n-1}$, $E:=E_n$ and $\a:=\a_n$ in Lemma \ref{Br}, it follows that $Q$ is homotopic rel end points in $Y_n$ to an edge-path in $Y_{n-1}$; the result then follows from another induction on $n$. 
\end{proof}

The path $Q$ in the 1--skeleton of $Y$, from the vertex $v_0$
to the vertex $v_1$,  lifts to a path $\wh Q$ in $\wh Y$ from $\wh v_0$ to a vertex $\wh v_1$. 
The  annular pictures $P_i$ for $Q^{-1}U_iQ\sim V_i$  over $Y$ lift to
pictures $\wh{P}_i$ over $\wh{Y}$ for $i=1,2$.

Let $Y'$ be the connected subcomplex of $\wh Y$ 
consisting of all the lifts of $\a$ and of the $1$-cells in $\cS\cup\cT$ that appear in the lifted pictures  
$\wh P_1,\wh P_2$, together with all the components of the pre-image of $X$ in $\wh{Y}$ that meet these  lifted pictures.
This contains:
\begin{enumerate}  
	
	\item[$\bullet$] the lifts $\wh\iota_K(St(K))$  and  $\wh\iota_L(St(L))$ 
	 to $\wh Y$ based at $\wh v_0$ and $\wh v_1$ of the immersed Stallings graphs in $Y$;
	
	\item[$\bullet$] the lift $\wh Q$ of $Q$ from  $\wh v_0$ to $\wh v_1$;

	\item[$\bullet$] finitely many $\Z$-covers and/or copies of the components $X_j$;
	
	\item[$\bullet$] finitely many lifts of each $\cS,\cT$ edge;
	
	\item[$\bullet$] finitely many lifts $\alpha_{m_1} , \dots ,\alpha_{m_j}, \dots,\alpha_{m_N}$ of $\alpha$.
	
\end{enumerate}

Note that the pictures $\wh P_1,\wh P_2$ over $\wh Y$ are a generalised form of relative pictures as 
they are in general relative to several 
lifts of $\alpha$.
Suppose that the  lifts $\alpha_{m_1} , \dots ,\alpha_{m_j}, \dots,\alpha_{m_N}$ of $\a$
appearing in $Y'$ are ordered by their indices.

Note that   $\alpha_0$, the lift of $\alpha$ at the base point $\wh v_0$ of $Y'$, 
which is the base point of $\wh  Y$,
may or may not appear in this list. 
We suppose that there is an index $m=m_\mu$ such that all the $\wh\cS$ edges in 
$\wh \iota_K(St(K))$ lie in the boundary of  $\alpha_{m}$.

Let $Y_0\subset Y'$ be the subcomplex of all the lifts of $X$ components in $Y'$, 
all $\wh\cS$ edges in $\wh \iota_K(St(K))$ and all  $\wh \cT$ edges in $\wh\iota_L(St(L))$
together with all $\wh\cS,\wh\cT$ edges in $\partial\alpha_m$, and the 2-cell $\alpha_m$. 

We wish to apply  Lemma \ref{itBr}
with:
 
\noindent
$H_0:=(\wh\iota_K)_*(\pi_1(St(K),*_K))<\pi_1(Y_0,\wh v_0)$, \\
$H_1:=(\wh\iota_L)_*(\pi_1(St(L),*_L))<\pi_1(Y_0,\wh v_1)$, \\
where $*_K,*_L$ are the base-points of $St(K)$ and $St(L)$.

Also, if $m'\in[m_1,m_N]$ and $m'< m$ (resp. $m'>m$) then there is at least one $\wh\cS$ edge in $\partial\alpha_{m'}$
with index strictly less than (resp. strictly greater than) any index on a $\wh \cS$ edge in $\partial\alpha_m$,
and so does not appear in $\wh\iota_K(St(K))$. 
Suppose that $m=m_\mu$, where $m_1,m_2,\dots,m_\mu,\dots, m_N$ are the 
indices of the lifts of $\alpha$ in $Y'$.
In order to apply   Lemma \ref{itBr},
take:

for $j=1,..,\mu-1$:

$Y_j =Y_{j-1}\cup E_j\cup \alpha_{m_{\mu-j}}$ where $E_j$ is the set of $\wh\cS,\wh\cT$ edges 
in  $\alpha_{m_{\mu-j}}$ not already present in $Y_{j-1}$; 

so $Y_{\mu-1} = Y_{\mu-2}\cup E_{\mu-1}\cup\alpha_{m_1}$
contains $\alpha_{m_1},\dots,\alpha_{m_\mu}$;

$Y_{\mu} =Y_{\mu-1} \cup E_{\mu} \cup \alpha_{m_{\mu+1}}$ where $E_{\mu}$
is the set of $\wh\cS,\wh\cT$ edges in $\alpha_{m_{\mu+1}}$ not already present in $Y_{\mu-1}$

$\vdots$

$Y_{N-1}= Y_{N-2} \cup E_{N-1} \cup \alpha_{m_N}$ where $E_{N-1}$
is the set of $\wh\cS,\wh\cT$ edges in $\alpha_{m_N}$ not already present in $Y_{N-2}$;
thus $Y_{N-1}=Y'$.

At each stage, $\wh\cS$ and $\wh\cT$ edges are added, and the
choice of $m_\mu$ ensures that every time the set of $\wh\cS$ edges added is non--empty.

\smallskip 
Applying Lemma \ref{itBr} gives the conclusion that in $Y_0$, 
the path  $\wh Q$ is homotopic rel. endpoints
to a path $\wh Q_0$  such that   $H_0=\wh Q_0H_1\wh Q_0$ in $\pi_1(Y_0,\wh v_0)$.
But $Y_0$ contains just one lift of $\alpha$, and thus the  Collins property holds
in $Y_0$ by induction on complexity. 
This implies that $\wh Q_0$  
is homotopic rel endpoints in $Y_0$ to $\wh Q_1\cdot \wh Q_2$,
 with $\wh Q_1$ in 
$\wh X\cup\wh \cS$ and $\wh Q_2$ in $\wh X\cup\wh\cT$,  
and thus in $Y$ there is a homotopy between $Q$ and a concatenation $Q_1\cdot Q_2$
with $Q_1$ a path in $X\cup\cS$ and $Q_2$ a path in $X\cup\cT$,
and the Collins property holds in $Y$.

 \section{Restricting the conjugating element}\label{restrict}
   
In this section we begin the second case of the inductive step in the proof of Theorem \ref{main}, as set out in \S \ref{plan}.
There is an epimorphism $\Psi:\pi_1(Y)\to\Z$ and a corresponding $\Z$-cover $p:\wh{Y}\to Y$ such that our immersions $\iota_K:St(U_1,U_2)\to Y$ and $\iota_L:St(V_1,V_2)\to Y$ lift to $\wh{Y}$, as do the pictures $P_1$ and $P_2$.
But the boundary of any given $2$-cell $\wh\a\in p^{-1}(\a)$  does not contain all of the $p^{-1}(\cS)$-arcs in the image of the lift of $\iota_K$.
As in \S \ref{first}, the deck-transformation action of $\Z$ on $\wh Y$ gives rise to an indexing of cells, which we will also refer to as a {\em $\Z$-labelling}.  Thus $p^{-1}(\a)=\{\a_m,~m\in\Z\}$ and $p^{-1}(u)=\{u_m,~m\in\Z\}$ for $u\in\cS\cup\cT$.

Here we will show that under these hypotheses 
we can restrict the choice of conjugating path $Q$ to a single specific path $W_1$.  This restriction will be exploited in the subsequent sections to complete the proof.
 
 In \S \ref{ords} we described how to use a right-ordering on $G:=\pi_1(Y)$ to identify 
 arcs $u_{min}$ and $u_{max}$ incident at any $\a$-disc in a picture, and to define a pre-order on the $\a$-discs in a rectangular picture.  
 For this purpose we choose a right ordering  $<$ that 
 is {\em dominated} by the natural ordering of $\Z$ via the epimorphism 
 $\Psi:\pi_1(Y)\onto\Z$ that defines the $\Z$-cover $p:\wh{Y}\to Y$, 
 in the sense that $\Psi(g)<\Psi(h)\Rightarrow g<h$.  
 We will fix that choice from now on   
 (except that at  a certain point in \S \ref{formW1} we will also consider the opposite ordering $>$).   
 This choice of right ordering has some useful consequences, as follows. 
 
 \begin{itemize}
 	\item If $\a_m$ is one of the lifts of $\a$ to $\wh Y$, $u\in\cS\cup\cT$, and $n,N$ are the least and greatest integers $j$  such that $u_j$ is involved in $\a_m$, then the $u_{min}$ and $u_{max}$ arcs 
 	of $\partial\a$ lift to $u_n,u_N$ arcs respectively of $\partial\a_m$.
 	\item  If $\b_1,\b_2$ are $\a$-discs in a rectangular picture $P$, 
 	$\wh P$ is a lift of $P$ to $\wh Y$ such that $\b_1,\b_2$ lift respectively to an $\a_m$-disc and an $\a_n$-disc in $\wh P$, 
 	with $m<n$, then $\b_1<\b_2$ in the pre-order on $\a$-discs of $P$.
 \end{itemize}

 \medskip\noindent{\bf Definition}:  Let $P$ be a reduced rectangular or annular picture.  
 An $\a$ disc  $\beta$ in $P$ is called an {\em up-down connection} 
 if there are $(\cS\cup\cT)$-arcs connecting it to each of the top and bottom boundaries (of the rectangle or annulus).  
 It is a {\em  $G$-min  (resp.  $G$-max) up-down connection} 
 if for each $s \in \cS$, the  $s_{min}$  (resp. $s_{max}$) arc of $\b$ ends on the top boundary of $P$,
 and for each $t \in \cT$, the  $t_{min}$ (resp.   $t_{max}$)  arc of $\b$ ends on the bottom boundary of $P$.
  It is possible for a $G$-max connection to be at the same time  a $G$-min connection.

 \begin{lem}	\label{surlepont}
 	Let $P$ be a reduced annular relative picture over $\wh Y$, 
 	such that the $\a$-discs of $P$ that are joined by $\cS$-arcs 
 	to the top boundary have two or more distinct $\Z$-labels.  
 	Then $P$ has at least two up-down connections, one $G$-max and one   $G$-min.
 \end{lem}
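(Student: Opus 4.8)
The plan is to produce the $G$-max up-down connection as a carefully chosen extremal $\a$-disc, and then to obtain the $G$-min connection by running the identical argument for the reversed right order $>$ (which is again dominated by $\Psi$ after reversing the $\Z$-grading, and which interchanges $u_{max}\leftrightarrow u_{min}$ and maximal $\leftrightarrow$ minimal discs). So it suffices to establish the existence of one $G$-max up-down connection.

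First I would record the structural facts that tame the annular picture. Because the top boundary of $\P$ is labelled in $\wh X\cup\wh\cS$ and the bottom in $\wh X\cup\wh\cT$, any $\cS$-arc reaching $\partial\P$ reaches the \emph{top} and any $\cT$-arc reaching $\partial\P$ reaches the \emph{bottom}; and since $Z$ involves every edge of $\cS\cup\cT$, each $\a$-disc carries a $u_{max}$ and a $u_{min}$ arc for every $u$. Next I would combine the first bulleted remark of \S\ref{restrict} with the pre-order observation of \S\ref{ords}: the $u_{max}$ (resp. $u_{min}$) arc of a disc sits at the greatest (resp. least) $\Z$-height among its $u$-arcs, and, as $\P$ is reduced, a $u_{max}$ arc of one disc cannot arrive as a $u_{max}$ arc of another (else the two discs cancel). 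A short height computation then yields local monotonicity: a $u_{max}$ arc of a disc at label $m$ can terminate only on the boundary, on a disc of strictly larger label, or on a same-label disc at a \emph{non-extremal} occurrence of $u$; it can never drop to a lower label. The subtle third possibility, where two discs share a $\Z$-label and an arc joins them at occurrences $j\ne j'$ of equal height, is exactly what prevents the naive ``take the globally top disc'' argument from working and is why the hypothesis is required.

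The heart of the matter is to exhibit a single disc all of whose $u_{max}$ arcs escape to $\partial\P$. I would pass to the largest label $M$ appearing in $\P$; by monotonicity the $u_{max}$ arcs of level-$M$ discs never descend, so each goes to the boundary or stays at level $M$. To remove the ambiguity of the right order around the annulus I would cut $\P$ along a height-monotone transversal to a rectangular picture $\P^{rect}$, on which the pre-order of \S\ref{ords} is genuinely single-valued (the core loop lies in $\ker\Psi$, so heights are well defined on the annulus and the cut is consistent), and take $\b$ to be a pre-order-maximal level-$M$ disc. The observation of \S\ref{ords} then forces every $u_{max}$ arc of $\b$ off $\b$ and onto $\partial\P^{rect}$; being $\cS$- resp. $\cT$-arcs, the $s_{max}$ arcs land on the top or on a cut-side, and the $t_{max}$ arcs on the bottom or on a cut-side.

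The main obstacle — and the only point at which the two-distinct-labels assumption is used — is to exclude the cut-sides, i.e.\ to prevent $\b$'s maximal arcs from wrapping across the transversal to other discs. I expect to argue that, since the $\cS$-arcs meeting the top come from discs of at least two distinct $\Z$-labels, the top boundary is traversed at (at least) two distinct heights; as the top loop returns to its starting height, this traps a genuine height-maximum, so a disc whose $s_{max}$ arcs are the highest $\cS$-arcs in $\P$ can only send them upward to the top boundary rather than sideways into another level-$M$ disc. Ruling out the residual same-label non-extremal connections (via Weinbaum's Theorem \ref{Weinbaum}, which forbids repeated heights from coinciding in the way that would be needed) then shows that all $s_{max}$ arcs of $\b$ reach the top and all $t_{max}$ arcs reach the bottom, so $\b$ is a $G$-max up-down connection; applying the same reasoning to $>$ yields a $G$-min up-down connection, completing the proof.
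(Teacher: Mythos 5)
Your local monotonicity facts are correct as far as they go ($u_{max}$ arcs weakly increase the $\Z$-label, and an arc that is $u_{max}$ at both ends forces cancellation in a reduced picture), but the proposal has a genuine gap at exactly the point you flag as ``the main obstacle'': you never actually rule out a cycle of same-label discs joined by $G$-max arcs that winds around the annulus. The pre-order of \S\ref{ords} is only defined on simply connected pictures; on the annulus a chain $\b_1<\b_2<\cdots$ of max-arc connections can close up into an essential loop, because the total ``increase'' around such a loop is measured by the nontrivial core element of the annulus, not by $1$. Weinbaum's Theorem (Theorem \ref{Weinbaum}) does not exclude this: it only rules out a tie in $\ol G$ (connecting element equal to $1$), i.e.\ it makes the $u_{max}$ occurrence in $Z$ unique; it says nothing about two discs with the same $\Z$-label joined at occurrences of different $\ol{G}$-height, which is precisely the wrap-around configuration. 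Likewise your ``trapped height-maximum'' remark about the top boundary visiting two heights does not interact with a level-$M$ cycle of discs at all, so in your sketch the two-distinct-labels hypothesis is never genuinely used. Taking a pre-order-maximal level-$M$ disc in the cut rectangle fails for the same reason: its $u_{max}$ arcs may cross the cut and land on discs that are ``higher'' only after re-basing, and nothing in the proposal prevents this.

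The paper's proof embraces the wrap-around rather than excluding it. Starting from the higher-label disc $\b_1$ it follows $G$-max arcs; if the chain repeats, the repeated segment is an \emph{essential} loop (it cannot bound a disc, since inside a simply connected subpicture the pre-order is honest and strictly increases along the chain), and this loop together with the path back to the top boundary cuts off a disc region $D$ meeting only the top boundary of the annulus. The second chain, following $G$-min arcs from the lower-label disc $\b_0$, has all its $\Z$-labels strictly below those of the first chain, so the two chains are disjoint; the second chain is therefore trapped in $D$, where it can neither repeat (no essential loops in a disc) nor terminate in a $G$-min up-down connection (its $t_{min}$ arcs cannot reach the bottom boundary from inside $D$), contradicting the finiteness of $\P$. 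This is where the hypothesis is load-bearing, and it is also why your symmetry reduction is costly: in the paper the $G$-max and $G$-min connections are produced by one intertwined argument, each chain being used to force the other to terminate. To repair your proposal you would essentially have to reinstate this two-chain mechanism.
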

 
 \begin{proof}
 	Suppose that $\b_0,\b_1$ are two $\a$-discs of $P$ that are attached to the top boundary by $\cS$-arcs, and have $\Z$-labels $n_0,n_1$ respectively with $n_0<n_1$.   
 	If $\b_1$ is not a $G$-max up-down connection, 
 	then for some $u\in\cS\cup\cT$  its $u_{max}$-arc connects $\b_1$ to another $\a$-disc -- say $\b_2$.    
 	Iterating this process gives a  chain 
  $\b_1,\b_2,\cdots$ of $\a$-discs in $P$,  
 	where each $\b_j$ is joined to $\b_{j+1}$ by the  $u_{max}$-arc  of $\b_j$ for some $u\in\cS\cup\cT$.   
 	The chain ends in a  $G$-max up-down connection, 
 	or it contains a repetition -- say $\b_i=\b_j$ with $1\le i<j$. 
 	In the latter case, 
 		assuming that $\b_i=\b_j$ is the first repetition in the chain,
 	the path $\b_i$ -- $\b_{i+1}$ -- $\cdots$ -- $\b_j=\b_i$ 
 	 is simple, and cannot be nullhomotopic in 
 	 the ambient annulus, 
 	else there  would be a 
 	 disc-subpicture containing the path and the pre-ordering in this  disc-picture
 	would give a strictly $G$-increasing chain of $\a$ discs $\b_i<\b_{i+1}<\cdots<\b_j$ which 
 		could not be closed.   
 	 Hence
 	this path
 	 $\b_i$ -- $\b_{i+1}$ -- $\cdots$ -- $\b_j=\b_i$
 		wraps once around the annulus and cuts it 
 	into two smaller annuli.  
 	Then the path $\b_i$ -- $\b_{i-1}$ -- $\cdots$ -- $\b_1$ -- [top boundary] 
	splits the upper small annulus into a disc --  $D$ say.
 	
 	Now repeat this argument in the other direction from $\b_0$ to construct a 
 	chain $\b_0$, $\b_{-1}$, \dots with 
 	$\b_{-j}$ joined to $\b_{-j-1}$ 
 	by the $u_{min}$-arc of $\b_{-j}$ for some $u\in\cS\cup\cT$.  
 	The two chains cannot meet as the $\Z$ labels in the first are all strictly 
 	greater than the $\Z$ labels in the second
  -- by our choice of right-ordering $<$.
 	The second chain is thus entirely contained in the disc $D$, 
 	so cannot contain a repetition or an up-down connection, so continues indefinitely, 
 	contradicting the fact that $P$ has only finitely many $\a$-discs.  
 	Hence the first chain $\b_1,\b_2,\cdots$ above must end in a $G$-max up-down connection.   
 	For similar reasons, the second chain  $\b_0,\b_{-1}\cdots$ must end in a  
 	$G$-min up-down connection.
 \end{proof}

 	\begin{cor}\label{everyedge}
 		If no $2$-cell of $\wh Y$ contains every $\cS$-cell of $\wh{\iota}_K(St(K))$ in its boundary, then there is a rectangular picture $P^{rect}$ with both vertical sides labelled $Q$ that contains a $G$-min up-down connection and a $G$-max up-down connection.
 	\end{cor}
 
 \begin{proof}
 	Since $St(K)$ is a  finite core graph, it contains a cyclically reduced path containing all its edges.
 	Applying $\wh\iota_K$ to such a path gives a path $\wh{U}$ in $\wh{X}\cup\wh{\cS}$.
 	 We can write $\wh U$  as a word in the generators $U_1,U_2$ of $K$, 
 	 then combine copies of the homotopies $Q^{-1}U_iQ\sim V_i$ in  Theorem \ref{2ComplexVersion} to obtain a homotopy  $Q^{-1}\wh{U}Q\sim \wh{V}$ which can be represented by a rectangular picture $P^{rect}$.  
 	 The result then follows from Lemma \ref{surlepont} applied to the annular picture $P^{ann}$ obtained from $P^{rect}$ by identifying its vertical sides and reducing via disc-cancellations. (Note that $\wh U$ remains strongly cyclically reduced under such cancellations.)
 \end{proof}
 
   From the picture $P^{rect}$ of Corollary \ref{everyedge}
 we can excise any  $G$-min or $G$-max up-down connection in the form of a rectangular sub-picture consisting of a single $\a$-disc and its incident arcs, which we call $\G_{min}$ or $\G_{max}$. 
  The top label  $U_{min}$ (resp. $U_{max}$) of this picture is an $\cS$-syllable of $R^{\pm 1}$ containing   the $s_{min}$ edges (resp. $s_{max}$ edges) 
 for all $s\in \cS$, and its bottom label $V_{min}$ (resp. $V_{max}$)
 is a $\cT$-syllable of $R^{\pm 1}$ containing
 the   $t_{min}$ edges (resp. $t_{max}$ edges) 
 for all $t\in \cT$.
  Note that these two special pictures 
 coincide if the corresponding syllables coincide.

 We denote the left side label of $\G_{min}$  
 by $W_1$ and its right side label by $W_2$, so that $W_1V_{min}W_2^{-1}U_{min}^{-1}$ is freely homotopic 
  in $X\cup\cS\cup\cT$ to $R^{\pm1}$.  The exponent depends on the choice of orientation of the up-down connection in $\G_{min}$, so that $W_1,W_2$ are interchangeable via the symmetry $R\leftrightarrow R^{-1}$.
 
 Similarly,  the top and bottom labels of $\G_{max}$ are $U_{max}$ and $V_{max}$, and we denote the left and right side labels by $W_3$ and $W_4$.  Again, $W_3$ and $W_4$ are interchangeable via a change of orientation of the $\a$-disc.

 Note that the labels $U_{min}, U_{max}, V_{min},V_{max}$ on the top and bottom edges being syllables of $R^{\pm1}$ 
 means that the paths   $W_i$ begin with  $\cT$-edges 
 and end with $\cS$-edges and  
 the syllable lengths satisfy  
$$\text{SL}(R) = \text{SL}(W_1) + \text{SL}(W_2) + 2 = \text{SL}(W_3) + \text{SL}(W_4) + 2.$$
 
 \begin{figure}[h] 
 	\hskip -0.5cm\includegraphics[scale=0.32]{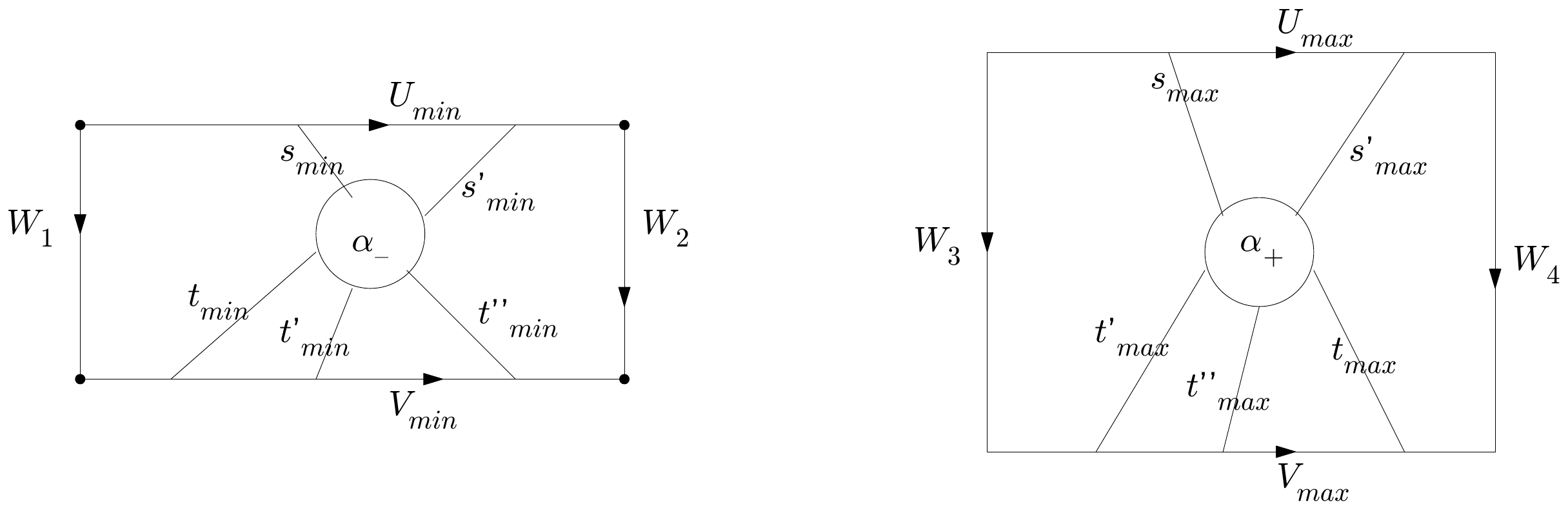}
 	\caption{The special  1--relator  rectangular { pictures} $\G_{min}$ and $\G_{max}$}\label{f1}
 \end{figure}
 
 Without loss of generality, we may assume that $W_1$ has syllable-length less than or equal to those of $W_2,W_3,W_4$:
 \begin{equation}\label{W1least}
 	\text{SL}(W_1)\le\min\{\text{SL}(W_2),\text{SL}(W_3),\text{SL}(W_4)\}.
 \end{equation}

 	Returning to 
 	Theorem \ref{2ComplexVersion}, we can now show that it can be reduced to the special case where $Q$ is any chosen one of the paths $W_1,\dots,W_4$ that label the sides of $\G_{min}$ and $\G_{max}$. 
 	
 	\begin{lem}
 		For each $j\in\{1,2,3,4\}$,  Theorem \ref{2ComplexVersion} is true if and only if it is true for $Q=W_j$.
 	\end{lem}

 \begin{proof}
 		By symmetry it suffices to prove this for $W_1$.
 		Cut  the picture  $P^{ann}$ from Corollary \ref{everyedge} along  a path traversing a  $G$-min  up-down connection along 
 	the side that contains a lift $\wh W_1$ of $W_1$ from $Y$ to $\wh Y$. This 
 	gives a rectangular picture  with 
 	vertical side-labels $\wh W_1$.
 	Moreover $\wh Q$ is homotopic rel. endpoints to  $\gamma \wh W_1\delta$ with $\g$ a path in $\wh{X}\cup\wh{\cS}$ and $\d$ in $\wh{X}\cup\wh{\cT}$.   
 	The statement of  Theorem \ref{2ComplexVersion} is equivalent to the same statement 
 	with $\wh{Q}$ replaced by $\wh W_1$, 
 	so we may assume that, in fact $\wh{Q}=\wh W_1$, and $\wh v_0, \wh v_1$ are its initial and final vertices
 	-- and indeed that the rectangular picture obtained from cutting $P^{ann}$ is the picture $P^{rect}$ that we began with.
 	
 	Projecting back down from $\wh Y$ to $Y$, we are reduced to consideration of the intersection of $\pi_1(X\cup\cS)$ and $W_1\cdot\pi_1(X\cup\cT)\cdot W_1^{-1}$ in $\pi_1(Y)$,  as claimed.
 \end{proof}

\section{The form of reduced rectangular pictures with vertical sides labelled $W_1$}\label{formW1}

In this section we continue the inductive step in the proof of Theorem \ref{main}.  
Having reduced ourselves to consideration of CCEs with $Q=W_1$ by the results in \S \ref{restrict},  
we shall now show that reduced rectangular pictures with both vertical sides labelled $W_1$
have a certain restricted form. 
This will be exploited in \S \ref{finalcurtain} to complete the proof.

\begin{lem}\label{l:3} 
	Assume that the inequality (\ref{W1least}) holds.
	Let $\G$ be a reduced relative rectangular   picture
	over $Y$ with a single $\a$-disc, 
	such that both vertical sides of $\G$ have label $W_1$.  
	Then $\G=\pm\G_{min}$ and $W_2=W_1$.
\end{lem}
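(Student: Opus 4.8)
The plan is to pin down the combinatorics of $\G$ from the single-disc hypothesis, and then match the resulting cut of $R$ against the one that defines $\G_{min}$.

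First I would locate the arcs of the unique $\a$-disc $\b$. Since $\b$ is the only disc it carries all $k$ of the $\cS\cup\cT$-arcs, and a count of $\cS\cup\cT$-edges (the boundary word of a one-disc picture is a single conjugate of $R^{\pm1}$, so it contains exactly $k$ such edges) shows there are no boundary-parallel arcs: every arc joins $\b$ to the rectangle. Lying in the annulus between $\partial\b$ and the boundary, the arcs are non-crossing and induce the same cyclic order on $\partial\b$ as on the rectangle; hence they fall into four consecutive blocks, meeting the left side, the top, the right side and the bottom in turn. The top block consists of $\cS$-arcs and the bottom block of $\cT$-arcs; and because $W_1$ begins with a $\cT$-edge and ends with an $\cS$-edge, both corners adjacent to the top carry $\cT$-edges and both corners adjacent to the bottom carry $\cS$-edges. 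Thus the top block is a \emph{complete} $\cS$-syllable $U$, the bottom block a complete $\cT$-syllable $V$, and each vertical side spells out $W_1$. Reading round the boundary yields the cyclic identity $R\simeq W_1\,V\,W_1^{-1}\,U^{-1}$ in $\pi_1(X\cup\cS\cup\cT)$, to be compared with $R\simeq W_1\,V_{min}\,W_2^{-1}\,U_{min}^{-1}$ coming from $\G_{min}$.

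As $U,V$ are single syllables, $SL(R)=2\,SL(W_1)+2$; comparing with $SL(R)=SL(W_1)+SL(W_2)+2$ forces $SL(W_2)=SL(W_1)$, so $(\ref{W1least})$ holds with equality. This does not yet identify the two cuts: since $W_1$ starts with a $\cT$-edge and ends with an $\cS$-edge we have $SL(W_1)\ge 2$, so $R$ has several $\cS$- and several $\cT$-syllables, and I must determine which ones lie on the top and the bottom. The heart of the proof is to show, via the chosen right order on $\ol G$, that the cut realised by $\b$ is the extremal one, namely $U=U_{min}$ and $V=V_{min}$. Orient $\b$ so that $\partial\b$ reads $R$ (reading $R^{-1}$ produces the mirror picture $-\G$ and is treated identically after interchanging the roles $u_{min}\leftrightarrow u_{max}$). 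Lifting $\G$ to the $\Z$-cover $\wh Y$ of \S\ref{restrict}, the disc $\b$ lifts to some $\a_m$; since the top and bottom loops $U,V$ lie in $\ker\Psi$, the two top corners sit at one $\Z$-level and the two bottom corners at another, the levels differing by $\Psi(W_1)$. Passing if necessary to the opposite right order so that the top is the extreme level, the identification of \S\ref{restrict} (on each $\a_m$ the $u_{max}$ arc is the occurrence of highest index, $u_{min}$ that of lowest) forces every $\cS$-maximal arc of $\b$ to the top and every $\cT$-maximal arc to the bottom. As $U_{min}$ is the unique $\cS$-syllable carrying all the $s_{max}$ edges and $V_{min}$ the unique $\cT$-syllable carrying all the $t_{max}$ edges (uniqueness by Weinbaum, Theorem \ref{Weinbaum}, as in \S\ref{ords}), this gives $U=U_{min}$, $V=V_{min}$. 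The cyclic identity above then coincides with the one defining $\G_{min}$; the block lying between $V_{min}$ and $U_{min}$ is $W_1^{-1}$ on one reading and $W_2^{-1}$ on the other, whence $W_2=W_1$ and $\G=\G_{min}$ (and $\G=-\G_{min}$ in the reversed orientation).

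The planar block-decomposition and the syllable count are routine; the main obstacle is the extremal-cut step. The delicate point is to verify, compatibly for both $\cS$ and $\cT$ — which occupy the mirror roles of top and bottom, matching $W_1$'s initial $\cT$-edge and terminal $\cS$-edge — that no maximal arc can be diverted onto a vertical side, i.e.\ that $\b$ is a $G$-min (or $G$-max) up-down connection rather than an arbitrary one. This is exactly where the right order dominated by $\Psi$, the $\Z$-cover, and the uniqueness of $u_{min},u_{max}$ are indispensable; the minimality $(\ref{W1least})$ enters both in forcing $SL(W_2)=SL(W_1)$ and in guaranteeing that the extremal cut so produced is the one belonging to the short side $W_1$, namely the $\G_{min}$-cut.
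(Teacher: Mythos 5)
Your first paragraph and the syllable count are fine, and they match the setup the paper's proof takes for granted: with a single disc every arc runs from the disc to $\partial\G$, the boundary reads $W_1VW_1^{-1}U^{-1}$, cyclically equal to $R^{\pm1}$, with $U$ a complete $\cS$-syllable and $V$ a complete $\cT$-syllable. The genuine gap is your extremal-cut step. First, the claim that $U,V\in\ker\Psi$ is not available: Lemma \ref{l:3} makes no such assumption, and where it is applied (inside Corollary \ref{W1W2} and Lemma \ref{newlemma:1}) the top and bottom labels are syllables of $R$, not elements of the commutator subgroup, so the corners of the lifted rectangle need not sit at equal $\Z$-levels. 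Second, and fatally, the order/cover machinery of \S\ref{restrict} cannot decide where the arcs of a \emph{single} disc land on the boundary. The facts recorded there --- which lifted occurrences the $u_{min},u_{max}$ arcs become on $\partial\a_m$, and the pre-order on discs --- constrain disc-to-disc arcs and cancellation; for one disc, reducedness is vacuous and every arc ends on $\partial\G$ wherever the (unknown) factorization of the cyclic word $R^{\pm1}$ into $W_1\cdot V\cdot W_1^{-1}\cdot U^{-1}$ puts it. In particular, nothing about the $\Z$-levels of the four corners prevents the $s_{max}$ edge of $R$ from lying inside one of the side factors: the path $W_1$ itself climbs through many levels of $\wh Y$. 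Notice also that your argument uses only the syllable length of the side label and its first and last edge types, never the actual word $W_1$; data that coarse cannot single out the $\G_{min}$-cut. Finally, ``passing to the opposite right order'' is not innocent here: $W_1$, $U_{min}$, $V_{min}$, $\G_{min}$ are all defined relative to the order fixed in \S\ref{ords}, and reversing it interchanges $\G_{min}$ with $\G_{max}$ (and $W_1$ with one of $W_3,W_4$), so at best you would obtain a conclusion about $\pm\G_{max}$ for the reversed data, not the stated one.

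The paper's proof instead exploits exactly the hypothesis you never use: that the side label is the specific word $W_1$. It observes that $W_1$ is \emph{uniquely positioned} as a cyclic subpath of $W_1VW_1^{-1}U^{-1}$ --- a second occurrence would force some initial or terminal segment of $W_1$ to be homotopic rel endpoints in $X\cup\cS\cup\cT$ to its own inverse, which is impossible since $\pi_1(X\cup\cS\cup\cT)$ is locally indicable, hence right orderable and torsion-free --- and the same holds for the cyclic subpath $W_1^{-1}$. Matching the cyclic word $R^{\pm1}\simeq W_1V_{min}W_2^{-1}U_{min}^{-1}$ against the boundary of $\G$ then forces the $W_1$ of $R$ to align with one of the two vertical sides, whence $\G=\pm\G_{min}$; comparing the remaining vertical side with the $W_2^{-1}$ block gives $W_2=W_1$. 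To repair your write-up, replace your step (iii) by this unique-positioning argument; your combinatorial paragraph then feeds directly into the alignment, and the order, the cover and Weinbaum's theorem are not needed at this point.
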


\begin{proof}
	Let $U,V$ be the labels of the top and bottom sides of $\G$ respectively, so that $\partial\G$ is labelled 
	$W_1VW_1^{-1}U^{-1}$.
	
	The cyclic  subpaths	$W_1$   and $W_1^{-1}$ of $W_1VW_1^{-1}U^{-1}$ 
	 are  {\em uniquely positioned}, 
	in the sense that no other cyclic subpath   
	of $W_1VW_1^{-1}U^{-1}$ is homotopic rel endpoints to  $W_1^{\pm1}$  in $X\cup\cS\cup\cT$.
 
		To see this, suppose for example that some subpath $Z$ of $W_1VW_1^{-1}$ -- other than the obvious initial segment -- is homotopic rel endpoints to $W_1$ in $X\cup\cS\cup\cT$.  (The other three possible counterexamples are analogous.)   Then $Z$ cannot be contained in $W_1V$ since the last edge of $W_1$ is an $\cS$-edge and $V$ contains no $\cS$-edges.   Hence some initial segment $Z_0$ of $W^{-1}$ is a terminal segment of $Z$ containing at least one $\cS$ edge, and $Z_0$ is homotopic rel endpoints in $X\cup\cS\cup\cT$ to a terminal segment of $W_1$ of the same $(\cS\cup\cT)$-length. But the terminal segment of $W_1$ in question is $Z_0^{-1}$. 
		
		In particular $Z_0$ must contain an even number of $\cS\cup\cT$ edges, for otherwise the middle such edge would be equal to its own inverse.    Hence the two middle $\cS\cup\cT$ edges in $Z_0$ are inverse to each other, and are separated by a closed path in $X$ which is homotopic rel endpoints to its own inverse (and hence nullhomotopic since $\pi_1(X)$ has no $2$-torsion).  This contradicts the hypothesis that $W_1$ is a cyclic subpath of the strongly cyclically reduced path $R^{\pm1}$.

	It follows that the {cyclic subpath} 
	$W_1$ of $R$ matches up to either the left or the right vertical side of $\G$, 
	and hence that $\G=\pm\G_{min}$, as claimed. 
In particular the side-labels $W_1^{\pm1}$ of $\G$ are equal to those of $\G_{min}$, namely  $W_1$ and $W_2^{-1}$.   Hence $W_1=W_2$ as claimed.
\end{proof}

\begin{lem}\label{l:2}
	Assume that the inequality (\ref{W1least}) holds.
	Let $\G$ be a reduced relative rectangular  picture 
	over $Y$ with more than one $\a$-disc, 
	such that both vertical sides of $\G$ have label $W_1$.    
	Then $\G$ can be decomposed in the form $\G_1+\Delta+\G_2$, 
	where each of $\G_1,\G_2$ is a copy of either  $\pm\G_{min}$ or of $\pm\G_{max}$. 
	Moreover the $\a$-discs $\b_1,\b_2$ of $\G$ given by Lemma \ref{l:1}
	are precisely the $\a$-discs in $\G_1$ and $\G_2$.
	
\end{lem}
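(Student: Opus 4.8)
The strategy is to exploit the pre-order on $\a$-discs induced by the dominated right ordering, via Lemma \ref{l:1}, to locate two ``extremal'' discs $\b_1,\b_2$ that can be excised as copies of $\pm\G_{min}$ or $\pm\G_{max}$. First I would apply Lemma \ref{l:1} to $\G$ (a reduced, relative, simply connected picture with more than one $\a$-disc) to produce two $\a$-discs $\b_1,\b_2$, each joined to the boundary of the rectangle by a consecutive run of arcs containing either all its $u_{min}$ arcs or all its $u_{max}$ arcs, for every $u\in\cS\cup\cT$. The key observation is that, because $\G$ is a rectangle whose two \emph{vertical} sides are both labelled $W_1$ and whose $u_{min}/u_{max}$ data are controlled, such a consecutive boundary-run cannot reach a vertical side: the syllable-length accounting in \eqref{W1least} and the identity $SL(R)=SL(W_1)+SL(W_2)+2$ forces the full $\cS$-syllable (carrying every $s_{max}$, resp. $s_{min}$) to exit through the \emph{top} and the full $\cT$-syllable through the \emph{bottom}. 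Thus $\b_1$ is a $G$-max (or $G$-min) up-down connection, and excising it yields a copy of $\pm\G_{max}$ (resp. $\pm\G_{min}$); the same holds for $\b_2$.

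Next I would show that excising $\b_1$ splits $\G$ as $\G_1+\G'$ and excising $\b_2$ from $\G'$ splits it further, giving the desired $\G=\G_1+\Delta+\G_2$. The point is that the consecutive arc-run isolating $\b_1$, together with the disc itself, cuts a single extremal syllable-block off one end of the rectangle: because the run is \emph{consecutive} and meets only the top-or-bottom boundary (not a vertical side), the boundary of the excised sub-picture is labelled exactly as in $\G_{min}$ or $\G_{max}$, so $\G_1$ is $\pm\G_{min}$ or $\pm\G_{max}$ and the gluing operation $+$ is legitimate (the matching vertical label on the cut is a sub-syllable path beginning with a $\cT$-edge and ending with an $\cS$-edge, compatible with the $W_i$ boundary conventions). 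Applying the same excision at the opposite extremal disc $\b_2$ produces $\G_2$, and the remaining middle piece $\Delta$ absorbs everything else. That $\b_1,\b_2$ are \emph{precisely} the $\a$-discs appearing in $\G_1,\G_2$ follows because each excised sub-picture, being a copy of $\pm\G_{min}$ or $\pm\G_{max}$, contains exactly one $\a$-disc by construction (Figure \ref{f1}).

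\textbf{The main obstacle.} The delicate step is verifying rigorously that the consecutive boundary-run of the extremal disc $\b_1$ cannot reach either vertical side of $\G$. Lemma \ref{l:1} only guarantees a consecutive run to \emph{the boundary}, and in a rectangle the boundary includes the two vertical sides labelled $W_1$. Ruling this out is exactly where the normalisation \eqref{W1least} and the ``uniquely positioned'' argument underlying Lemma \ref{l:3} are needed: if part of the run exited through a vertical side, then the $W_1$ label would have to accommodate a sub-syllable of $R$ carrying an entire extremal set of $u_{min}$ or $u_{max}$ edges, forcing $SL(W_1)$ to be at least as large as the competing $SL(W_2),SL(W_3),SL(W_4)$, and the careful bookkeeping of which edges are extremal (together with Weinbaum's Theorem \ref{Weinbaum} guaranteeing uniqueness of each $u_{min},u_{max}$) contradicts the minimality of $SL(W_1)$. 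Once this is established, the decomposition and the identification of $\b_1,\b_2$ with the discs of $\G_1,\G_2$ are essentially formal.
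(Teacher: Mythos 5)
Your overall skeleton (apply Lemma \ref{l:1}, excise the two distinguished discs as copies of $\pm\G_{min}$ or $\pm\G_{max}$) matches the paper's opening move, but your ``key observation'' --- that the consecutive run of arcs from $\b_1$ to the boundary cannot reach a vertical side, so that the full $\cS$-syllable exits through the top and the full $\cT$-syllable through the bottom --- is false, and is in fact incompatible with the very conclusion you are proving. In the decomposition $\G=\G_1+\Delta+\G_2$ the left side of $\G_1$ \emph{is} the left side of $\G$, labelled $W_1$; since $W_1$ begins with a $\cT$-edge and ends with an $\cS$-edge, arcs genuinely emanate from the vertical sides, and inside $\G_1=\pm\G_{min}$ (or $\pm\G_{max}$) all of them are incident to its single disc $\b_1$. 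So in the correct picture $\b_1$ is joined to the left vertical side by the entire complement of $W_1$-arcs. The paper argues in exactly the opposite direction to you: assuming no arc joins $\b_1$ to the \emph{right} side, it shows the consecutive run of Lemma \ref{l:1} is labelled by a path of syllable length $SL(W_1)+2$ \emph{containing} $W_1$, hence absorbs all left-side arcs; inequality \eqref{W1least} is then invoked only to place the extremal ($u_{min}$ or $u_{max}$) arcs on the top and bottom, after which cutting along the right side of $\b_1$ splits off $\G_1$ at the left end. Under your scenario (run meeting only top and bottom) $\b_1$ would be an up-down connection possibly in the interior of the rectangle, excision would split $\G$ into three pieces with the extremal block in the \emph{middle}, and the arcs leaving the $W_1$-labelled sides would have nowhere to attach.

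Separately, your proof is missing an entire case that no syllable-length accounting disposes of: the case in which each of $\b_1,\b_2$ is joined by arcs to \emph{both} vertical sides. Nothing in \eqref{W1least} prevents extremal arcs from ending on a side labelled $W_1$ (a consecutive subpath of $W_1$ carries both $\cS$- and $\cT$-edges, so it can receive a run containing arcs of both kinds), so this configuration must be excluded by a genuine argument, not by the ``bookkeeping'' you gesture at. The paper devotes the second half of its proof to it: all $\cS$-arcs from the top then attach to one of $\b_1,\b_2$ and all $\cT$-arcs from the bottom to the other; a $G$-minimal disc $\b_3$ is chosen, whose $u_{min}$ arcs cannot end on other discs (reducedness), nor on the top or bottom (the positions of $\b_1,\b_2$ together with the minimality of $SL(W_1)$), hence must end on the two vertical sides; and then the sum $\G+\G_{min}$ is shown to be reduced --- the newly glued disc being strictly smaller than the $G$-minimal disc of $\G$ --- while admitting no second disc of the kind required by Lemma \ref{l:1}, a contradiction. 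Without this step (or a substitute for it) the lemma is not proved, since your argument only ever addresses the configuration in which an extremal disc fails to meet a vertical side.
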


\begin{proof} Let $\b_1,\b_2$ be the $\a$ discs given by Lemma \ref{l:1}.
	Suppose first that no ($\cS\cup\cT$)-arc joins $\b_1$ to the right side of the picture.  Lemma \ref{l:1} gives a sequence of consecutive ($\cS\cup\cT$)-arcs from $\b_1$ to the boundary; these arcs and the parts of the boundary that separate them are labelled by a path $\g_1$ in $Y^{(1)}$ that is a cyclic subpath  of $R^{\pm1}$ containing either the $u_{min}$ edges of $R^{\pm1}$ for all $u\in\cS\cup\cT$ or the $u_{max}$ edges of $R^{\pm1}$ for all $u\in\cS\cup\cT$. It follows from the structure of the single-disc pictures $\G_{min}$ and $\G_{max}$, together with the inequality \eqref{W1least}, that such a path $\g_1$ has  syllable length $\mathrm{SL}(\g_1)\ge\text{SL}(W_1)+2$.  Moreover the first syllable of $\g_1$ is an $\cS$-syllable that contains either the $s_{min}$ edge or the $s_{max}$ edge of $R$ (for each $s\in\cS$), while its last syllable is a $\cT$-syllable that contains either the $t_{min}$ edge or the $t_{max}$ edge of $R$ (for each $t\in\cT$).
		By hypothesis none of these edges is the label of any $\cS\cup\cT$ arc from $\b_1$ to the right hand side of $\G$.   Since $\mathrm{SL}(\g_1)\ge \mathrm{SL}(W_1)+2=\mathrm{SL(U^{-1}W_1V)}$, it follows that $W_1$ is contained in $\g_1$ which in turn is contained in $U^{-1}W_1V$. In particular, $\text{SL}(\g_1)=\text{SL}(W_1)+2$.
	
	It follows that all the ($\cS\cup\cT$)-arcs from the left side of the picture are joined to $\b_1$, as are at least some of the $\cS$-arcs from the top boundary and at least some of the $\cT$-arcs from the bottom boundary. 
	Moreover, either all of the $u_{min}$ arcs at $\b_1$ or all of the $u_{max}$ arcs go to the top or bottom sides of $\G$, by the inequality \eqref{W1least}.     
	Then by cutting $\G$ along the right side of $\b_1$ we can decompose $\G$ as $\G_1+\G'$ for some $\G'$, where  $\G_1$ is a copy of $\pm\G_{min}$ or $\pm\G_{max}$ and $\b_1$ is the $\a$-disc in $\G_1$. 
	It also follows that no ($\cS\cup\cT$)-arc joins $\b_2$ to the left side. 
	Repeating the argument, we have $\G'=\Delta+\G_2$ for some $\D$, where $\G_2$ is a copy of $\pm\G_{min}$ or $\pm\G_{max}$ 
	-- and $\b_2$ is the $\a$-cell in  $\G_2$ as claimed. 
	
	Parallel arguments give the result in all cases where one of $\b_1,\b_2$ fails to be connected to one of the vertical sides by ($\cS\cup\cT$)-arcs.  So we may assume that each of $\b_1,\b_2$ is connected to each of the vertical sides.
	Hence also all $\cS$-arcs from the top of the picture are connected to one of the $\b_j$, and all the $\cT$ arcs on the bottom are connected to the other.

	Let $\b_3$ be a 
minimal $\a$-disc in $\Gamma$ (possibly $\b_1$ or $\b_2$). 
	Its $u_{min}$ arcs do not end on other $\a$ discs for all $u\in \cS\cup\cT$.
	
 Suppose first that $\b_1$ is the chosen minimal $\a$-disc.  
		By hypothesis there is sequence of consecutive arcs joining $\b_1$ to the boundary that is labelled by a subpath $\g_1$ of $R^{\pm 1}$ of syllable length $\ge\text{SL}(W_1)+2$ that contains all of the $s_{min}$ letters of $R^{\pm1}$ (for $s\in\cS$) in its first syllable, and all the $t_{min}$ letters ($t\in\cT$) in its last syllable.   By hypothesis at least some of these arcs must be connected to the right side of the picture, for otherwise $\g_1$ contains $W_1$ and every $\cS\cup\cT$ arc from the left side goes to $\b_1$ . In turn that would imply that no arc joined $\b_2$ to the left side, contrary to hypothesis.   
		
		Similarly, if $\b_2$ is minimal, it is joined to the boundary by a sequence of consecutive arcs labelled by a cyclic subpath $\g_2$ of $R^{\pm1}$ of syllable length $\ge\text{SL}(W_1)+2$.  Then at least some of the $u_{min}$ arcs for $u\in\cS\cup\cT$ meet the right side of $\G$. 
		
		Since $\g_1^{\pm1}$ and $\g_2^{\pm1}$ are disjoint cyclic subpaths -- each of syllable length $\ge \text{SL}(W_1)+2$ -- of the boundary label of $\G$, which has syllable length $2\text{SL}(W_1)+2$, these paths must together  cover the whole of the boundary label of $\G$, except possibly for subpaths of a single syllable (say $\g_{left}$ and $\g_{right}$) in each of the $W_1^{\pm1}$ subpaths that label the vertical sides of $\G$.   Since no single syllable can contain both $\cS$ letters and $\cT$ letters, it follows that if $\b_1\ne\b_3\ne\b_2$ then all the $s_{min}$ arcs (for $s\in\cS$) from $\b_3$ meet one vertical side of $\G$ and all the $t_{min}$ arcs (for $t\in\cT$) must meet the other side.

		In all three cases, at least some of the  $u_{min}$  arcs from the minimal $\a$-disc $\b_3$ meet the right side of $\G$. Hence  forming $\Gamma':= \Gamma +\G_{min}$ gives a picture in which the new right hand $\a$ disc
		is less than the 
minimal disc in $\Gamma$ and so the new picture $\G'$ is reduced. 
		
		Now each of the discs promised in Lemma \ref{l:1} meets the boundary of the picture in a consecutive sequence of arcs whose labels spell a cyclic subpath of $R^{\pm1}$ of length $\text{SL}(W_j)+2$ for $j\in\{1,2,3,4\}$. And this is at least $\text{SL}(W_1)+2$ by inequality \eqref{W1least}. But the paths that label the sequences of arcs from $\b_1$ and $\b_2$ to the boundary of $\G'$ are subpaths of $(U_{min}^{-1}W_1)^{\pm1}$
		and $(W_1V_{min}^{-1})^{\pm1}$ respectively, each of which has syllable length $\text{SL}(W_1)+1$.   And any non-empty sequence of consecutive boundary arcs in  $\G'$ on any other $\a$-disc of $\G$ is labelled by a subpath of the single syllable $\g_{left}$.  Hence the $\a$-disc in $\G'\setminus \G$ is the only one in $\G'$ that can satisfy the properties of the $\b_i$ in Lemma \ref{l:1}.   But Lemma \ref{l:1} says that at least two $\a$-discs in $\G'$ satisfy these properties,
		giving a contradiction.
\end{proof}

\begin{cor}\label{W1W2}
	Assume that the inequality (\ref{W1least}) holds and that $W_1=W_2$.   
	Then  $\pi_1(X\cup\cS,v_0)\cap W_1\pi_1(X\cup\cT,v_1)W_1^{-1}$ is cyclic.
\end{cor}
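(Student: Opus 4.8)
The plan is to recast the intersection $I:=\pi_1(X\cup\cS,v_0)\cap W_1\,\pi_1(X\cup\cT,v_1)\,W_1^{-1}$ as a group of pictures and then read off its structure from Lemmas \ref{l:3} and \ref{l:2}. Every element of $I$ is the top label of a reduced rectangular relative picture over $Y$ whose two vertical sides both carry the label $W_1$, and conversely any such picture has top label in $I$. Under the gluing operation $+$ these pictures form a group, with the empty picture as identity and $-\G$ as inverse, and sending a picture to its top label is a homomorphism onto $I$. This homomorphism is injective: a reduced picture with sides $W_1$ is determined up to homotopy by its top label, since the bottom label is then pinned down in $\pi_1(X\cup\cT)$ by the Freiheitssatz (Theorem \ref{Freiheitssatz}). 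So it suffices to prove that this group of pictures is cyclic.

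Next I would examine a reduced picture $\G$ with sides $W_1$ and at least one $\a$-disc. If $\G$ has a single disc, Lemma \ref{l:3} gives $\G=\pm\G_{min}$. If $\G$ has more than one disc, Lemma \ref{l:2} writes $\G=\G_1+\Delta+\G_2$ with each of $\G_1,\G_2$ a copy of $\pm\G_{min}$ or $\pm\G_{max}$. The crucial step is to eliminate $\G_{max}$. Since $W_1=W_2$, the syllable-length identity $SL(R)=SL(W_1)+SL(W_2)+2=SL(W_3)+SL(W_4)+2$ together with the inequality (\ref{W1least}) forces $SL(W_3)=SL(W_4)=SL(W_1)$. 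For $\G_1$ to agree with the left side $W_1$ of $\G$, a $\G_{max}$-piece would require $W_1$ to coincide, up to homotopy in $X\cup\cS\cup\cT$, with $W_3$ or $W_4$. But $W_3,W_4$ are cyclic subpaths of $R$ cut at the syllables $U_{max},V_{max}$, whereas $W_1$ is cut at $U_{min},V_{min}$; the unique positioning of $W_1$ as a cyclic subpath of $R$, established in the proof of Lemma \ref{l:3}, rules out any such coincidence at a different cut. Hence $\G_1,\G_2\in\{\pm\G_{min}\}$, after which the middle block $\Delta$ again has both sides $W_1$ and the same analysis applies to it by induction on the number of discs.

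Carrying this through, every disc of $\G$ lies in a $\pm\G_{min}$-piece; reducedness forbids an adjacent $+\G_{min}$ and $-\G_{min}$, since they would cancel, so all pieces share one orientation and $\G=\pm N\cdot\G_{min}$ with top label $U_{min}^{\pm N}$. Thus each picture with an $\a$-disc represents a power of $[U_{min}]$. To finish I would dispose of the disc-free pictures: such a picture represents an element of $\pi_1(X\cup\cS)\cap W_1\pi_1(X\cup\cT)W_1^{-1}$ computed inside the free product $\pi_1(X\cup\cS\cup\cT)$, and because $W_1$ begins with a $\cT$-edge, ends with an $\cS$-edge and is strongly reduced, the free-product normal form collapses this intersection to the trivial group; by injectivity the only disc-free picture is the empty one. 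Therefore $I=\langle[U_{min}]\rangle$ is cyclic, as required.

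I expect the main obstacle to be the elimination of $\G_{max}$: making the unique-positioning argument fully rigorous (carefully matching the endpoints of the candidate subpaths of $R$ and invoking the rigidity behind Weinbaum's Theorem \ref{Weinbaum} and the fact that $Z$ is not a proper power), and verifying that the decomposition of Lemma \ref{l:2} genuinely iterates on the middle block $\Delta$ once its flanking pieces are known to be $\pm\G_{min}$. The subsidiary claim that the disc-free intersection is trivial is more routine, but still needs the free-product normal-form computation to be written out so as to confirm that no disc-free picture contributes a generator outside $\langle[U_{min}]\rangle$.
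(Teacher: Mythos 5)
Your proof is correct and follows essentially the same route as the paper: induct on the number of $\a$-discs, using Lemma \ref{l:3} as the base case and Lemma \ref{l:2} plus the unique positioning of the cyclic subpaths $W_1,W_1^{-1}$ in $R$ (established in the proof of Lemma \ref{l:3}) to force both flanking pieces to be $\pm\G_{min}$, with reducedness making the signs coherent so that the picture is $\pm n\G_{min}$ and the intersection is generated by $[U_{min}]$. Your extra scaffolding (the group-of-pictures framing with the injectivity claim, the syllable-length computation $SL(W_3)=SL(W_4)=SL(W_1)$, and the explicit disc-free case) is not needed for the paper's argument but does no harm, and the disc-free analysis in fact fills in a step the paper leaves implicit.
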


\begin{proof}
	$\G_{min}$ is a reduced relative  picture 
	with vertical side  labels  $W_1$.  
	We show by induction on the number $n$ of $\a$-discs that any reduced relative  picture  
	$\D$ with $n$ $\a$-discs and vertical side  labels $W_1$ has the form $\pm n\G_{min}$ (up to boundary surgery), where  
	$$n\G_{min}:=\G_{min}+\cdots+\G_{min}~~(n~\mathrm{terms}).$$
	Thus $\pi_1(X\cup\cS,v_0)\cap W_1\pi_1(X\cup\cT,v_1)W_1^{-1}$ is generated by the top label $U_{min}$ of $\G_{min}$.
	
	If $n=1$ then the result follows from Lemma \ref{l:3}, so assume that $n>1$.  
	By Lemma \ref{l:2} we can write  $\D  = \G_1+\D'+\G_2$ 
	and as pointed out in the proof of Lemma \ref{l:3}, the {cyclic subpaths} 
	$W_1$ and $W_1^{-1}$ of $R$ are uniquely positioned, 
	from which it follows that each of $\G_1,\G_2$ is  $\pm\G_{min}$ and  
	$\G_1+\D'=\pm (n-1)\G_{min}$ by the inductive hypothesis.   
	Since $\D$ is reduced, it follows that $\D=\pm n\G_{min}$, as claimed.
\end{proof}

\begin{lem}\label{newlemma:1} 
	Assume that inequality \eqref{W1least} holds and $W_1\neq W_2$. 
	Let $\Gamma$ be a  reduced relative annular picture over $\wh Y$ with 
	two or more $\Z$ labels on the $\a$-discs joined to the top. 
	Then   $W_3=W_1$ or $W_4=W_1$,  and any $G$-min up-down connection in $\G$ is joined 
	by all the $\wh\cS\cup\wh\cT$-arcs in its $\wh{W}_1$ side to a $G$-max up-down connection.

\end{lem}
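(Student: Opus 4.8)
The plan is to analyse a single $G$-min up-down connection and determine, via Lemma \ref{l:2}, where the arcs on its $\wh W_1$ side must go. By Lemma \ref{surlepont} (whose hypothesis holds precisely because the $\a$-discs joined to the top carry two or more $\Z$-labels), $\G$ contains at least one $G$-min up-down connection $\b$, and also at least one $G$-max up-down connection. First I would cut $\G$ open along a path running through $\b$ transverse to the arcs on its $\wh W_1$ side, producing a reduced rectangular picture $\G^{rect}$ whose two vertical sides are both labelled $\wh W_1$; by construction the arcs meeting the right-hand side of $\G^{rect}$ are exactly the far ends of the $\wh W_1$-side arcs of $\b$. Since $\G$ also possesses a $G$-max connection, $\G^{rect}$ has more than one $\a$-disc.

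Next I would project down to $Y$ and apply Lemma \ref{l:2} to write $\G^{rect}=\G_1+\D+\G_2$, with each of $\G_1,\G_2$ a copy of $\pm\G_{min}$ or $\pm\G_{max}$ and the disc of $\G_1$ equal to $\b$. Comparing vertical side-labels and using the standing hypothesis $W_1\ne W_2$ pins down $\G_1=\G_{min}$ (left side $W_1$, right side $W_2$). The right-hand side of $\G^{rect}$, hence the right-hand side of $\G_2$, is labelled $W_1$; matching this against the four possibilities for $\G_2$ and again invoking $W_1\ne W_2$ leaves only $\G_2\in\{-\G_{min},\ \G_{max}\ (\text{with }W_4=W_1),\ -\G_{max}\ (\text{with }W_3=W_1)\}$. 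Writing $\b'$ for the disc of $\G_2$, the identification of the right boundary of $\G^{rect}$ shows that \emph{all} of the $\wh W_1$-side arcs of $\b$ run to $\b'$.

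It then remains to exclude the case $\G_2=-\G_{min}$, and this is the crux. In that case $\b'$ would be a second $G$-min up-down connection whose own $\wh W_1$ side is exactly the shared family of arcs joining it to $\b$. Here I would invoke the $\Psi$-dominated right ordering fixed in \S\ref{restrict}: because $\b$ is $G$-min all of its $u_{min}$ arcs reach the boundary, so the extremal-lift facts recorded in \S\ref{restrict} force the disc lying across the $\wh W_1$ side to have strictly larger $\Z$-label, giving $\Z(\b')>\Z(\b)$ with a sign determined purely by the local form of $W_1$. Applying the identical local analysis to the $G$-min connection $\b'$, whose $\wh W_1$ side consists of the same arcs read from the opposite end, yields $\Z(\b)>\Z(\b')$, a contradiction. (Equivalently, in such a symmetric configuration a shared $u_{max}$ arc of the two $G$-min discs would be matched $u_{max}$-to-$u_{max}$, violating reducedness by the cancellation observation of \S\ref{ords}.) Hence $\G_2=\pm\G_{max}$, which simultaneously yields $W_3=W_1$ or $W_4=W_1$ and exhibits $\b'$ as a $G$-max up-down connection to which every $\wh W_1$-side arc of $\b$ runs. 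As $\b$ was an arbitrary $G$-min up-down connection, both assertions follow.

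I expect the step establishing the fixed, nonzero sign of the $\Z$-label change across the $\wh W_1$ side of a $G$-min connection to be the main obstacle, since it is precisely what rules out the symmetric $-\G_{min}$ configuration; the remainder is bookkeeping with side-labels and syllable lengths under the inequality \eqref{W1least}.
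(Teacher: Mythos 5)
Your skeleton coincides with the paper's own proof: use Lemma \ref{surlepont} to find a $G$-min up-down connection $\b$, cut the annulus along its $\wh W_1$ side to get a reduced rectangular picture with both vertical labels $\wh W_1$, apply Lemma \ref{l:2} to write it as $\G_{min}+\D+\G_2$ with the disc of the left factor equal to $\b$, eliminate $\G_2=+\G_{min}$ via $W_1\neq W_2$, and recover the conclusion by regluing (all right-side arcs of the rectangle end on the disc of $\G_2$, so after regluing every $\wh W_1$-side arc of $\b$ runs to that disc). The genuine defect is your \emph{primary} argument for excluding $\G_2=-\G_{min}$. The claimed strict $\Z$-label increase across the $\wh W_1$ side of a $G$-min connection is unsupported, and it is false in exactly the configuration you are trying to rule out: if $\G_2=-\G_{min}$, then $\b'$ is a mirror copy of $\b$ glued to it along arcs that join \emph{identical} occurrences of edges of $\cS\cup\cT$ in $\partial\a$, so $\b$ and $\b'$ are lifts of $\a$ with the same index, i.e.\ they carry the \emph{same} $\Z$-label. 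Nothing recorded in \S\ref{restrict} forces a label change: the dominated ordering converts $\Z$-label inequalities into pre-order inequalities, not conversely, and the fact that the arcs on the $\wh W_1$ side are not $u_{min}$ arcs of $\b$ says nothing about the neighbour's $\Z$-label. So applying your ``identical local analysis from the other end'' yields no pair of opposed strict inequalities, and the intended contradiction evaporates.

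The correct reason --- which is the paper's one-line argument, and which you reach only parenthetically --- is reducedness, but even your parenthetical needs repair: you cannot rely on ``a shared $u_{max}$ arc matched $u_{max}$-to-$u_{max}$,'' because the definition of a $G$-min up-down connection constrains only where the $u_{min}$ arcs end, and the occurrences of $\cS\cup\cT$ edges spanned by $W_1$ need not include any $u_{max}$ occurrence at all, so such an arc may simply not exist on the $\wh W_1$ side. What does work: since both vertical sides of the cut-open rectangle carry literally the same label $\wh W_1$, gluing the left side of $\G_{min}$ to the right side of $-\G_{min}$ matches every arc at the same occurrence of $\partial\a$ with opposite orientations and matches the intervening $X$-segments, so $\b$ and $\b'$ form a cancelling pair, contradicting the hypothesis that $\G$ is reduced --- no appeal to the ordering or to extremal arcs is needed. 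With that substitution for your crux step, your argument becomes essentially the paper's proof; the remaining bookkeeping with side labels under \eqref{W1least}, and the identification of $\b'$ as a $G$-max up-down connection joined to $\b$ by all the $\wh\cS\cup\wh\cT$-arcs in its $\wh W_1$ side, is correct as you state it.
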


\begin{proof}
	Lemma \ref{surlepont}
says there is a $G$-min up-down connection $\b$ in $\G$.
	Splitting $\G$ along the $\wh W_1$ side of this gives a reduced rectangular picture $\Gamma^{rect}$ 
	for $U'\sim\wh W_1V'\wh W_1^{-1}$ for 
	(strongly after strong reduction) reduced paths $U'$ in $\wh X\cup\wh\cS$ and $V'$ in $\wh X\cup\wh\cT$.
 By Lemma \ref{l:2}, this picture has the form $\wh{\G}_1 + \wh{\Gamma}'+\wh{\G}_2$ where each of 
$\wh{\G}_i$ $i=1,2$ is a lift of $\pm\G_{min}$ or of $\pm\G_{max}$. 
 Indeed by construction the $\a$-disc at the left of $\Gamma^{rect}$
 (after perhaps replacing $\Gamma^{rect}$ by $-\Gamma^{rect}$) 
  is a $G$-min up-down connection, so necessarily $\wh{\G}_1$ is a lift of $+\G_{min}$.
 Now $\wh\G_2$ cannot be a lift of $+\G_{min}$ (since $W_1\ne W_2$), nor of $-\G_{min}$ (since $\Gamma$ is reduced). 
Hence $\wh{\G}_2$ is a lift of $\pm\G_{max}$, and one of $W_3$ or $W_4$ is equal to $W_1$ as claimed.  
 Furthermore, since $\G$ can be recovered from $\G^{rect}$ by identification of its vertical sides, 
 the $\a$-disc in $\wh{\G}_1$ is joined in $\G$ to the $\a$-disc in $\wh{\G}_2$ 
 by all the $(\wh\cS\cup\wh\cT)$-arcs in its $\wh W_1$ side.  
 But the former is the $G$-min up-down connection under consideration, 
 and the latter is a $G$-max up-down connection. The result follows.
\end{proof}

In particular, at least one of $W_3,W_4$ also has least syllable length among the $W_j$. 
Applying Lemma \ref{W1W2} again with respect to the opposite right order $>$ on $G$, 
we may also assume that $W_3\ne W_4$.  Without loss of generality
 (replacing $\G_{max}$ by $-\G_{max}$ if necessary)  we may assume that $W_4=W_1$.   
 Finally, as noted in the proof of Lemma \ref{newlemma:1}, $\G_{max}\ne\pm\G_{min}$.  
 We may summarize all these working hypotheses as follows:
\begin{equation}\label{W1leastplus}
	\text{SL}(W_1)\le\min\{\text{SL}(W_i)\}\, ,\\ \G_{min}\neq\pm\G_{max},\,
	W_3\neq W_4=W_1\neq W_2
\end{equation}

\begin{rmk}\label{SminReduced}
 Assume the hypotheses \eqref{W1leastplus}.
Let $\b,\b'$ denote respectively the $G$-min up-down connection in Lemma \ref{newlemma:1}
and its neighbouring $G$-max up-down connection.  Consider the part of the top label of $\G$ consisting of the  $\cS$-arcs
connected to $\b$ and $\b'$.  
This is a strongly reduced path containing, for each $s\in\cS$, the $s_{min}$ edge of $\partial\b$ and the $s_{max}$ edge of $\partial\b'$.

It follows that the $s$-edges in $U_{max}\cdot U_{min}$ corresponding to the $s_{max}$ edge of $\partial\b'$ (in $U_{max}$) and the $s$-{min}-edge of $\partial\b$ (in $U_{min}$) survive after strong reduction.

Hence also, whenever in a reduced rectangular or annular picture  a $G$-max up-down connection $\b'$ is
joined to a $G$-min up-down connection $\b$ by all of the $\cS\cup\cT$ arcs in $W_1$, 
after strong reduction the $s_{min}$ arcs from $\b$ and the $s_{max}$ arcs from $\b'$ still go to the top boundary.
\end{rmk}

\section{Completion of proof of Theorem \ref{main}}\label{finalcurtain}

In this final section we complete the proof of Theorem \ref{main}.  By the results in previous sections we are reduced to the case where $Q=W_1$.
Let us suppose that the intersection 
of  $W_1\cdot\pi_1(X\cup\cT,v_1)\cdot W_1^{-1}$ and $\pi_1(X\cup\cS,v_0)$ in $\pi_1(Y,v_0)$ is not cyclic.

By the discussions in \S \ref{formW1}  we may assume the set of hypotheses \eqref{W1leastplus}.
It follows that any reduced rectangular picture $P$ with vertical sides labelled $W_1$   has the form 
\begin{equation}\label{W4form}
\pm(\G_{min}+\D+\G_{max})
\end{equation}
for some $\D$.

Now if $P_1,P_2$ are any two reduced rectangular pictures of the form   $P_j=\G_{min}+\D_j+\G_{max}$, 
then $P_1+P_2$ is also reduced, and by Remark \ref{SminReduced} after strong reduction of the boundary, 
 for each $s\in\cS$
the $s_{max}$ arc on the right hand  $\G_{max}$ disc of $P_1$ still ends on the top, 
as does the $s_{min}$ arc on the left hand $\G_{min}$ disc of $P_2$.
Writing  
 $\ell(P)$ for the number of $\cS$ arcs meeting the top of $P$ between the   left-most $s_{min}$ edge ($s\in\cS$) of
the left hand  $\G_{min}$ disc of $P$ and 
the  right-most  $s_{max}$ edge ($s\in\cS$) of the right hand  $\G_{max}$ disc of $P$,
we have that 
we have $\ell(P_1+P_2) \ge \ell(P_1) + \ell(P_2)$.

We may choose a $(Y,W_1)$-CCE $(U_1,U_2,V_1,V_2)$ and reduced rectangular pictures $P_j$ representing the nullhomotopies $U_j\sim W_1V_jW_1^{-1}$ ($j=1,2$) such that 
\begin{itemize} 
\item $\ell(P_1)$ is smallest possible for $U_1\in K\setminus \{1\}$; and
\item $\ell(P_2)$ is smallest possible for $U_2\in K\setminus\<U_1\>$.
\end{itemize}

Now consider a reduced rectangular picture $P_3$ with strongly reduced top label $U_3$ that is obtained from $P_1-P_2$ by the processes of $\a$-disc cancellation and boundary surgery.  
As before,  $P_3$ has the form  $\pm(\G_{min}+\G_3+\G_{max})$. 
Since  $U_3=U_1U_2^{-1}\notin \<U_1\>$ we have $$\ell(P_3)\ge \ell(P_2)\ge\ell(P_1)>0$$ by the choice of $P_1$ and $P_2$.
But one of $P_3+P_2$, $-P_3+P_1$, after strong reduction on the boundary, 
is a reduced rectangular picture with top label $U_1$ or $U_2$, giving a contradictory inequality 
$\ell(P_1)\ge \ell(P_3)+\ell(P_2)$ or $\ell(P_2)\ge\ell(P_3)+\ell(P_1)$ respectively.  
This contradiction completes the proof.

\end{document}